\begin{document}

\def\C{{\mathbb C}}
\def\Z{{\mathbb Z}}
\def\N{{\mathcal N}}
\def\R{{\mathbb R}}
\def\T{{\mathbb T}}
\def\e{{\varepsilon}}
\def\g{{\gamma}}
\def\G{{\Gamma}}
\def\s{{\sigma}}
\def\r{{\rho}}
\def\o{{\omega}}
\def\a{{\alpha}}
\def\d{{\delta}}
\def\l{{\lambda}}
\def\k{{\kappa}}
\def\p{{\prime}}
\def\for{{\,\,\,\forall \,\,\, }}
\def\ex{{\,\,\, \exists \,\,}}
\def\W{ {\mathcal{C}_a } }
\def\E{ {\mathcal{E} } }
\def\B{ {\mathcal{B} } }
\def\A{ {\mathcal{A} } }
\def\K{ {\mathcal{K} } }
\def\H{ {\mathcal{H} } }
\def\S{ {\mathcal{S} } }
\def\Y{ {\mathcal{Y} } }
\def\D{{\mathcal{D}}}
\def\L{ {\mathcal{L} } }
\def\pr{\vskip3pt \nl{\bf Proof. \,}}
\def\rin{ r_{\infty}}
\def\Im{\,\, \mbox{Im} \,\, }
\def\Re{\,\, \mbox{Re} \,\, }
\def\mat{\,\,\mbox{mat}}
\def\meas{\,\mbox{meas}\,}
\def\supp{\,\mbox{supp}\,}
\def\id{\,\mbox{id} \,}
\def\diam{\,\mbox{\upshape diam} \,}
\def\Russ{R\"{u}{\ss}mann}
\def\+R{+_{_{ \!\! \R}}}
\def\ex{{\mbox{\scriptsize{ex}}}}
\def\M{{\mathcal{M}}}
\def\vs{{\varsigma}}
\def\ve{{\varepsilon}}
\def\diag{\,\mbox{diag}\,}
\def\tr{\,\mbox{tr}\,}
\def\minus{{\smallsetminus}}
\def\Ha{{\mathscr{H}}}
\def\dstar{{\star\star}}
\def\mD{{\mathbf D}}
\def\div{{\,\mbox{div}\,}}
\def\curl{{\,\mbox{curl}\,}}

\def\relation{{\rho_- \leq \e^{3/2}}}

%\DeclareFontFamily{OT1}{pzc}{}
%\DeclareFontShape{OT1}{pzc}{m}{it}%
%              {<-> s * [0.900] pzcmi7t}{}
%\DeclareMathAlphabet{\mathscr}{OT1}{pzc}{m}{it}

\def\nl{\vglue0.3truemm\noindent}
\numberwithin{equation}{section}

\newtheorem{teo}{Theorem}[section]
\newtheorem{pro}[teo]{Proposition}
\newtheorem{lem}[teo]{Lemma}
\newtheorem{defin}[teo]{Definition}
\newtheorem{oss}[teo]{Remark}
\newtheorem{cor}[teo]{Corollary}
\renewcommand{\thefootnote}{\alph{footnote}}

%\markboth{Fabio Pusateri}
%{On the two-phase Euler equations}

\title{On the limit as the surface tension 
\\
and density ratio tend to zero
\\
for the two-phase Euler equations
}

\author{ 
Fabio Pusateri
\\
\vspace{-.2truecm}
{\footnotesize Courant Institute of Mathematical Sciences}
\\
\vspace{-.2truecm}
{\footnotesize New York University}
\\
\vspace{-.2truecm}
{\footnotesize 251 Mercer Street, New York, N.Y. 10012, USA}
\\
\vspace{-.2truecm}
\textit{\footnotesize{pusateri@cims.nyu.edu}}
}

\date{}

\maketitle

\begin{abstract}
We consider the free boundary motion of two perfect incompressible fluids
with different densities $\rho_+$ and $\rho_-$, separated by a surface of discontinuity along which 
the pressure experiences a jump proportional to the mean curvature by a factor $\e^2$.
Assuming the Raileigh-Taylor sign condition, and $\relation$,
%$\rho_- \leq \e^\exp$,
we prove energy estimates uniform in $\rho_-$ and $\e$.
As a consequence, we obtain convergence of solutions of the interface problem
to solutions of the free boundary Euler equations in vacuum without surface tension as $\e , \rho_- \rightarrow 0$.
\end{abstract}

%\keywords{Euler equations; vortex-sheet problem; surface tension.}

\section{Introduction}

\subsection{Description of the problem}
We consider the interface problem between two incompressible and inviscid fluids 
that occupy domains $\Omega^+_t$ and $\Omega^-_t$ in $\R^n$ ($n \geq 2$) at time $t$.
We assume $\Omega_0^+$ is compact and $\R^n = \Omega^+_t \cup \Omega^-_t \cup S_t$ where $S_t := \partial \Omega^\pm_t$.
We let $v_\pm$, $p_\pm$ and $\rho_\pm > 0$ denote respectively the velocity, 
the pressure and the constant density of the fluid occupying the region $\Omega^\pm_t$.
We assume the presence of surface tension on the interface, which is argued on physical basis 
to be proportional to the mean curvature $\k_+$ of the hypersurface $S_t$.

The equations of motion are given by\footnote{Here we are introducing the notation
$f = f_+ \chi_{\Omega_t^+} + f_- \chi_{\Omega_t^-}$ for any $f_\pm$ defined on $\Omega_t^\pm$.}
\begin{equation*}
\tag{E}
\label{E}
\left\{
\begin{array}{ll}
\rho (v_t + v \cdot \nabla v) = - \nabla p   \quad &   x \in \R^n \smallsetminus S_t
\\
\\
\nabla \cdot v = 0    &   x \in \R^n \smallsetminus S_t
\\
\\
v (0,x) = v^0 (x) % \in H^s (\R^n \smallsetminus S_0) 
	& x \in \R^n \smallsetminus S_0 \, ,
\end{array}
\right.
\end{equation*}
with corresponding boundary conditions for the interface evolution and pressure's jump given by
\begin{equation}
\tag{BC}
\label{BC}
\left\{
\begin{array}{l}
\partial_t + v_\pm \cdot \nabla  \,\, \mbox{is tangent to} \,\,
		\left\{ (t,x) \, | \, x \in S_t \right\} %\bigcup_t S_t \subset \R^{n+1}
\\
\\
p_+ (t,x) - p_- (t,x) = \e^2 \k_+ (t,x) \,\, , \,\,\, x \in S_t  \, .
\end{array}
\right.
\end{equation}
We are interested in analyzing the asymptotic behaviour of solutions of the above equations when 
$\e , \rho_- \rightarrow 0$.
Our result, based on the previous works of Shatah and Zeng \cite{shatah1,shatah2,shatah3},
is convergence to the solution $(v^\infty, \partial \Omega_t^\infty)$ of the system
%the free-boundary problem for Euler equations 
%governing the motion of a drop of perfect, incompressible and inviscid fluid in vacuum, 
%
%
\begin{equation}
\label{E_0}
\tag{$\mbox{E}_0$}
\left\{
\begin{array}{ll}
\rho_+ ( \partial_t v^\infty + v^\infty \cdot \nabla v^\infty) = - \nabla p^\infty  \quad &   x \in \Omega^\infty_t
\\
\\
\nabla \cdot v^\infty = 0    &   x \in \Omega^\infty_t
\\
\\
v^\infty (0,x) = v^0_+ (x) % \in H^s (\R^n \smallsetminus S_0)
        	& x \in \Omega^+_0 \, ,
\end{array}
\right.
\end{equation}
with corresponding boundary conditions
\begin{equation}
\label{BC_0}
\tag{$\mbox{BC}_0$}
\left\{
\begin{array}{l}
\partial_t  +  v^\infty \cdot \nabla  \,\, \mbox{is tangent to} \,\, 
\left\{ (t,x) \, | \, x \in S_t^\infty \right\}  %\bigcup_t \partial \Omega_t^\infty \subset \R^{n+1}
\\
\\
p^\infty (t,x) = 0  \,\, , \,\,\, x \in \partial \Omega_t^\infty   \,  .
\end{array}
\right.
\end{equation}

Equations \eqref{E_0}-\eqref{BC_0} typically model the free boundary motion of a drop 
of perfect incompressible fluid in vacuum (one-phase problem).
The system \eqref{E}-\eqref{BC} models instead 
the motion of two perfect fluids with different densities separated by an interface 
moving with the normal components of the velocities of the two fluids (two-phase problem).
When considering the one-phase problem one can think of 
a fluid with very small density $\rho_-$ (air, for instance) replacing vacuum. 
In this case, \eqref{E_0}-\eqref{BC_0} can still be considered as an idealized model
but, even when $\rho_-$ is very small compared to $\rho_+$, the two-phase system 
provides a more accurate description of the motion.
Similarly, for $\rho_- \ll \rho_+$ and $\e \ll 1$,  \eqref{E}-\eqref{BC} represent a more accurate model
for the problem of one fluid surrounded by air in the presence of small, but not negligible, 
surface tension effects holding the fluid together.

Due to their physical and mathematical interest, 
freeboundary problems for Euler equations have been extensively studied in recent years. 
Following the breakthrough of Wu in \cite{Wu1,Wu2}, where local well-posedness for arbitrary data in 
Sobolev spaces was proved in $2$ and $3$ dimensions for the irrotational gravity water wave problem,
a vast body of literature has been produced.
Many works have dealt with the water wave problem 
with or without surface tension and with or without vorticity,
see \cite{Lindblad,ChriLind,CoutShko2,shatah1,shatah3} and references therein.

A natural question related to the well-posedness of this set of problems
is the one concerning the relation between their solutions in regimes
which are a perturbation of one another.
For the one-phase problem \eqref{E_0} with vanishing surface tension
- i.e. where the boundary condition for the pressure \eqref{BC_0} is replaced by $p^\infty = \e \k^\infty$ -
it was proved in \cite{AmbroseMasmoudi1}, for the irrotational 2-d case,
and in \cite{shatah1}, for the general case,
that solutions to this problem converge to solution of \eqref{E_0}-\eqref{BC_0} as $\e \rightarrow 0$.
Recently, Cheng, Coutand and Shkoller \cite{CoutShko3} and the author \cite{VS} proved that
solutions of \eqref{E}-\eqref{BC} with $\e = 1$ converge to solutions 
of the one-phase problem with surface tension as $\rho_- \rightarrow 0$.

In absence of surface tension, i.e. $\e = 0$ in \eqref{BC}, 
the two-phase problem \eqref{E}-\eqref{BC} 
for the free boundary motion of two fluids is ill-posed
due to the Kelvin-Helmotz instability \cite{Ebin2}.
In \cite{Beale} it is shown how, indeed, the surface tension regularizes the linearized problem.
For the irrotational problem with surface tension, Ambrose \cite{Ambrose} 
and Ambrose and Masmoudi \cite{AmbroseMasmoudi2} proved well-posedness 
respectively in $2$ and $3$ dimensions.
Cheng, Coutand and Shkoller \cite{CoutShko1} proved well-posedness for the full 3-d problem with rotation.
Well-posedness is also obtained (in any dimension) by Shatah and Zeng \cite{shatah3}.

We recall that the free boundary problem for Euler equations in vacuum \eqref{E_0}-\eqref{BC_0}
is also known to be ill-posed \cite{Ebin1} due to Rayleigh-Taylor instability,
which occurs if one does not assume the sign condition
\begin{equation}
\label{RT}
\tag{RT}
- \nabla_{N} p^\infty (x,t) \geq a > 0 \hskip 10 pt \for \, x \in S_t \, .
\end{equation}
%In the case of irrotational velocity fields this condition is preserved by the flow \cite{Wu1} but in the general
%case with vorticity it is a necessary condition for well-posedness.

The result we are presenting here is largely based on the geometric intuition and techniques
introduced in \cite{shatah1} and further developed in \cite{shatah2,shatah3}.
Our paper is organized as follows. 
The geometric approach of \cite{shatah1,shatah2} is presented in section \ref{secgeometry} and
an explanation of the geometric intuition behind the Kelvin-Helmotz and Raileigh-Taylor instabilities
is given in \ref{seclinearization}.
In section \ref{secEE} we define the energy for the problem and state
theorems on energy estimates which are independent of $\e$ and $\rho_-$.
As a corollary, we state the result about convergence 
of solutions of \eqref{E}-\eqref{BC} to solutions of \eqref{E_0}-\eqref{BC_0}.
Section \ref{secproof} is dedicated to the proofs of the statements.
In \ref{secpreliminaries} we first collect some preliminary estimates
and then derive an evolution equation for the mean-curvature $\k_+$ (lemma \ref{lemD_t^2}),
upon which our energy is based.
In \ref{secproofteoVS1} we prove that our energy controls in a suitable
fashion the Sobolev norms of the velocity fields and the mean-curvature of the free surface.
In \ref{secproofteoEE} we study the time-evolution of the energy,
where an extra higher order energy term (due to the Kelvin-Helmotz instability) will appear.
Assuming some smallness condition on $\rho_-$ as a function of $\e$, 
the extra energy term is controlled in \ref{secenergyinequality},
therefore concluding the proof of energy estimate.
In the appendix %\ref{tools} 
we gathered some technical material contained in \cite{shatah1,shatah2} used in our proofs.

\subsection{The geometric approach to Euler equations} % \cite{shatah1,shatah2}}
\label{secgeometry}
It is well-known that the interface problem between two fluids
has a variational formulation on a subspace of volume-preserving homeomorphisms.
For the water wave problem, this was observed for the first time by Arnold in his seminal paper \cite{Arnold66},
where he pointed out that Euler equations for the motion 
of an inviscid incompressible fluid can be viewed as the geodesic flow 
on the infinite-dimensional manifold of volume-preserving diffeomorphisms.
This point of view has been adopted by several authors in works 
such as \cite{Brenier,EbinMarsden,Shnirelman},
and more recently by Shatah and Zeng in \cite{shatah1,shatah2,shatah3}.
%Using this general geometric approach these last authors able to easily identify terms to include in
%the energies, obtain energy estimates and ultimately prove local well-posedness
%for the fluid interface problem and related free boundary problems for Euler equations 
%(see \cite[sec. 6]{shatah3}).

\subsubsection{Lagrangian formulation}
We first recall that \eqref{E}-\eqref{BC} has a conserved energy\footnote{
Notice that the conserved energy does not control 
the $L^2$ norm of $v_-$ in the asymptotic regime $\rho_- \rightarrow 0$.
}
\begin{equation}
\label{energy0}
E = E_0 (S_t, v) = \int_{\R^n \minus S_t} \frac{\rho {|v|}^2 }{2} \, dx + \e^2 \int_{S_t} \, dS =: 
										\int_{\R^n \minus S_t} \frac{\rho {|v|}^2 }{2} \, dx +  \e^2 S (S_t) \, .
\end{equation}
For $y \in \Omega_0^\pm$ we define $u_\pm (t,y)$ to be the Lagrangian coordinate map associated to
the velocity field $v_\pm$, i.e the solution of the ODE
\begin{equation}
\label{lagrangianmap}
\frac{d x}{dt} = v_\pm (t, x) \hskip8pt, \hskip10pt  x(0,y)= y \hskip 10pt \for y \in \Omega_0^\pm \, .
\end{equation}
Also, for any vector field $w$ on $\R^n \minus S_t$ we define its material derivative by
\begin{equation*}
\mathbf{D}_t w := w_t + v \cdot \nabla w =  {(w \circ u)}_t \circ u^{-1} \, .
\end{equation*}
In \cite[sec. 2]{shatah2} the authors derive from (\ref{E})-(\ref{BC}) 
an equation for the physical pressure: 
\begin{equation}
\label{physicalp}
\left\{
\begin{array}{lcl} 
- \Delta p & = & \rho \tr (Dv^2)  %\hskip 15pt, \hskip20pt  x \in (\R^n \minus S_t)
\\
\\
\left. p_\pm \right|_{S_t}  &  =  &
						\mathcal{N}^{-1} \left\{ - \frac{1}{\rho_\mp} \mathcal{N}_\mp  \e^2 \k_\mp
						- 2 \nabla_{v_+^\top - v_-^\top} v_+^\bot
				    - \Pi_+ ( v_+^\top, v_+^\top ) - \Pi_- ( v_-^\top, v_-^\top ) \right.
				    % \\
				    \\
				    &  &  \left. 
				    - \nabla_{N_+} \Delta_+^{-1}  \tr (Dv^2)  - \nabla_{N_-} \Delta_-^{-1}  \tr (Dv^2)
				    \right\}
\end{array}
\right.
\end{equation}
where $\Pi_\pm$ denotes the second fundamental form of the hypersurface $S_t$ (with respect to the outward unit normal vector $N_\pm$ relative to the domain $\Omega_t^\pm$) and $\mathcal{N}$ is given by
\begin{equation}
\label{N}
\mathcal{N} := \frac{\mathcal{N}_+}{\rho_+}  +   \frac{\mathcal{N}_-}{\rho_-} \, ,
\end{equation}
with $\mathcal{N}_\pm$ denoting the Dirichlet-to-Neumann operator on the domain $\Omega^\pm_t$.
From (\ref{lagrangianmap}) we see that in Lagrangian coordinates Euler equations  assume the form
\begin{equation}
\label{Elagrangian}
\rho u_{tt} = - \nabla p \circ u \hskip 20pt u(0) = \id_{\Omega_0}
\end{equation}
with $p$ determined by (\ref{physicalp}).

Since $v$ is divergence free, $u_\pm$ are volume-preserving maps on $\Omega_0^\pm$.
Moreover, $u_+(t, S_0) = u_- (t, S_0)$
even if the restriction of $u_+$ and $u_-$ to $S_0$ do not coincide in general.
This leads to the definition of the space $\G$ of admissible Lagrangian maps for the interface problem:
\begin{align}
\nonumber
\G = & \left\{ \Phi = \Phi_+ \chi_{\Omega_0^+} + \Phi_- \chi_{\Omega_0^-} \,\, \mbox{s.t.} \,\,\,
										\Phi_\pm : \Omega_0^\pm \rightarrow \Phi_\pm (\Omega_0^\pm) \,\, 
										\right. 
										\\
										& \left.	
										\mbox{is volume-preserving homeomorphism}
										\, , \,
										\partial \Phi_\pm (\Omega_0^\pm) = \Phi_\pm (\partial \Omega_0^\pm)  \right\} \, .
\label{defG}
\end{align}
Denoting $S (\Phi) = \int_{\Phi(S_0)} dS$,
we can rewrite the energy (\ref{energy0}) in Lagrangian coordinates as
\begin{equation*}
E_0 (u, u_t) = \int_{\R^n \minus S_0} \frac{ \bar{\rho} {|u_t|}^2 }{2} \, dy + \e^2 S(u)
\end{equation*}
where $ (u,u_t) $ is in the tangent bundle of $\G$ and $\bar{\rho} := \rho \circ u$.
The conservation of the above energy suggests that \eqref{E}-\eqref{BC}
has a Lagrangian action
\begin{equation}
\label{action}
I (u) = \int \int_{\R^n \minus S_0} \frac{\bar{\rho} {|u_t|}^2 }{2} \, dy \, dt \, - \e^2 \int S(u) \, dt \, .
\end{equation}

\subsubsection{The geometry of $\G$}
\label{secgeometryG}
In order to derive the Euler-Lagrange equations associated to the action $I$,
we consider $\G$ as a submanifold of $L^2 ( \bar{\rho} dy )$
and identify its tangent and normal spaces.
It is easy to see that the tangent space of $\G$ at the point $\Phi$ is given by divergence-free vector fields
with matching normal components in Eulerian coordinates\footnote{We follow the 
convention used in \cite{shatah2} where the Lagrangian description of any vector field 
$X : \Phi (\Omega_0) \rightarrow \R^n$ is denoted by $\bar{X} = X \circ \Phi$.}
\begin{equation}
\nonumber
T_{\Phi} \G = \left\{ \bar{w} : \R^n \minus S_0 \rightarrow \R^n \,\, :
					\nabla \cdot w = 0 \,\, 
					\mbox{ and } \left. w_+^\bot + w_-^\bot \right|_{\Phi(S_0)} = 0 %\, ,\,\,
					%\mbox{where } w  = \bar{w} \circ \Phi^{-1}
					\right\} \, ,
\label{defTG}
\end{equation}
while the normal space is
\begin{equation}
\label{TPhiort}
{(T_\Phi \G)}^\bot = \left\{ - (\nabla \psi) \circ \Phi \, : \,  
								\rho_+ \psi_+ \left|_{\Phi(S_0)} \, =  \, \rho_- \psi_- \right|_{\Phi (S_0)} =: \psi^S  \right\} \, .
\end{equation}
A critical path $u(t, \cdot)$ of $I$ satisfies
\begin{equation}
\label{critpath}
\bar{\mathscr{D}}_t u_t + \e^2 S^\p (u) = 0
\end{equation}
where $S^\p (u)$ denotes the tangential gradient of $S(u)$ and 
$\bar{\mathscr{D}}_t$ is the covariant derivative on $\G$ along $u(t)$.
In order to verify that the Lagrangian map associated to a solution of \eqref{E}-\eqref{BC}
is indeed a critical path of (\ref{action}) one needs to compute $S^\p$ and $\bar{\mathscr{D}}_t$.
Let
\begin{equation}
u_{tt} = \bar{\mathscr{D}}_t  u_t  +  II_{u(t)} (\bar{v}, \bar{v})
\end{equation}
where $II_{u(t)} (\bar{w}, \bar{v}) \in {(T_{u(t)} \G)}^\bot$ denotes the second fundamental form on $T_{u(t)} \G$.
From \eqref{TPhiort} there exists a unique scalar function $p_{v,v}$ defined on $\R^n \minus S_t$ such that 
\begin{equation*}
%\label{II}
II_{u(t)} (\bar{v}, \bar{v}) = - \nabla p_{v,v} \circ u \in {\left( T_{u(t)} \G \right)}^\bot
\end{equation*}
In \cite{shatah2} it is shown that $p_{v,v}$ is given by
\begin{equation}
\label{p_vw}
\left\{
\begin{array}{rcl} 
- \Delta p_{v,v} & =  & \tr {(Dv)}^2
\\
\\
\left. p^\pm_{v,v} \right|_{S_t} &  = & \frac{1}{\rho_\pm} p^S_{v,v} =  
						- \frac{1}{\rho_\pm} \mathcal{N}^{-1} \left\{ 2 \nabla_{v_+^\top - v_-^\top} v_+^\bot
				    - \Pi_+ ( v_+^\top, v_+^\top ) - \Pi_- ( v_-^\top, v_-^\top ) \right.
				    % \\
				    \\
				    &  &  \left. 
				    - \nabla_{N_+} \Delta_+^{-1}  \tr {(Dv)}^2  - \nabla_{N_-} \Delta_-^{-1}  \tr {(Dv)}^2
				    \right\}
				    =:  - \frac{1}{\rho_\pm} \mathcal{N}^{-1} a
				    \, .
\end{array}
\right. 
\end{equation}
Hence, in Eulerian coordinates we can write
\begin{equation}
\label{covariant}
\mathscr{D}_t v  :=  \left(  \bar{\mathscr{D}}_t \bar{v}  \right) \circ u^{-1} = \mathbf{D}_t v + \nabla p_{v,v}  \, .
\end{equation}
We point out that for the water wave problem \eqref{E_0}-\eqref{BC_0} 
the second fundamental form on the space of admissible Lagrangian maps 
has a simpler expression, namely
$$II_{u(t)}^\star (\bar{v}, \bar{v}) = - \nabla p^\star_{v,v} \circ u $$ 
%\in {\left( T_{u(t)} \G^\star \right)}^\bot$$ 
with
\begin{equation}
\label{pwaterwave}
\left\{
\begin{array}{lll}
- \Delta p^\star_{v,v}  & = &   \tr {(Dv)}^2    %\hskip8pt \mbox{in} \hskip5pt \Omega_t
\\
\\
\left. p^\star_{v,v} \right|_{\partial \Omega_t} & = & 0  \, .
\end{array}
\right.
\end{equation}
Observe that $p^\star_{v,v}$ coincides with $p^\infty$ in equation \eqref{E_0}-\eqref{BC_0}.

To compute $S^\p (u)$ one observes that for any $\bar{w} \in T_u \G$ 
the formula for the variation of surface area gives
\begin{equation*}
{ \langle S^\p (u) , \bar{w} \rangle }_{L^2 (\R^n \minus S_0, \rho dy)} = \int_{S_t} \k_+ w_+^\bot \, dS \, .
\end{equation*}
Then it is not hard to verify that the unique representation in Eulerian coordinates
of $S^\p (u)$ as a functional acting on $T_u \G$  is
\begin{equation}
\label{Sp}
S^\p (u) = \nabla p_\k  \hskip 15pt  \mbox{with}   \hskip 15pt  
					p_\k^\pm = \frac{1}{\rho_- \rho_+} \H_\pm \mathcal{N}^{-1} \mathcal{N}_\mp \k_\mp \, ,
\end{equation}
where $\H_\pm$ denotes the harmonic extension in the domain $\Omega_t^\pm$.
From \eqref{physicalp}, \eqref{p_vw} and \eqref{Sp}
one obtains the identity $p = \rho ( p_{v,v}  +  \e^2 p_\k )$,
and we see from  \eqref{covariant} and \eqref{Sp}
that a solution of \eqref{critpath} equivalently satisfies
\begin{equation}
\label{Ematerial}
\mathbf{D}_t v  + \nabla p_{v,v} + \e^2 \nabla p_\k  = 0 \, ,
\end{equation} 
which is exactly \eqref{Elagrangian} in Eulerian coordinates.

\subsubsection{Linearized equation and instability for water waves problems}
\label{seclinearization}

The Lagrangian formulation discussed above provides a convenient setting to study the linearization of the problem. 
Considering variations around the solution $u_t$ of \eqref{critpath} and taking 
a covariant derivative with respect to the variation parameter,
one obtains the following linearization for $\bar{w} (t, \cdot) \in T_{u(t)} \G$:
\begin{equation}
\label{linearization}
\bar{\mathscr{D}}^2_t \bar{w} + \bar{\mathscr{R}} (u) (\bar{u_t}, \bar{w}) u_t  +  \e^2 \bar{\mathscr{D}}^2 S (u) \bar{w} = 0 \, ,
\end{equation}
where $\bar{\mathscr{R}}$ denotes the curvature tensor of the manifold $\G$
and $\bar{\mathscr{D}}^2 S (u)$ is the projection on $T_{u} \G$ of the second variation of the surface area.
Both of these linear operators acting on $T_u \G$ 
play a central role in the understanding of the problem and in the derivation of 
high-order energies based upon their leading order terms.
In \cite{shatah1} a general formula for $\bar{\mathscr{D}}^2 S (u)$ is derived.
For the interface problem its leading order term $\bar{\mathscr{A}}$ 
is given in Eulerian coordinates by \cite[pp. 857-858]{shatah2}
\begin{eqnarray}
\nonumber
\mathscr{A}(u) (w) & = & \nabla f_+ \chi_{\Omega^+}   +    \nabla f_- \chi_{\Omega^-}
\\
\mbox{with} \qquad
f_\pm & = &  \frac{1}{\rho_+ \rho_-} \H_\pm \mathcal{N}^{-1} \mathcal{N}_\mp (- \Delta_{S_t}) w_\pm^\bot  \, .
\label{A}
\end{eqnarray}
It is easy to see that $\bar{\mathscr{A}}$ is a third-order\footnote{
Assuming  $S_t$ is smooth enough.}
self-adjoint and positive semi-definite operator with
$$ \bar{\mathscr{A}} (u) ( \bar{w}, \bar{w}) = | \nabla w_\pm^\bot |_{L^2 (S_t)}^2 \, .$$
Further computations \cite[pp 859 - 860]{shatah2}
show that the leading-order 
term $\bar{\mathscr{R}}_0 (u) (\bar{v})$ of the unbounded sectional curvature operator 
$\bar{\mathscr{R}}(u) (\bar{v}, \cdot) \bar{v}$
is given in Eulerian coordinates by
\begin{eqnarray*}
%\label{R0}
\mathscr{R}_0 (u) (\bar{v}) w & = & \nabla f_+ \chi_{\Omega^+}   +    \nabla f_- \chi_{\Omega^-}
\\
\mbox{with} \qquad 
f_\pm & = & \frac{1}{\rho_+ \rho_-} \H_\pm \mathcal{N}^{-1} \mathcal{N}_\mp  \nabla_{v_+^\top - v_-^\top}
\mathcal{N}^{-1} \mathcal{D} \cdot \left( w_\pm^\bot (v_+^\top - v_-^\top) \right) \, .
\end{eqnarray*}
%Assuming $S_t$ and $\bar{v}$ smooth enough it results
%\begin{equation}
%\label{R}
%\bar{\mathscr{R}} (\bar{v}, \bar{w}) = \bar{\mathscr{R}}_0 (u) + \, 
%							\mbox{at most first-order differential operators} \, .
%\end{equation}
%
%
Noticing that $\bar{\mathscr{R}}_0 (u)$ is a second-order negative semi-definite differential operator,
we immediately see that the linearized Euler equations would be ill-posed for $\e = 0$.
This is the so-called {\it Kelvin-Helmotz instability} for the two fluids interface problem,
occuring in the absence of surface tension.

We mention that the same geometric setting described above has been initially
developped by Shatah and Zeng in \cite{shatah1}, where they treated the problem of 
a priori energy estimated for Euler equations in vacuum.
In \cite[sec 2.2]{shatah1} the authors showed 
that the differential operators involved in the linearization \eqref{linearization} satisfy
\begin{eqnarray*}
\bar{\mathscr{R}} (\bar{v}, \bar{w}) & = & \bar{\mathscr{R}}_0^\star (u) + \, \mbox{bounded operators} 
\\
\bar{\mathscr{D}}^2 S (u) & = & \bar{\mathscr{A}}^\star (u) + \, \mbox{second-order differential operators} 
\end{eqnarray*}
with
\begin{eqnarray*}
\bar{\mathscr{R}}_0^\star (u) \bar{w} \cdot \bar{w}
							= \int_{S_t}  - \nabla_{N} p^\star_{v,v} {\left| \nabla w^\bot \right|}^2 \, dS
\hskip10pt , \hskip20pt
\bar{\mathscr{A}}^\star (u)  \bar{w} \cdot \bar{w}  
							= \int_{S_t} {\left| \nabla w^\bot \right|}^2 \, dS \, .
\end{eqnarray*}
Since also in this case $\bar{\mathscr{A}}^\star (u)$ is generated by the presence of surface-tension,
we see that \eqref{linearization} is ill-posed for $\e=0$ if one does not assume
the sign condition \eqref{RT}. This is the so called {\it Raileigh-Taylor} instability for the water wave problem.

\section{Theorems on Energy Estimates}
\label{secEE}
%In \cite{shatah1} and \cite{shatah2} the authors used the described geometrical approach
%to prove a priori energy estimates respectively for solutions of the water wave and vortex sheet problems 
%and eventually obtained well-posedness in \cite{shatah3}.
%In what follows we use their tecniques to obtain
%energy estimate for \eqref{E}-\eqref{BC} independent of $\e$ and $\rho_-$ under the assumption \eqref{RT}.
Following \cite{shatah1,shatah2} we define a set of neighbouring hypersurfaces of the initial hypersurface $S_0$.

\begin{defin}
\label{defLambda}
Let $\Lambda = \Lambda (S, s, \delta, L)$ for some $s > \frac{n+1}{2}$, $L, \delta > 0$ 
be the collection of all hypersurfaces $\tilde{S}$
such that $(a)$ there exists a diffeomorphism $F: S \rightarrow \tilde{S} \subset \R^n$ with
\begin{equation*}
{|F - \id_S|}_{H^s} (S) < \delta
\end{equation*}
and $(b)$ ${|\kappa|}_{ H^{s-2} (\tilde{S}) } < L$ for any $\tilde{S} \in \Lambda$.
Define $\Lambda_0 := \Lambda (S_0, 3k - \frac{1}{2}, \delta, L)$ for some $k$ satisfying $3k > \frac{n}{2} + 2$,
with $0 < \delta \ll 1$ and $L>0$ to be determined later.
\end{defin}

\nl
We now define the energy for \eqref{E}-\eqref{BC}.
\begin{defin}
Let $k$ be any integer such that $3 k > \frac{n}{2} + 2$.
Consider domains  $\Omega_t^\pm \subset \R^n$ with $\Omega_t^+$ compact 
and interface
$S_t = \partial \Omega_t^\pm \in \Lambda_0$.
Let $v (t, \cdot) \in H^{3k} (\R^n \minus S_t)$ be any divergence-free vector field 
with $v_+^\bot + v_-^\bot = 0$.
Let $\omega_\pm$ denote the $\curl$ of $v_\pm$, that is $\omega_i^j = \partial_i v^j - \partial_j v^i$,
and define
\begin{equation}
\label{barN}
\bar{\mathcal{N}} := 
\frac{1}{\rho_+\rho_-} \mathcal{N}_+ \mathcal{N}^{-1}  \mathcal{N}_-  \, .
\end{equation}
We define our energy by
\begin{equation}
\label{Energy}
E (S_t, v(t, \cdot) ) =   E_1 + E_2 + E_{RT}  + {| \o_+ |}^2_{ H^{3k-1} (\Omega_t^+) } 
%%%%%%%
+ \e {| \o_- |}^2_{ H^{3k-1} (\Omega_t^-) } 
%%%%%%%
\end{equation}
where
\begin{align}
\label{E_1}
E_1  & := \frac{1}{2} \int_{S_t} {\left| \bar{\N}^\frac{1}{2} { (- \Delta_{S_t} \bar{\N} )}^{k-1}  
		\mD_{t_+} \k_+ \right|}^2 \, dS 
		\\
		\nonumber
		& = \frac{1}{2}  \int_{S_t} \mathbf{D}_{t_+} \k_+
		\bar{\N} { (-  \Delta_{S_t} \bar{\N} )}^{2k-2} \mathbf{D}_{t_+} \k_+  \, dS  \, ,
\\
\nonumber
\\
\label{E_2}
E_2 & :=  \frac{\e^2}{2}  \int_{S_t}   {\left| \nabla^{\top} {( - \bar{\N} \Delta_{S_t} )}^{k - 1} \bar{\N} \k_+ \right|}^2  \, dS
             	\\
		\nonumber
 	    	& =
	     	-  \frac{\e^2}{2}  \int_{S_t} \k_+ \bar{\N} { (-  \Delta_{S_t} \bar{\N} )}^{2k-1} \k_+  \, dS   \, ,
\\
\nonumber
\\													
\label{E_RT}
E_{RT}  & := \frac{\rho_+ + \rho_-}{2}  \int_{S_t}  
		- \nabla_{N_+} p^\star_{v,v} {\left| {(- \bar{\N} \Delta_{S_t} )}^{k-1} \bar{\N} \k_+ \right|}^2 \, dS \, .
\end{align}
\end{defin}

\nl
The following proposition establishes bounds
of relevant Sobolev norms of the velocity fields and mean-curvature in terms of the energy.

\begin{pro}
\label{teoVS1}
Let $3k > \frac{n}{2} + 2$ and assume \eqref{RT}.
Then, for $S_t \in \Lambda_0$, there exists a uniform constant $C_0$ such that
\begin{align}
\label{ekappaenergy}
&  {| \kappa_+ |}^2_{ H^{3k-2}(S_t) } \hskip5pt , \hskip 7pt \e^2 {| \kappa_+ |}^2_{ H^{3k-1}(S_t) } \leq C_0 (1 + E)
%\\ \nonumber
\\
\label{venergy+}
&  {| v_+ |}^2_{ H^{3k} (\Omega_t^+) }  \leq  C_0 (1 + E + E_0)
%\\ \nonumber
\\
%\nonumber
\label{venergy-low}
&  {| v_- |}^2_{ H^{3k - 1} (\Omega_t^-) }  \leq  C_0 (1 + E + E_0) %\hskip5pt , \hskip 7pt
\\
\label{venergy-}
&  \e {| v_- |}^2_{ H^{3k} (\Omega_t^-) }  \leq  C_0 {(1 + E + E_0)}^2  \, .
\end{align}
\end{pro}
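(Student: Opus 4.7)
The plan is to derive the four inequalities from the four pieces of $E$ together with the conserved energy $E_0$, using Sobolev-equivalence properties of the operator $\bar{\N}$, the Rayleigh--Taylor sign \eqref{RT}, div-curl Hodge estimates on $\Omega_t^\pm$, and the evolution equation for $\kappa_+$ provided by lemma \ref{lemD_t^2}.

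First, I would record that $\bar{\N}$ is a self-adjoint, strictly positive, first-order pseudodifferential operator on $S_t$, uniformly in $\rho_\pm$: indeed $\bar{\N}^{-1} = \rho_+\mathcal{N}_+^{-1} + \rho_-\mathcal{N}_-^{-1}$ is a sum of operators of order $-1$, so $\bar{\N}$ has the mapping behaviour of ${(-\Delta_{S_t})}^{1/2}$, with bounds depending only on $\Lambda_0$ via the estimates collected in the appendix from \cite{shatah1,shatah2}. This yields the Sobolev equivalence $|\bar{\N}^s f|_{L^2(S_t)} \simeq |f|_{H^s(S_t)}$, modulo lower-order terms absorbable into the constants when $S_t \in \Lambda_0$. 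Applied to $E_2$, whose integrand contains the order-$(3k-1)$ operator $\nabla^\top{(-\bar{\N}\Delta_{S_t})}^{k-1}\bar{\N}$, it gives $\e^2|\kappa_+|^2_{H^{3k-1}(S_t)} \lesssim 1+E$. Applied to $E_{RT}$, whose integrand contains the order-$(3k-2)$ operator ${(-\bar{\N}\Delta_{S_t})}^{k-1}\bar{\N}$ multiplied by the coefficient $-\nabla_{N_+}p^\star_{v,v}\geq a>0$ from \eqref{RT}, it gives $|\kappa_+|^2_{H^{3k-2}(S_t)} \lesssim a^{-1}(1+E)$, completing \eqref{ekappaenergy}.

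For the velocity estimates I would use the standard div-curl Hodge estimate. On the bounded domain $\Omega_t^+$,
\[
|v_+|^2_{H^{3k}(\Omega_t^+)} \lesssim |v_+|^2_{L^2(\Omega_t^+)} + |\omega_+|^2_{H^{3k-1}(\Omega_t^+)} + |v_+^\bot|^2_{H^{3k-1/2}(S_t)},
\]
the first term being bounded by $E_0/\rho_+$ and the second by $E$. For the boundary trace, lemma \ref{lemD_t^2} gives the leading-order shape-derivative identity $\mathbf{D}_{t_+}\kappa_+ = -\Delta_{S_t}v_+^\bot + \text{l.o.t.}$; inverting $-\Delta_{S_t}$ on $S_t$ and using the bound $|\mathbf{D}_{t_+}\kappa_+|^2_{H^{3k-5/2}(S_t)} \lesssim E_1$ (read off from the order-$(3k-5/2)$ symbol in $E_1$) yields $|v_+^\bot|^2_{H^{3k-1/2}(S_t)} \lesssim 1+E$ after absorbing the lower-order remainders into the curvature bound already obtained, giving \eqref{venergy+}. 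For $v_-$ on the exterior domain $\Omega_t^-$, the matching $v_+^\bot + v_-^\bot = 0$ on $S_t$ transfers the same boundary control to $v_-^\bot$; the analogous div-curl estimate (with decay at infinity) combined with the $\e$-weighted vorticity term in $E$ produces \eqref{venergy-} directly, while \eqref{venergy-low} is obtained one derivative lower by interpolating $|\omega_-|_{H^{3k-2}}$ between its $L^2$ norm (uniformly bounded via transport of vorticity from the initial data) and the $\e$-weighted $H^{3k-1}$ norm, then combining with the boundary term.

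The main obstacles I expect are (i) the precise inversion of the shape-derivative identity for $\kappa_+$: the commutators among $\mathbf{D}_{t_+}$, $-\Delta_{S_t}$, and $\bar{\N}$ generate lower-order boundary and bulk contributions that must be absorbed by the previously established curvature and velocity bounds in a bootstrap manner, ultimately producing the quadratic factor $(1+E+E_0)^2$ on the right-hand side of \eqref{venergy-}; and (ii) extracting the $\e$-independent $H^{3k-1}$ estimate for $v_-$ in spite of the $\e$-degeneracy of the top-order vorticity energy term, which forces the interpolation against a transport-propagated lower-order vorticity bound. Both technical points are handled using the auxiliary estimates collected in the appendix from \cite{shatah1,shatah2}.
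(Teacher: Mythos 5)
Your treatment of \eqref{ekappaenergy} and \eqref{venergy+} essentially matches the paper's: read the two curvature bounds from $E_2$ and $E_{RT}$ using the $\bar{\N}\sim(-\Delta_{S_t})^{1/2}$ equivalence and \eqref{RT}, then use a div-curl estimate on $\Omega_t^+$ with boundary trace controlled by $E_1$ through the shape-derivative formula for $\kappa_+$. One small slip: that formula is the geometric identity \eqref{D_tkappa}, $\mD_{t_+}\kappa_+ = -\Delta_{S_t}v_+^\bot - v_+^\bot|\Pi|^2 + \nabla_{v_+^\top}\kappa_+$, not lemma \ref{lemD_t^2} (which is the \emph{second} material derivative of $\kappa_+$, used only in the energy-evolution proof).

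The proof of the two $v_-$ estimates has two genuine gaps.

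\textbf{Gap 1: the $L^2$ control of $v_-$ is missing.} Any div-curl estimate on $\Omega_t^-$ needs $|v_-|_{L^2(\Omega_t^-)}$, and you do not explain where it comes from. The conserved energy $E_0$ supplies only $\rho_-|v_-|^2_{L^2}/2$, which degenerates as $\rho_-\to 0$ (the paper emphasizes exactly this in a footnote). The paper instead splits $v_- = \nabla\H_-\N_-^{-1}v_-^\bot + v_{-,r}$: the irrotational part is controlled by $v_-^\bot=-v_+^\bot$ and $E_0$, while the rotational part is controlled by the Noether/pull-back identity \eqref{consequenceP_r}, $v_r(t)=P_r\bigl(S_t, (Du^{-1})^\ast v(0,u^{-1}(t,\cdot))\bigr)$, together with the Lagrangian-map bounds \eqref{t_1}--\eqref{Du-}, yielding a bound depending only on initial data. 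Without this step, \eqref{venergy-low} and \eqref{venergy-} cannot close.

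\textbf{Gap 2: the interpolation for $|\omega_-|_{H^{3k-2}}$ produces a negative power of $\e$.} You propose interpolating between $|\omega_-|_{L^2}$ (bounded by transport) and the $\e$-weighted quantity $\e|\omega_-|^2_{H^{3k-1}}\leq E$. This yields $|\omega_-|_{H^{3k-2}}\lesssim |\omega_-|_{L^2}^{1/(3k-1)}\bigl(\e^{-1/2}\sqrt{E}\bigr)^{(3k-2)/(3k-1)}$, which is \emph{not} uniform as $\e\to 0$; \eqref{venergy-low} would fail. The paper avoids this by observing that pull-backs commute with exterior derivatives, so $\curl v_-(t,\cdot)=(Du^{-1})^\ast\curl v_-(0,u^{-1}(t,\cdot))$, and then the Lagrangian bound \eqref{Du-} gives the \emph{full} $H^{3k-2}$ estimate \eqref{curlv-} on the vorticity directly — no interpolation against the $\e$-weighted top-order norm is needed for this inequality. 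The $\e$-weighted vorticity term and the interpolation $\e|\kappa_+|^2_{H^{3k-3/2}}\lesssim 1+E$ enter only at the next level, in the proof of the weighted bound \eqref{venergy-}.

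Apart from these, the origin of the quadratic factor $(1+E+E_0)^2$ in \eqref{venergy-} — the product of $|v|^2_{H^{3k-1}}$ with $|\kappa_+|^2_{H^{3k-3/2}}$ coming from the boundary trace via \eqref{stimaNPI} — should be identified explicitly rather than absorbed into a vague commutator remark.
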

\nl
Using the above proposition we will prove
\begin{teo}[\bf Energy Estimates]
\label{teoEE}
Let $3k > \frac{n}{2} + 2$ and initial data\footnote{
The regularity of hypersurfaces in $\R^n$ is intended in the sense of local coordinates:
an hypersurface is $H^s$ for $s > \frac{n}{2} + 1$ if it can be locally represented
as the graph of $H^s$-functions.
} $S_0 \in H^{3k}$ and $v_0 \in H^{3k} (\Omega_0)$ be given.
Denote by
\begin{equation*}
S_t \in H^{3k} \hskip8pt \mbox{and} 
   \hskip8pt v(t, \cdot) \in C \left( H^{3k} (\R^n \smallsetminus S_t) \right) \, ,
\end{equation*}
the corresponding solution of \eqref{E}-\eqref{BC}.
Then, there exists $L > 0$ and a time $t^\star > 0$, depending only on 
${|v(0, \cdot)|}_{H^{3k} (\R^n \smallsetminus S_t)}$, $\Lambda_0$ and $L$,
such that $S_t \in \Lambda_0$ and ${| \kappa |}_{H^{3k - 5/2} (S_t) } \leq L$ for all $0 \leq t \leq t^\star$.
Moreover, assuming the Raileigh-Taylor sign condition \eqref{RT} and
\begin{equation}
\label{condition}
%\rho_- \leq \e^\exp
\relation \, ,
\end{equation}
%there exists $\e_0$ depending only on $\Lambda_0$ and the initial data such that for any $\e \leq \e_0$
the following energy estimate holds for $0 \leq t \leq t^\star$:
\begin{equation}
\label{energyestimate}
E ( S_t, v(t, \cdot) )  \leq 3 E ( S_0, v(0, \cdot) ) + C_1 + \int_0^t P (E_0, E(S_{t^\p}, v(t^\p, \cdot) )) \, dt^\p
\end{equation}
where $P$ is a polynomial with positive coefficients determined only by the set $\Lambda_0$,
and the constant $C_1$  depends only on $\Lambda_0$
and the $H^{3k - \frac{3}{2}} (\R^n \smallsetminus S_0)$-norm of $v_0$.
In particular, there exists a small time $T^\infty > 0$ 
and a constant $C_0$, depending only on the initial data and the set $\Lambda_0$, such that %for any $\e \leq \e_0$
\begin{equation}
\label{unifEbound}
\sup_{t \in [0,T^\infty]} E ( S_t, v(t, \cdot) )  \leq C_0  \,  .
\end{equation} 
\end{teo}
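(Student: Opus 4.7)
The plan is to prove \eqref{energyestimate} by differentiating $E(S_t, v(t,\cdot))$ term by term, using the geometric formulas of Section \ref{secgeometry}, so that the dangerous unbounded contributions cancel and what remains is bounded via Proposition \ref{teoVS1}. The novelty relative to the one-phase analysis of \cite{shatah1} is the appearance of the Kelvin--Helmholtz operator $\bar{\mathscr{R}}_0$: it is the leading-order part of the sectional curvature of $\G$, it has the wrong sign, and its coefficient carries the factor $(\rho_+\rho_-)^{-1}$, which degenerates as $\rho_- \rightarrow 0$. The hypothesis $\relation$ in \eqref{condition} is exactly what is needed to absorb this term into the surface-tension energy $E_2$.

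The first assertion, that $S_t \in \Lambda_0$ and ${|\k|}_{H^{3k-5/2}(S_t)} \leq L$ on some interval $[0, t^\star]$, follows from the Shatah--Zeng local well-posedness \cite{shatah3} combined with a standard continuity argument: the flow of \eqref{E}-\eqref{BC} is $H^{3k}$-continuous in time, so the open conditions defining $\Lambda_0 = \Lambda(S_0, 3k - \tfrac{1}{2}, \delta, L)$ persist for a short time depending only on ${|v(0,\cdot)|}_{H^{3k}}$, on $\Lambda_0$, and on $L$.

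For the energy inequality I would differentiate the four summands of $E$ separately. The vorticity pieces obey $\mD_t \o_\pm = (\o_\pm \cdot \nabla) v_\pm$, and commuting with derivatives of order $3k-1$ produces contributions bounded by $P(E_0, E)$ through \eqref{venergy+}--\eqref{venergy-}. Differentiating $E_{RT}$ brings out $\mD_{t_+}(-\nabla_{N_+} p^\star_{v,v})$, which by elliptic regularity for \eqref{pwaterwave} together with the sign condition \eqref{RT} is also controlled by $P(E_0, E)$. The core computation is $\frac{d}{dt}(E_1 + E_2)$: using Lemma \ref{lemD_t^2}, I would rewrite $\mD_{t_+}^2 \k_+$ as a leading third-order surface-tension piece of size $\e^2(-\Delta_{S_t}\bar{\mathcal{N}})\k_+$, a curvature piece involving $\bar{\mathscr{R}}_0$, a Rayleigh--Taylor piece, and lower-order remainders; integration by parts on $S_t$ combined with the self-adjointness of $\bar{\mathcal{N}}$ and of $-\Delta_{S_t}$ then produces the standard kinetic-potential cancellation between $\frac{d}{dt}E_1$ and $\frac{d}{dt}E_2$, while the Rayleigh--Taylor piece is absorbed into $\frac{d}{dt}E_{RT}$ modulo remainders of order $P(E_0, E)$.

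The main obstacle is the Kelvin--Helmholtz remainder, schematically
\[
\int_{S_t} \bar{\mathcal{N}} {(-\Delta_{S_t}\bar{\mathcal{N}})}^{2k-2} \mD_{t_+} \k_+ \cdot \mathscr{R}_0(u)(\bar{v}) \k_+ \, dS,
\]
whose $(\rho_+ \rho_-)^{-1}$ prefactor is singular as $\rho_- \rightarrow 0$. To tame it I would redistribute derivatives by integration by parts on $S_t$, apply Cauchy--Schwarz to split it as $E_1^{1/2}$ times a factor that misses the regularity of $\e^{-1} E_2^{1/2}$ by half a derivative in $\k_+$, and bound the remaining velocity-difference $|v_+^\top - v_-^\top|$ through Proposition \ref{teoVS1}. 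Tracking how the $\rho_-^{-1}$ loss from $\mathcal{N}^{-1}$ combines with the powers of $\e$ available from $E_2$, the assumption $\relation$ yields a net small coefficient and renders this term absorbable into $E_1 + E_2 + P(E_0, E)$. Putting everything together gives the differential inequality $\frac{d}{dt} E \leq P(E_0, E)$, from which \eqref{energyestimate} follows by integration, and \eqref{unifEbound} is then a consequence of short-time existence for the ODE majorant started at the initial value $E(S_0, v(0,\cdot))$.
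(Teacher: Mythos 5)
Your overall philosophy matches the paper (differentiate each term of $E$, use Lemma \ref{lemD_t^2} and the geometric cancellations, isolate the Kelvin--Helmholtz contribution and control it via $\rho_- \leq \e^{3/2}$), but there is a genuine gap in the final step: you cannot obtain the pointwise differential inequality $\frac{d}{dt}E \leq P(E_0,E)$, and the Cauchy--Schwarz absorption you sketch does not close.

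The Kelvin--Helmholtz remainder, after reduction, contains the term (schematically) $\rho_-\int_{S_t}\nabla_{v^\top}\nabla_{v^\top}\kappa_+\cdot\bar{\N}(-\Delta_{S_t}\bar{\N})^{2k-2}\mD_{t_+}\kappa_+\,dS$. To estimate it by Cauchy--Schwarz against $E_1^{1/2}$ one would need to control $\kappa_+$ in $H^{3k-1/2}(S_t)$, which exceeds what $\e^{-1}E_2^{1/2}$ provides (i.e. $\kappa_+\in H^{3k-1}$) by \emph{half a derivative above}, not below; integrating the tangential derivatives by parts transfers the deficit to $\nabla_{v^\top}\mD_{t_+}\kappa_+\in H^{3k-7/2}$, which again is not energy-controlled. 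No choice of $\rho_-$ in terms of $\e$ repairs this, because the obstruction is a derivative mismatch, not a coefficient size. The paper's resolution is structural: after integration by parts and commutation the remainder is recognized as a \emph{total time derivative} of the extra energy
\begin{equation*}
E_{\ex}=-\frac{\rho_-}{2(\rho_++\rho_-)}\int_{S_t}\nabla_{v_+^\top-v_-^\top}\kappa_+\cdot\bar{\N}(-\Delta_{S_t}\bar{\N})^{2k-2}\nabla_{v_+^\top-v_-^\top}\kappa_+\,dS ,
\end{equation*}
so that only $\left|\frac{d}{dt}(E-E_{\ex})\right|\leq Q$ holds, not $\left|\frac{d}{dt}E\right|\leq Q$. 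One then \emph{integrates first} and estimates $E_{\ex}(t)$ pointwise in time: interpolating $\kappa_+$ between $H^{3k-5/2}$ (controlled by $\Lambda_0$) and $H^{3k-1}$ (controlled by $\e^{-1}E^{1/2}$) yields $|E_{\ex}|\leq C\rho_-\e^{-4/3}E^{2/3}|v|^2_{H^{3k-2}}$, and the condition $\rho_-\leq\e^{3/2}$ makes the coefficient small enough to absorb $\tfrac12 E$. This is precisely why \eqref{energyestimate} has the prefactor $3E(S_0,v(0,\cdot))$ rather than $E(S_0,v(0,\cdot))$; a direct Gronwall argument as you propose would produce the factor $1$, and the discrepancy signals the missing idea. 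Your proof also needs a separate step showing that $|v|^6_{H^{3k-2}}$ stays close to its initial value along the flow (via \eqref{estimateD_tv}), since otherwise the absorption constant is not purely in terms of initial data. Finally, for the first assertion on $S_t\in\Lambda_0$, the paper derives the bound on $|\kappa|_{H^{3k-5/2}}$ quantitatively from the Lagrangian map estimates \eqref{t_1} and \eqref{estklow} rather than abstract continuity of the flow; your appeal to local well-posedness is morally fine but should be made quantitative.
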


%Since the domain $\Omega_t$ is moving with time 
%the continuity of $v$ must be intended in the following sense:
%$v (t, \cdot) \in H^l ( \R^n \minus \Omega_t)$ for any $t \in [0, t^\star]$ 
%and there exists a fixed reference domain $\D$ with $H^l$-maps $\Phi_t : \D \rightarrow \Omega_t$
%such that $v \circ \Phi_t$, defined on $[0, t^\star] \times \D$, is continuous in time with values in $H^l (\D)$.
%We will choose $\D = \Omega_0$ and $\Phi_t$ to be the Lagrangian coordinate map associated to the
%velocity field $v$. 
%solving \eqref{lagrangianmap}.

\nl
Before turning to the proofs of the above statements we make the following remarks:

\begin{enumerate}

\item In the same spirit of \cite{shatah1,shatah2} the construction of the energy \eqref{Energy} is based 
on an evolution equation for $\mD_{t_+} \k_+$; see \eqref{D_t^2kappa}.

\item 
Proposition \ref{teoVS1} is the analogous of 
proposition 4.3 in \cite{shatah1} (one fluid problem with vanishing surface tension)
and proposition 4.3 in \cite{shatah2} (interface problem).
%The idea of the proof given here is similar and is based upon the control of vector fields \eqref{vectorcontrol}.
Since our energy is based exclusively on $v_+$, 
and we cannot take full advantage of the presence of surface tension - its highest Sobolev norm being not uniformly controlled -
we can only establish the weighted weaker control \eqref{venergy-} on $v_-$.
%Such control requires some modification of the arguments presented in \cite{shatah2}.
Under condition \eqref{condition} this %weaker control 
turns out to be still sufficient to obtain uniform energy estimates.

\item
Theorem \ref{teoEE} is the analogous of theorem 4.4 in \cite{shatah1} and theorem 4.5 in \cite{shatah2}.
The proof uses essentially the same techniques.
%We point out that in \eqref{condition} any exponent greater than $\frac{20}{9}$ would work,
%as it will be clear from the arguments in \ref{secenergyinequality}.

\item 
{\it Convergence of solutions}. An immediate corollary of the uniform energy estimates 
provided by theorem \ref{teoEE} is weak-star convergence
of solutions of \eqref{E}-\eqref{BC}
with outer density and surface tension tending to zero,
to solutions of the water wave problem for one fluid in vacuum
without surface tension \eqref{E_0}-\eqref{BC_0}.
Weak convergence in a larger Sobolev space can also be obtained easily in Lagrangian coordinate,
writing the integral equation for \eqref{E}-\eqref{BC} 
and passing to the limit using standard Rellich compactness.

\item
{\it The case $\e = 1$}. In the case of constant surface tension's strength
we recover the result obtained in \cite{CoutShko3} and independently by the author in \cite{VS}.

\item
\label{corconv}
Using the non-linear Eulerian framework introduced in \cite{shatah1,shatah2} 
it is not hard to obtain compactness in time for solutions of \eqref{E}-\eqref{BC}
and therefore strong convergence
to solutions of \eqref{E_0}-\eqref{BC_0}.
A more precise statement is the following:

\begin{cor}[\bf{Convergence of solutions}].
\label{corconvergence}
Let an initial hypersurface $S_0 \in H^{3k}$ and an initial velocity field $v_0 \in H^{3k} (\Omega_0)$ 
be given for some integer $k$ with $3k > \frac{n}{2} + 2$.
Consider any sequence of local-in-time solutions
\begin{equation}
S_t^m \in C ( [0,T] ; H^{3k} )
\hskip 8pt , \hskip10pt
v^m \in C ( [0,T] ; H^{3k} (\Omega_t^m) ) 
\end{equation}
of (\ref{E})-(\ref{BC}) 
corresponding to densities $\rho^m = \rho_+  \chi_{\Omega_t^+}   +   \rho_-^m \chi_{\Omega_t^-}$
and surface tension's strength $\e^2_m$. 
Let $u^m$ be the Lagrangian map corresponding to the velocity field $v^m$
and suppose that $\rho^m_- , \e_m  \rightarrow 0$ as $m \rightarrow \infty$
under the constraint $\rho_-^m \leq \e_m^{3/2}$.  %$\rho_-^m = o(\e_m^2)$.
Then there exist a small positive time $T^\infty$,
a map $u^\infty$,
%$\in C ( [0,T^\infty], H^{3k} (\Omega_0) )$ 
and a vector field $v^\infty$
%$\in L^{\infty} (H^{3k^\p} ( u_+ (\Omega_0) )$ ,
such that the following is true for any $k^\p < k$:
\begin{eqnarray*}
& 1) & \lim_{m \rightarrow \infty} u_+^m  =  u^\infty  \hskip 10pt
						\mbox{in} \hskip12pt C \left( [0, T^\infty] ; H^{3k} (\Omega_0^+) \right)
\nonumber
\\
&   &  \lim_{m \rightarrow \infty} v_+^m \circ u_+^m  =  v^\infty \circ u^\infty \hskip 10pt
						\mbox{in} \hskip12pt C \left( [0, T^\infty] ; H^{3k^\p} (\Omega_0^+) \right)
\nonumber
\\
& 2) & 	S_t^\infty :=  \partial \Omega_t^\infty := \partial u^\infty (t , \Omega_0) \in H^{3k^\p} 
\nonumber
\\
& 3) &  \mbox{ $(v^\infty , S_t^\infty)$  are a strong (pointwise) solution of \eqref{E_0}-\eqref{BC_0} 
						for $t \in [0,T^\infty] \, $.}
\end{eqnarray*}
\end{cor}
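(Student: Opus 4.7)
The plan is to apply Theorem \ref{teoEE} to each solution $(S_t^m, v^m)$, extract compactness on a common time interval, and pass to the limit in the Lagrangian formulation of \eqref{E}-\eqref{BC} restricted to the inner domain $\Omega_0^+$. Under the hypothesis $\rho_-^m \leq \e_m^{3/2}$, Theorem \ref{teoEE} provides a common existence time $T^\infty > 0$ and $\sup_{t\in[0,T^\infty]} E(S_t^m, v^m) \leq C$ with $C$ independent of $m$. By Proposition \ref{teoVS1} this translates into uniform bounds on $v_+^m$ in $C([0,T^\infty]; H^{3k}(\Omega_t^{+,m}))$, on $\k_+^m$ in $C([0,T^\infty]; H^{3k-2}(S_t^m))$ and on $v_-^m$ in $C([0,T^\infty]; H^{3k-1}(\Omega_t^{-,m}))$, and the uniform inclusion $S_t^m \in \Lambda_0$.

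\textbf{Compactness in Lagrangian coordinates.} The flow map on $\Omega_0^+$ solves $\partial_t u_+^m = v_+^m \circ u_+^m$ with $u_+^m(0) = \id$, and hence is uniformly bounded in $C^1([0,T^\infty]; H^{3k}(\Omega_0^+))$ by standard composition estimates in Sobolev spaces. Arzel\`a--Ascoli combined with Rellich compactness then yields a subsequence with $u_+^m \to u^\infty$ in $C([0,T^\infty]; H^{3k^\p}(\Omega_0^+))$ for every $k^\p < k$. Writing the Eulerian momentum equation as $\mD_{t_+} v_+^m = -\nabla p_+^m/\rho_+$ and using \eqref{physicalp} together with the bounds of Step~1 gives a uniform bound on $\partial_t(v_+^m \circ u_+^m)$ in a weaker Sobolev space, so a further extraction produces $v_+^m \circ u_+^m \to V^\infty$ in $C([0,T^\infty]; H^{3k^\p}(\Omega_0^+))$. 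Since $3k^\p > \frac{n}{2} + 1$ for $k^\p$ close enough to $k$, $u^\infty(t,\cdot)$ is a diffeomorphism of $\Omega_0^+$ onto $\Omega_t^\infty := u^\infty(t, \Omega_0^+)$, so defining $v^\infty$ on $\Omega_t^\infty$ by $v^\infty \circ u^\infty = V^\infty$ yields $S_t^\infty = \partial \Omega_t^\infty \in H^{3k^\p}$.

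\textbf{Identification of the limiting equation and main obstacle.} To pass to the limit in the Lagrangian integral equation
\begin{equation*}
\rho_+ (v_+^m \circ u_+^m)(t) = \rho_+ v_0^+ - \int_0^t (\nabla p_+^m) \circ u_+^m \, ds
\end{equation*}
it suffices to identify the limit of $p_+^m$. The interior equation $-\Delta p_+^m = \rho_+ \tr(Dv_+^m)^2$ is stable under the convergence in Step~2, so one must control the boundary trace. By \eqref{physicalp}, $p_+^m|_{S_t^m}$ equals $\N^{-1}$ applied to a sum of terms, the leading one being $-(\e_m^2/\rho_-^m) \N_- \k_-^m$, and the remaining ones uniformly bounded in an appropriate Sobolev norm. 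The main technical obstacle is to make rigorous, uniformly in $m$ and on interfaces $S_t^m$ that converge only in $H^{3k^\p}$, the formal expansion $\N^{-1} \sim \rho_-^m \N_-^{-1}$ as $\rho_-^m \to 0$ suggested by $\N = \N_+/\rho_+ + \N_-/\rho_-^m$. Once this is done, the surface-tension contribution reduces to order $\e_m^2$ and the remaining terms are $O(\rho_-^m)$; under $\rho_-^m \leq \e_m^{3/2}$ both vanish, so $p_+^m|_{S_t^m} \to 0$ and the Dirichlet problem for $p_+^m$ converges to the one solving \eqref{BC_0}. The required uniform operator-norm estimates for $\N^{-1}$ on the family $\{S_t^m\} \subset \Lambda_0$ are available from the elliptic theory recalled in the appendix. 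Differentiating the limiting integral equation then yields that $(v^\infty, S_t^\infty)$ is a pointwise solution of \eqref{E_0}-\eqref{BC_0} on $[0,T^\infty]$.
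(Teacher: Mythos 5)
Your proposal follows essentially the same route as the paper: extract uniform bounds from Theorem~\ref{teoEE} via Proposition~\ref{teoVS1}, use Ascoli--Arzel\`a in Lagrangian coordinates to obtain the limits $u^\infty$ and $v^\infty$, and pass to the limit in the pressure via the elliptic bound $\N^{-1}=O(\rho_-)$ from Lemma~\ref{lemmaoperator2}, which is precisely what the paper invokes (\eqref{estimateN-1}) together with \eqref{physicalp} and \eqref{unifbounds} to show $|p_-|_{L^\infty H^{3k-2}(S_t)} \rightarrow 0$. The one place where you and the paper diverge is the very last step: you propose to pass to the limit in the Lagrangian integral equation and then differentiate in time, while the paper instead establishes a uniform bound on $\partial_t^2(v_+\circ u_+)$ in $L^\infty H^{3k-3}$ (equivalently on $\mD_{t_+}p_+$, using the commutator estimates of Lemma~\ref{lemcomm}) so that $\partial_t(v_+^m\circ u_+^m)$ converges strongly and the pointwise equation is inherited directly. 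These two paths are essentially equivalent, and in fact your integral-equation route is exactly the one the paper foreshadows in remark~\ref{corconv} above the corollary; to close it rigorously you still need the same uniform control of the pressure in time (equicontinuity of the integrand $(\nabla p_+^m)\circ u_+^m$), which is the content of the paper's bound on $\mD_{t_+}p_+$. Noting this would tighten the final sentence of your proposal.
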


\end{enumerate}

%%%%%%%%%%%%%%%%%%%%%%%%%
%%%%%%%%%%%%%%%%%%%%%%%%%
%%%%%%%%%%%%%%%%%%%%%%%%%
%
% PROOFS
%
%%%%%%%%%%%%%%%%%%%%%%%%%
%%%%%%%%%%%%%%%%%%%%%%%%%
%%%%%%%%%%%%%%%%%%%%%%%%%

\section{Proofs of the statements}
\label{secproof}

\subsection{Preliminary Estimates}
\label{secpreliminaries}

%%%%%%%%%%%%%%%%%%%%%%%%%%
% ESTIMATES ABOUT PRESSURE
%%%%%%%%%%%%%%%%%%%%%%%%%%

Let us denote by $Q$ any generic polynomial 
with positive coefficients (depending on the set $\Lambda_0$), independent of $\rho_-$ and $\e$,
whose arguments are quantities that will be bounded by the energy through proposition \ref{teoVS1}, i.e.,
\begin{equation}
\label{Q}
Q = Q \left( {|v_+|}_{H^{3k} (\Omega_t^+)}, \sqrt{\e} {|v_-|}_{H^{3k} (\Omega_t^-)}, {|v_-|}_{H^{3k-1} (\Omega_t^-)} ,
\e {|\k_+|}_{H^{3k - 1} (S_t)}, {|\k|}_{H^{3k-2} (S_t)} \right)   \,  .
\end{equation}
From \eqref{stimaNPI}, trace estimates, and interpolation of Sobolev norms, the
following quantities can also be bounded by  $Q$: 
\begin{align*} 
& {|\Pi_\pm|}_{H^{3k-2} (S_t) } \hskip4pt , \hskip6pt 
{|N_\pm|}_{H^{3k-1} (S_t) }  \hskip4pt , \hskip6pt 
\sqrt{\e} {|\k_+|}_{H^{3k-\frac{3}{2}} (S_t)} \hskip4pt , \hskip6pt 
\\
& {|v_+^\top|}_{ H^{3k-\frac{1}{2}} (S_t) }  \hskip4pt , \hskip6pt  
\sqrt{\e} {|v_-^\top|}_{H^{3k-\frac{1}{2}} (S_t) } \, .
\end{align*}

\vskip5pt
\begin{lem}[\bf{Estimates for the pressure}]
\label{lemp}
Let $p_\k$ be defined by \eqref{Sp}.
There exists a positive constant $C$, depending only on the set of hypersurfaces $\Lambda_0$, such that
\begin{align}
\label{estimatep_k}
{| \nabla p_\k |}_{H^{3k - \frac{5}{2}} (\R^n \minus S_t)}  & \leq  % C  {| \k |}_{H^s (S_t)} \leq 
						Q  
%\hskip7pt ,  \hskip10pt 0 \leq  s \leq  3k -2
\\
\label{estimateep_k}
\e {| \nabla p_\k |}_{H^{3k - \frac{3}{2}} (\R^n \minus S_t)}  & \leq  % C  \e  {| \k |}_{H^s (S_t)} \leq 
Q
%\hskip7pt ,  \hskip10pt 0 \leq  s \leq  3k - 1 
\, .
\end{align}
Let $p^\star_{v,v}$ and $p_{v,v}$  be defined respectively by \eqref{pwaterwave} and \eqref{p_vw}.
%Under the assumption  $\rho_- \leq \e^{2/3}$  we have
Then
\begin{eqnarray}
\label{estimatep^star_vv}
& & {| \nabla_{\H_+ N_+} p^\star_{v,v} |}_{ H^{3k - \frac{1}{2}} (\Omega_t^+) }  +
			{| D^2 p^\star_{v,v} |}_{ H^{3k - \frac{3}{2}} (\Omega_t^+) }  \leq Q \, ,
%\\ \nonumber
\\
\label{estimatep^S}
& & {| \nabla \H_\pm p^S_{v,v} |}_{H^{3k-\frac{3}{2}} (\Omega_t^\pm)}  \leq Q \, ,
%\hskip7pt , \hskip10pt  1/2 \leq s \leq 3k - 1
%\\ \nonumber
\\
\label{estimatep_vv}
& &  {| \nabla p_{v,v}^+ |}_{H^{3k - \frac{3}{2}} (\Omega_t^+)} \hskip7pt ,\hskip10pt
%\e^{2/3}
{| \nabla p_{v,v}^- |}_{H^{3k - \frac{3}{2}} (\Omega_t^-)}  \leq  Q  
%\hskip7pt , \hskip10pt 0 \leq s \leq 3k - 3/2
\, .
\end{eqnarray}
and, as a consequence,
\begin{eqnarray}
\label{estimateD_tv}
%\e^{2/3} 
{| \mD_t v |}_{H^{3k - \frac{3}{2}} (\R^n \minus S_t)}    \leq  Q   \,  .
\end{eqnarray}
\end{lem}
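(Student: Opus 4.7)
The plan is to treat all the pressure estimates by a unified strategy: each of the pressures $p^\star_{v,v}$, $p_{v,v}^\pm$, $p^S_{v,v}$, and $p_\k$ is either a harmonic extension of a boundary datum or the solution of an inhomogeneous Dirichlet problem for the Laplacian. Accordingly, the proof reduces to (i) bounding the bulk source in a suitable Sobolev norm, (ii) bounding the Dirichlet datum in a Sobolev norm on $S_t$, and (iii) invoking standard elliptic regularity together with the mapping properties of $\H_\pm$, $\N_\pm$, and $\N^{-1}$ collected in the appendix bounds (\ref{stimaNPI}). All constants end up depending only on $\Lambda_0$ and on norms already absorbed into the polynomial $Q$.

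For (\ref{estimatep^star_vv}) I would apply elliptic regularity directly to the Dirichlet problem (\ref{pwaterwave}) in $\Omega_t^+$: the $H^{3k-3/2}$ bound on $D^2 p^\star_{v,v}$ follows from an $H^{3k-3/2}$ bound on $\tr {(Dv_+)}^2$, handled by a Moser product inequality (available since $3k-3/2 > n/2$), with the geometry-dependent constants absorbed into $Q$; the conormal derivative at the boundary then follows by the trace theorem. For (\ref{estimatep^S}) I would treat each of the four terms inside the curly brace in (\ref{p_vw}) separately: the two curvature terms on $S_t$ are controlled in $H^{3k-3/2}(S_t)$ via trace estimates on $v_\pm$, product inequalities, and the $|\Pi_\pm|_{H^{3k-2}}$ bound contained in $Q$; the two bulk terms $\nabla_{N_\pm} \Delta_\pm^{-1} \tr {(Dv)}^2$ are controlled by the same elliptic regularity used for $p^\star_{v,v}$ composed with a trace. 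Composition with $\N^{-1}$ then gains one derivative uniformly in $\rho_-$ thanks to (\ref{stimaNPI}), and the harmonic extension estimate yields (\ref{estimatep^S}); combined with elliptic regularity for (\ref{p_vw}), this gives (\ref{estimatep_vv}).

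For (\ref{estimatep_k}) I would expand using (\ref{Sp}): $\N_\mp$ costs one derivative, $\N^{-1}$ recovers it uniformly in $\rho_-$, and the harmonic extension followed by one gradient produces the claimed $H^{3k-5/2}$ regularity using only the $|\k|_{H^{3k-2}}$ piece of $Q$. The second estimate (\ref{estimateep_k}) trades the factor $\e$ on the left for an extra half-derivative on the right by exploiting the bound $\e|\k_+|_{H^{3k-1}} \leq Q$. Finally, (\ref{estimateD_tv}) follows at once from the equation of motion (\ref{Ematerial}) together with (\ref{estimatep_vv}) and (\ref{estimateep_k}), after observing that $\e^2 |\nabla p_\k|_{H^{3k-3/2}} \leq \e \cdot Q$ since $\e \leq 1$.

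The main technical obstacle is the careful tracking of the $\rho_-$-dependence inside the composition $\N^{-1} \N_\mp$ and inside the apparently singular prefactor $1/(\rho_+\rho_-)$ appearing in (\ref{Sp}) and (\ref{p_vw}): uniformity as $\rho_- \to 0$ requires both that the appendix bounds (\ref{stimaNPI}) hold with constants independent of $\rho_-$ and that the singular $\rho_-^{-1}$ factor be absorbed against the asymptotic behaviour $\N \sim \N_-/\rho_-$, which makes $\N^{-1}$ effectively of size $O(\rho_-)$ on the range of $\N_\mp$. Once this bookkeeping is organized, the remaining steps reduce to standard Moser-type product inequalities on $S_t$ and on $\Omega_t^\pm$ combined with elliptic regularity with Dirichlet data.
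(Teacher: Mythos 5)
Your proposal follows essentially the same route as the paper: \eqref{estimatep_k}--\eqref{estimateep_k} from the definition of $p_\k$ and the operator bounds in the appendix, \eqref{estimatep^star_vv} via elliptic regularity (the paper cites \cite{shatah1}, Lemma~4.8), \eqref{estimatep^S} by controlling $a$ on $S_t$ and using the $O(\rho_-)$ bound on $\N^{-1}$ to absorb the singular $\rho_\pm^{-1}$ prefactors, the decomposition $\nabla p_{v,v}^\pm = -\nabla\Delta_\pm^{-1}\tr(Dv)^2 + \rho_\pm^{-1}\nabla\H_\pm p^S_{v,v}$ for \eqref{estimatep_vv}, and finally \eqref{Ematerial} for \eqref{estimateD_tv}. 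One correction: the mapping properties of $\H_\pm$, $\N_\pm$, and $\N^{-1}$ that you invoke are \eqref{estimateH}, \eqref{estimateN0}, and \eqref{estimateN-1} in Lemmas \ref{lemmaoperator1}--\ref{lemmaoperator2}, not \eqref{stimaNPI} (which is the geometric bound on $\Pi$ and $N$ in terms of $\k$); relatedly, the phrase ``$\N^{-1}$ recovers it uniformly in $\rho_-$'' is misleading, since what actually makes the composition uniform is precisely that $\N^{-1}$ is $O(\rho_-)$ and that this factor cancels the explicit $1/(\rho_+\rho_-)$ (or $1/\rho_\pm$) in the definitions, as you correctly explain in your last paragraph.
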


\begin{proof}
The first two estimates follow by the definition of $p_\k$ and lemma \ref{lemmaoperator1} and \ref{lemmaoperator2}.
\eqref{estimatep^star_vv} is proved in \cite[lemma 4.8]{shatah1}.
Using the explicit expression for $p^S_{v,v}$ in \eqref{p_vw}, 
\eqref{stimaNPI}, and again lemma \ref{lemmaoperator1} and \ref{lemmaoperator2} 
together with product Sobolev-estimates\footnote{
An estimate we use several times throughout our proofs is
\begin{equation*}
{|f g|}_{ H^{s_1} (S) } \leq C {|f|}_{ H^{s_1} (S) } {|g|}_{ H^{s_2} (S) }
\end{equation*}
for $s_2 \geq s_1$, $s_2 > (n-1)/2$, and $s_1 + s_2 \geq 0$.
%We remark that the constant $C$ is independent of $S \in \Lambda_0$.
%This is shown in \cite[Appendix A]{shatah1} by constructing suitable local coordinates
%for hypersurfaces belonging to $\Lambda_0$.
}, 
we see that 
for any $0 \leq s \leq 3k - 1$
\begin{align}
\nonumber
{| \nabla \H_\pm p^S_{v,v} |}_{H^{s} (\Omega^\pm_t)}  \leq 
		C \rho_- {| a |}_{H^{s - \frac{1}{2}} (S_t)}
		& \leq
		C \rho_-  \left( 1 + {|\k_+|}_{H^{s - \frac{1}{2}} (S_t)}  \right)  
		{| v |}^2_{H^{3k-1} (\R^n \minus S_t)}  
		%\\  \nonumber
		\\
		& +
		\label{estimatep^S1}
		C \rho_- {| v_+^\bot |}^2_{H^{3k - \frac{1}{2}} (S_t)} {| v |}_{H^{3k-1} (\R^n \minus S_t)}
		\, .
%\hskip7pt ,  \hskip10pt  0 \leq s \leq 3k - 1
\end{align}
%Since we are assuming $\rho_- \leq \e$ 
This proves \eqref{estimatep^S}.
Using the identity $f_\pm = \Delta^{-1}_\pm \Delta f_\pm  +   \left. \H_\pm  f_\pm \right|_{S_t}$,
we can write
\begin{equation*}
\nabla p_{v,v}^\pm   =   - \nabla \Delta^{-1}_\pm \tr{(Dv)}^2  +  \frac{1}{\rho_\pm} \nabla \H_\pm p^S_{v,v}  \,  ,
\end{equation*}
so that \eqref{estimatep^S1} implies \eqref{estimatep_vv}.
To conclude we notice that \eqref{estimateD_tv} 
follows directly from \eqref{Ematerial}, \eqref{estimateep_k} and  \eqref{estimatep_vv}
\end{proof}

\vskip5pt
\begin{lem}
\label{derivationRT}
Let $p^\star_{v,v}$ be defined by \eqref{pwaterwave}, then
\begin{equation*}
{|  N_+ \cdot \Delta_{S_t} \nabla p^\star_{v,v}	-  \nabla_{N_+} p^\star_{v,v} \N_+ \k_+  |}_{H^{3k - \frac{5}{2}} (S_t)} \leq Q
\end{equation*}
\end{lem}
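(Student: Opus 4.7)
The strategy is to reduce the bound to a scalar identity on $S_t$ for $q := \nabla_{N_+} p^\star_{v,v}$, and then extract the required cancellation from the Dirichlet problem \eqref{pwaterwave}. Since $p^\star_{v,v}$ vanishes on $S_t$, its tangential gradient vanishes there, so $\nabla p^\star_{v,v}\vert_{S_t} = q\,N_+$, and \eqref{estimatep^star_vv} together with the trace theorem yields ${|q|}_{H^{3k-1}(S_t)} \leq Q$.

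Apply $\Delta_{S_t}$ componentwise to the identity $\nabla p^\star_{v,v}\vert_{S_t} = q N_+$. Two consequences of $|N_+|^2 \equiv 1$ will be used: $N_+\cdot\nabla^\top N_+ = 0$ and $N_+ \cdot \Delta_{S_t} N_+ = -|\nabla^\top N_+|^2 = -|\Pi_+|^2$. Combined with the Leibniz rule they give
\begin{equation*}
N_+ \cdot \Delta_{S_t}\nabla p^\star_{v,v} \;=\; \Delta_{S_t} q \,-\, q\,|\Pi_+|^2.
\end{equation*}
By ${|\Pi_+|}_{H^{3k-2}(S_t)} \leq Q$ and product Sobolev estimates, the term $q\,|\Pi_+|^2$ is controlled in $H^{3k-2}(S_t) \hookrightarrow H^{3k-5/2}(S_t)$ by $Q$. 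The lemma therefore reduces to the bound
\begin{equation*}
{\left| \Delta_{S_t} q - q\,\N_+ \k_+ \right|}_{H^{3k-5/2}(S_t)} \;\leq\; Q.
\end{equation*}

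To establish this, the plan is to extend $N_+$ in a neighborhood of $S_t$ as $\nabla r$ where $r$ is the signed distance to $S_t$, and use the splitting $\Delta = \partial_{N_+}^{\,2} + \Delta_{S_t} - \k_+\partial_{N_+}$ valid on $S_t$. Applied to $p^\star_{v,v}$ with the boundary condition it gives $\partial_{N_+}^{\,2} p^\star_{v,v}\vert_{S_t} = \k_+ q - \tr(Dv)^2\vert_{S_t}$; applied to $\partial_{N_+} p^\star_{v,v}$, and combined with the representation $\N_+\k_+ = \partial_{N_+}\H_+\k_+\vert_{S_t}$, the commutator $[\partial_{N_+},\Delta]$, and the Weingarten relation for $\nabla^\top N_+$, one should be able to write $\Delta_{S_t} q - q\,\N_+ \k_+$ as a finite sum of products of $v$, $\k_+$, $\Pi_+$, and $q$ (together with their lower-order derivatives), each term being controlled in $H^{3k-5/2}(S_t)$ by $Q$ via the product estimates and the Dirichlet--Neumann regularity recalled in lemmas~\ref{lemmaoperator1}--\ref{lemmaoperator2} of the appendix.

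\textbf{Main obstacle.} A priori $\Delta_{S_t} q$ and $q\,\N_+ \k_+$ both have regularity only $H^{3k-3}(S_t)$, one half-derivative below the target $H^{3k-5/2}(S_t)$. Exhibiting the cancellation between them is the technical heart of the lemma: it reflects the fact that, modulo lower-order terms, both quantities arise from a common third-order boundary operator acting on $p^\star_{v,v}$, a structure that must be extracted explicitly through the PDE and the mapping properties of $\N_+$.
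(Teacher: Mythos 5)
Your opening reduction is sound and matches the route the paper points to: since $p^\star_{v,v}\vert_{S_t}=0$, the boundary trace of $\nabla p^\star_{v,v}$ is the purely normal vector $qN_+$ with $q:=\nabla_{N_+}p^\star_{v,v}\in H^{3k-1}(S_t)$ by \eqref{estimatep^star_vv}; applying $\Delta_{S_t}$ componentwise and using $N_+\cdot\nabla^\top N_+=0$ gives $N_+\cdot\Delta_{S_t}(qN_+)=\Delta_{S_t}q+q\,N_+\cdot\Delta_{S_t}N_+$, and the last term is lower order. So the lemma does reduce to ${|\Delta_{S_t}q-q\,\N_+\k_+|}_{H^{3k-5/2}(S_t)}\leq Q$, a half-derivative above the a priori regularity of either piece.

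This is precisely where the lemma's content resides, and you leave it as an assertion. Your plan is to apply the Laplacian splitting to $p^\star_{v,v}$ and to $\partial_{N_+}p^\star_{v,v}$; but the former only yields $D^2p^\star_{v,v}(N_+,N_+)=-\tr(Dv)^2-\k_+q$ on $S_t$ (note the sign: the paper's splitting is $\Delta f=\Delta_{S_t}f+\k_+\nabla_{N_+}f+D^2f(N_+,N_+)$, so your $-\k_+\partial_{N_+}$ and the resulting $+\k_+q$ in $\partial_{N_+}^2p^\star_{v,v}$ have the wrong sign relative to the paper's conventions), and the latter produces $D^3p^\star_{v,v}(N_+,N_+,N_+)$ together with terms containing derivatives of the extended normal field. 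None of these outputs is manifestly $q\,\N_+\k_+$: the operator $\N_+$ acts through the harmonic extension $\H_+$, which does not appear anywhere in the pointwise splitting, so exhibiting the cancellation requires a separate identity relating third-order normal derivatives of a solution of the Dirichlet problem to the Dirichlet--Neumann operator applied to its Neumann data (together with $\N_+$ of the curvature). That identity is exactly what the paper defers to \cite[pp.\ 721--722]{shatah1}, and your proposal neither states it nor derives it. Until that step is made explicit, the half-derivative gain — and hence the lemma — is not established.
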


\nl
The proof of this lemma is based on the decomposition of the Laplacian on $S_t$:
$\Delta f = \Delta_{S_t} f   +  \k_+ \nabla_{N_+} f   +   D^2 f (N_+,N_+) $.
Details can be found in \cite[721-722]{shatah1}.
%
%
%
%
%
%%%%%%%%%%%%%%%%%%%%%%%
% EVOLUTION OF k_+
%%%%%%%%%%%%%%%%%%%%%%%
%
%
%
The following lemma is the key to our energy estimates and is 
the analogous for the two-phase problem of lemma 3.4 in \cite{shatah1}.
\begin{lem}
\label{lemD_t^2}
Let $S_t \in H^{3k}$, with $S_t \in \Lambda_0$, 
and $v \in H^{3k} (\R^n \minus S_t)$ be a solution to \eqref{E}-\eqref{BC},
then
\begin{equation}
\label{D_t^2kappa}
{\left| \mD_{t_+}^2 \k_+  - \e^2 \Delta_S \bar{\N} \kappa_+  - \frac{1}{\rho_+} \Delta_S \N_+ p^S_{v,v}
			- (\rho_+  +  \rho_-) \nabla_{N_+} p^\star_{v,v} \bar{\N} \k_+
			\right|}_{ H^{3k - \frac{5}{2}} (S_t) } \leq Q \, ,
\end{equation}
provided $\rho_- \leq \e$.
\end{lem}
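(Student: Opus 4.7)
The plan is to differentiate the kinematic evolution equation for $\k_+$ twice in time, substitute the Euler equations to eliminate $\mD_{t_+} v_+$ in favour of the pressure, and identify the leading terms. Starting from the geometric identity (two-phase version of the computation in \cite{shatah1}, lem 3.4)
\begin{equation*}
\mD_{t_+} \k_+ = - \Delta_{S_t} v_+^\bot + R_1,
\end{equation*}
where $R_1$ is a quadratic expression in $v_+^\top$, $\Pi_+$ and $N_+$ that is controlled in $H^{3k-5/2}(S_t)$ by $Q$ via Sobolev product estimates and the preliminary bounds listed after \eqref{Q}, one applies $\mD_{t_+}$ once more to obtain
\begin{equation*}
\mD_{t_+}^2 \k_+ = - \Delta_{S_t}\bigl( \mD_{t_+} v_+^\bot \bigr) + [\mD_{t_+}, \Delta_{S_t}]\, v_+^\bot + \mD_{t_+} R_1.
\end{equation*}
The task then reduces to computing $\mD_{t_+} v_+^\bot$ from Euler and analyzing the commutator.

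For the first task, using the Euler equation in the form $\mD_{t_+} v_+ = -\nabla p_{v,v}^+ - \e^2 \nabla p_\k^+$ (from the identification $p = \r(p_{v,v} + \e^2 p_\k)$ of section \ref{secgeometryG}), projecting on $N_+$, and absorbing the bounded contribution $v_+\cdot \mD_{t_+} N_+$, one obtains
\begin{equation*}
\mD_{t_+} v_+^\bot = - \nabla_{N_+} p_{v,v}^+ - \e^2 \nabla_{N_+} p_\k^+ + B_1, \qquad |B_1|_{H^{3k-3/2}(S_t)} \leq Q.
\end{equation*}
The decomposition $p_{v,v}^+ = -\Delta_+^{-1}\tr(Dv)^2 + \frac{1}{\r_+}\H_+ p^S_{v,v}$ together with $\nabla_{N_+}\H_+ f|_{S_t} = \N_+ f$ identifies $\nabla_{N_+} p_{v,v}^+|_{S_t} = \frac{1}{\r_+}\N_+ p^S_{v,v}$ modulo a $Q$-bounded term (via \eqref{estimatep_vv}), while the explicit formula \eqref{Sp} for $p_\k^+$ gives $\nabla_{N_+} p_\k^+|_{S_t} = \bar{\N}\k_+$. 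Applying $-\Delta_{S_t}$ then produces the two principal contributions $\frac{1}{\r_+}\Delta_{S_t}\N_+ p^S_{v,v}$ and $\e^2 \Delta_{S_t}\bar{\N}\k_+$ of \eqref{D_t^2kappa}.

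The Rayleigh-Taylor term $(\r_+ + \r_-)\nabla_{N_+} p^\star_{v,v}\,\bar{\N}\k_+$ arises from the commutator $[\mD_{t_+}, \Delta_{S_t}]v_+^\bot$ combined with the further splitting of $p_{v,v}^+$ into the vacuum pressure $p^\star_{v,v}$ of \eqref{pwaterwave} plus a lower-order correction. Following the one-phase calculation of \cite{shatah1}, lem 3.4, and invoking lemma \ref{derivationRT} to convert $N_+\cdot \Delta_{S_t}\nabla p^\star_{v,v}$ into $\nabla_{N_+} p^\star_{v,v}\,\N_+\k_+$ modulo $Q$, one extracts a contribution of the form $\nabla_{N_+} p^\star_{v,v}\,\N_+\k_+$. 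Then the operator identities $\r_+ \bar{\N} = \N_+ - \frac{1}{\r_+}\N_+ \N^{-1}\N_+$ and $\r_- \bar{\N} = \N_- - \frac{1}{\r_-}\N_- \N^{-1}\N_-$ (both derived straight from \eqref{N} and \eqref{barN}) together with $\N^{-1} \sim \r_-\N_-^{-1}$ as $\r_-\to 0$ rewrite $\N_+\k_+$ as $(\r_+ + \r_-)\bar{\N}\k_+$ up to $O(\r_-)$ remainders. The \textbf{main obstacle} is controlling all such remainder terms: many of them carry implicit factors of $\r_-^{-1}$ (through $\N^{-1}$ or the denominators in \eqref{Sp}) paired with Sobolev norms of $v_-$, and by proposition \ref{teoVS1} we only have the weighted control $\sqrt{\e}\,|v_-|_{H^{3k}(\Omega_t^-)} \lesssim \sqrt{1+E+E_0}$. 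A careful accounting shows that these contributions are absorbed into $Q$ exactly when $\r_- \leq \e$, which is the role of the hypothesis in the lemma.
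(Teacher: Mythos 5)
Your high-level strategy (differentiate the $\k_+$-evolution equation once more, substitute Euler, and isolate the leading pressure contributions via lemma \ref{derivationRT} and \eqref{N_+barN}) is indeed the one the paper follows, but there are two concrete gaps in the execution. First, you start from the second form of \eqref{D_tkappa}, $\mD_{t_+}\k_+ = -\Delta_{S_t}v_+^\bot - v_+^\bot|\Pi|^2 + \nabla_{v_+^\top}\k_+$, and claim the remainder $R_1$ (and implicitly $\mD_{t_+}R_1$) is $Q$-bounded in $H^{3k-5/2}(S_t)$. This fails: $\nabla_{v_+^\top}\k_+$ already lives only in $H^{3k-3}(S_t)$ with the $Q$-controls, and worse, $\mD_{t_+}R_1$ contains $\nabla_{v_+^\top}\mD_{t_+}\k_+$, which is three tangential derivatives of $v_+^\bot$ and is not $Q$-bounded in $H^{3k-5/2}$. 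For the same reason, after you write $\mD_{t_+}v_+^\bot = -\nabla_{N_+}p_{v,v}^+ - \e^2\nabla_{N_+}p_\k^+ + B_1$ with $B_1 = v_+\cdot\mD_{t_+}N_+$ only in $H^{3k-3/2}$, applying $-\Delta_{S_t}$ produces $\Delta_{S_t}B_1 \in H^{3k-7/2}$ which is likewise not absorbable into $Q$. These out-of-range pieces must cancel among each other (they must, since the total is the same as the paper's), but the cancellations are not exhibited. The paper sidesteps all of this by using the \emph{first} form $\mD_{t_+}\k_+ = -N_+\cdot\Delta_{S_t}v_+ - 2\Pi\cdot((D^\top|_{T\partial\Omega_t})v_+)$; differentiating that form produces a remainder built from $\mD_{t_+}N_+$, $\mD_{t_+}\Pi$ and the commutators $[\mD_{t_+},\Delta_{S_t}]$, $[\mD_{t_+},D^\top]$, each of which is individually $Q$-bounded via \eqref{D_tN}, \eqref{D_tPI} and \eqref{commest2}.

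Second, you attribute the Rayleigh--Taylor term to the commutator $[\mD_{t_+},\Delta_{S_t}]v_+^\bot$. That cannot be the source: by \eqref{commest2} this commutator loses two derivatives, so $[\mD_{t_+},\Delta_{S_t}]v_+^\bot$ is already $Q$-bounded in $H^{3k-5/2}(S_t)$ and contributes nothing at leading order. The RT term arises instead from the \emph{main} term after the Euler substitution, namely $\Delta_{S_t}(\nabla_{N_+}p^\star_{v,v})$: one must commute $\Delta_{S_t}$ past $N_+\cdot$ to reach $N_+\cdot\Delta_{S_t}\nabla p^\star_{v,v}$ (and this commutation itself produces terms involving $\Delta_{S_t}N_+ = |\Pi_+|^2N_+ + \nabla^\top\k_+$ that require care), and only then does lemma \ref{derivationRT} convert it to $\nabla_{N_+}p^\star_{v,v}\,\N_+\k_+$, which \eqref{N_+barN} upgrades to $(\rho_+ + \rho_-)\nabla_{N_+}p^\star_{v,v}\,\bar{\N}\k_+$. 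Finally, your explanation of the role of $\rho_-\leq\e$ is not quite right: it is not primarily about factors of $\rho_-^{-1}$ paired with $v_-$ norms; in the paper the hypothesis is used to absorb the term $\Delta_{S_t}N_+\cdot\nabla\H_+(p^+_{v,v}|_{S_t})$ via \eqref{estimatep^S1}, where the factor $\rho_-$ needs to compensate high Sobolev norms of $\k_+$ coming from $\nabla^\top\k_+$, which are only controlled with a weight $\sqrt{\e}$.
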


\begin{proof}
Using \eqref{D_tkappa} together with \eqref{D_tN}, \eqref{D_tPI} and commutator estimate \eqref{commest1} we get
\begin{equation*}
{\left| \mD_{t_+}^2 \k_+  +  N_+ \cdot \Delta_S \mD_{t_+} v_+ - 
		2 \Pi \cdot (( \left. D^\top \right|_{T\partial\Omega_t}) \mD_{t_+} v_+ ) 
		\right|}_{ H^{3k - \frac{5}{2}} (\Omega_t) } \leq Q \ .
\end{equation*}
Using Euler's equation \eqref{Ematerial},
$ p^+_{v,v} = p^\star_{v,v} + \H_+ \left. p^+_{v,v} \right|_{S_t}$, and $N_+ \cdot \nabla p_\k^+ = \bar{\N} \k_+$, 
we can write
\begin{eqnarray*}
& &  N_+ \cdot \Delta_{S_t} \mD_t v_+  -  2 \Pi \cdot (( \left. D^\top \right|_{T\partial\Omega_t}) \mD_t v_+ ) 
						%\\
						\\
						& =	&  - N_+ \cdot \Delta_{S_t} \nabla p^+_{v,v} 
						+  2 \Pi \cdot (( \left. D^\top \right|_{T\partial\Omega_t}) \nabla p^+_{v,v} )
						\\
						& & - \e^2 N_+ \cdot \Delta_{S_t}  \nabla p_\k^+ 
						+  2 \e^2 \Pi \cdot (( \left. D^\top \right|_{T\partial\Omega_t} \nabla p_\k^+) 
						\\
						& = &  
						- N_+ \cdot \Delta_{S_t} ( \nabla p^\star_{v,v} )  
						+  2 \Pi \cdot (( \left. D^\top \right|_{T\partial\Omega_t}) \nabla p^\star_{v,v} )
						\\
						& & -  \Delta_{S_t} \N_+ p^+_{v,v}
						+  \Delta_{S_t} N_+ \cdot \nabla \H_+ \left.  p^+_{v,v} \right|_{S_t}
						-  \e^2 \Delta_{S_t}  \bar{\N} \k_+   +  \e^2 \Delta_{S_t} N_+ \cdot \nabla p_\k^+  \, .
\end{eqnarray*}
Using \eqref{stimaNPI}, \eqref{estimateep_k}, and the identity $\Delta_{S_t} N_+ = {|\Pi_+|}^2 N_+  +  \nabla^\top \k_+$,
we can estimate
\begin{eqnarray*} 
{ \left|  \Pi \cdot (( \left. D^\top \right|_{T\partial\Omega_t}) \nabla p^\star_{v,v} )  
			\right| }_{H^{3k - \frac{5}{2}} (S_t) }  \hskip 4pt ,  \hskip 8pt 
			\e^2 { | \Delta_{S_t} N_+ \cdot \nabla p_\k^+   | }_{H^{3k - \frac{5}{2}} (S_t) }   \leq Q  \, .
\end{eqnarray*}
From \eqref{estimatep^S1}, we also see that assuming $\rho_- \leq \e$ gives
\begin{eqnarray*} 
{ \left|  \Delta_{S_t} N_+ \cdot \nabla \H_+ \left.  p^+_{v,v} \right|_{S_t}  \right| }_{H^{3k - \frac{5}{2}} (S_t) }   
																								\leq  Q  \, .
\end{eqnarray*}
Combining these estimates with the above chain of identities, lemma \ref{derivationRT}, and \eqref{N_+barN},
gives \eqref{D_t^2kappa}
\end{proof}

%%%%%%%%%%%%%%%%%%%%%%%%%%%
%%%%%%%%%%%%%%%%%%%%%%%%%%%
%
% PROOF OF PROPOSITION
% ABOUT CONTROL BY ENERGY
%
%%%%%%%%%%%%%%%%%%%%%%%%%%%
%%%%%%%%%%%%%%%%%%%%%%%%%%%

\subsection{Proof of proposition \ref{teoVS1}}
\label{secproofteoVS1}

\nl
$\bullet$ Proof of \eqref{ekappaenergy} -
The estimates on the mean-curvature $\kappa_+$ follow easily from the definition of $E_2$ and $E_{RT}$,
respectively in \eqref{E_2} and \eqref{E_RT},
and the properties of $\bar{\N}$ in lemma \ref{lemmaoperator2}.

%%%%%%%%%%%%%%%%%%%%%
%%%%%%%%%%%%%%%%%%%%%

\nl
$\bullet$  Proof of \eqref{venergy+} -
%{\it Control of ${|v_+|}_{H^{3k}}$}:
To estimate $v_+$  we use the fact\footnote{
An essential proof of this fact is contained in \cite[pp. 717-719]{shatah1} and \cite[pp. 864-865]{shatah2}.
See also \cite[Appendix]{shatah3} for further discussion.
}
that for $\partial \Omega \in \Lambda_0$ and $1/2  < s \leq 3k$
\begin{equation}
\label{vectorcontrol}
{ | w | }_{H^s (\Omega) } \leq C \left( { |\div w| }_{H^{s-1} (\Omega) } + { |\curl w| }_{H^{s-1} (\Omega) }
	+  {| \Delta_{\partial \Omega} w \cdot N_+| }_{
	H^{s-\frac{5}{2}} (\partial \Omega) }
	+  { |w| }_{L^2 (\Omega) }  \right)
\end{equation}
where the constant $C$ only depends on $\Lambda_0$.
Since $v_+$ is divergence-free, and the vorticity $\o_+$ is included in the energies, 
we only need to control the boundary value of $v_+$.
From the definition of $E_1$ in \eqref{E_1}, and the properties of $\bar{\N}$, it is clear that
\begin{equation*}
{ \left| \mD_{t_+} \kappa_+  \right| }^2_{ H^{3k - \frac{5}{2}} (S_t) } \leq C ( 1 + E_1 ) \, .
\end{equation*}
From \eqref{D_tkappa} we have
\begin{align*}
{ | - \Delta_{S_t} v_+ \cdot N_+ |  }^2_{H^{ 3k - \frac{5}{2} } (S_t)}  
				& \leq
				C \left( 1 + E_1 +  {|v_+|}_{H^{ 3k - \frac{1}{8} } (\Omega_t^+)} \right)
				\\
				& \leq
				C ( 1 + E_1 )
				+ \beta {|v|}_{H^{3k} (\Omega_t)}
				+ C \beta^{-1} {|v|}_{L^2 (\Omega_t)}
\end{align*}
for some parameter $\beta > 0$. 
Choosing $\beta$ small enough, and controlling ${|v_+|}_{L^2}$ by $E_0$,
gives \eqref{venergy+}.

%%%%%%%%%%%%%%%%%%
%%%%%%%%%%%%%%%%%%

\nl 
$\bullet$ Proof of \eqref{venergy-} - This estimate is proved in four steps.
\newline
1) {\it Estimates on the Lagrangian coordinate map}.
Let $u_\pm$ denote the solution of \eqref{lagrangianmap}.
Using product Sobolev estimates it is not hard to see that %for any integer $0 \leq l \leq 3k$
\begin{align*}
{| u_+ (t,\cdot)  -  \id_{\Omega_0^+} |}_{H^{3k} (\Omega_0^+) } & \leq
        C_1  \int_0^t {| v_+ (s, \cdot) |}_{H^{3k} (\Omega_t^+) }
        {|u_+ (s, \cdot) |}_{H^{3k} (\Omega_0^+) }^{3k} \, ds \, ,
\\
{| u_- (t,\cdot)  -  \id_{\Omega_0^-} |}_{H^{3k-1} (\Omega_0^-) } & \leq
        C_1  \int_0^t {| v_- (s, \cdot) |}_{H^{3k-1} (\Omega_t^-) }
        {|u_- (s, \cdot) |}_{H^{3k-1} (\Omega_0^-) }^{3k-1} \, ds  \, ,
\end{align*}
where $C_1 > 0$ only depends on $n$ and $k$.
Next, we let $\mu$ be a sufficiently large constant compared to the initial data, and define
\begin{equation}
\label{t_0}
t_0 := \sup  \left\{ t \, : \, {|v_+ (s, \cdot)|}_{H^{3k} (\Omega_s^+) } + 
                               {|v_- (s, \cdot)|}_{H^{3k-1} (\Omega_s^-) }  \leq \mu \, \for s \in [0,t]  \right\} \, .
\end{equation}
Since $v$ is assumed to be continuous in time with values in $H^{3k}$, $t_0 > 0$.
An ODE argument based on Gronwall's inequality 
shows that there exists a positive time $t_1$ and a constant $C_2$, only depending on $k,n$,$\mu$ and $\Lambda_0$,
such that
\begin{equation}
\label{t_1}
{| u_+ (t, \cdot) - \id_{\Omega_0^+} | }_{H^{3k} (\Omega_0^+) } 
+
{| u_- (t, \cdot) - \id_{\Omega_0^-} | }_{H^{3k-1} (\Omega_0^-) }  \leq C_2 t  \leq \frac{1}{2}
%\hskip10pt \for t \in [0, t^\star] \, 
\end{equation}
for any $t \in [0, t^\star]$, where $t^\star := \min\{t_0,t_1, 1/(2C_2) \}$ depends only on $\Lambda_0$ and the initial data.
This in particular shows that $u_\pm$ is a diffeomorphism, so that $u_\pm^{-1} (t, \cdot)$ is 
a well-defined volume preserving map for $x \in \Omega_t^\pm$, and
for the same range of times we have
\begin{equation}
\label{Du-}
{| {(D u_+ )}^{-1} | }_{H^{3k-1} (\Omega_0^+) } \quad , \quad {| {(D u_- )}^{-1} | }_{H^{3k-2} (\Omega_0^-) } 
\leq   2  \, .
%\hskip8pt, \hskip10pt \for \, 0 \leq s \leq 3k - 1.
\end{equation}

\nl
2) {\it Decomposition of vector fields and control of ${|v_-|}_{L^2}$}.
The well-know Hodge decomposition of vector fields allows one to decompose any arbitrary vector field $w$,
defined on a domain $\Omega \subset \R^n$, in two components, a divergence-free component and a gradient part.
More precisely we can write $w = v + \nabla g$, where $\div v = 0 = v^\bot$, and
$g$ satisfies the Neumann boundary problem
\begin{equation*}
\left\{
\begin{array}{ll}
\Delta g = \div w   &,  \, x \in \Omega
\\
\nabla_{N} g = w^\bot  &,  \,  x \in \partial \Omega \, .
\end{array}
\right.
\end{equation*}
We denote by $w_{ir} := \nabla g$ the so-called irrotational part of $w$ and 
define the projection $P_r$ on the rotational part by $w_r := P_r (w) := w - w_{ir}$.
This splitting is orthogonal on $L^2$ and $P_r (w)$ is a gradient-free projection\footnote{
More details on this decomposition and related estimates are given in \cite[Appendix]{shatah3}.
}.
If we consider the divergence-free velocity field $v_-$, the above decomposition reduces to
\begin{equation*}
v_-   =  \nabla \H_- \N_-^{-1} v_-^\bot  +  v_{-, r} \, .
\end{equation*} 
In \cite{shatah3} it is observed that the invariance of Euler equations under the action of
the group of volume preserving diffeomorphisms leads, via Noether's theorem, to a family of conserved quantities
which determine completely the rotational part of the velocities\footnote{
%This fact is proved in \cite[page 10]{shatah3}.
For completeness we provide here the proof.
Consider $ F = {( D u )}^\star (v \circ u) $, the pullback of $v$ by the map $u$. 
Taking a time derivative, using Euler equations $\partial_t (v \circ u) = - \nabla p \circ u$
and \eqref{lagrangianmap} we get
\begin{equation*}
\frac{d}{dt} F = \frac{1}{2} \nabla {| v \circ u|}^2  -  {( D u )}^\ast \nabla p \circ u
\end{equation*}
hence
\begin{equation*}
F (t) = F(0) + \nabla \left( \int_0^t \frac{1}{2} {| v \circ u|}^2  -  p \circ u \right)
\end{equation*}
which in turn implies
\begin{equation}
\label{P_r}
v (t, x)  =   {\left( D u^{-1} \right)}^\ast v (0 , u^{-1} (t,x) )  +  \nabla f
\end{equation}
for some $f$, and therefore proves \eqref{consequenceP_r}.
}:
\begin{equation}
\label{consequenceP_r}
v_r (t,\cdot) = P_r \left( S_t, {(Du^{-1})}^\ast v(0, u^{-1}( t, \cdot) )  \right)
\end{equation}
where $P_r (S_t, w)$ denotes the projection of $w : \R^n \minus S_t \rightarrow \R^n$ onto its
rotational (gradient-free) part.
Applying the above identity to $v_-$, using standard estimates for the elliptic Neumann-problem, 
%(or alternatively the properties of $\N_-^{-1}$ in lemma \ref{lemmaoperator1}),
$v_-^\bot = - v_+^\bot$, and \eqref{Du-},  we can estimate
\begin{eqnarray*}
{ | v_-  | }^2_{L^2 (\Omega_t^-)} & = & { | v_r | }^2_{L^2 (\Omega_t^-)} + { | v_{ir} | }^2_{L^2 (\Omega_t^-)} 
\\
& \leq & {| {(Du_-^{-1})}^\ast v(0, u^{-1}( t, \cdot))  |}^2_{L^2 (\Omega_t^-)}
                   + { | v_+  | }^2_{L^2 (\Omega_t^+)} 
\\
& \leq &  C {| Du_-^{-1} |}_{L^\infty (\Omega_0^-)}^2 {|  v(0, \cdot)  |}_{L^2 (\Omega_0^-)}^2 + C E_0  
                   \leq C (1 + E_0)
\end{eqnarray*}
with $C$ depending only on the initial data.

\nl
3) {\it  Control of ${|v_-|}_{H^{3k-1}}$}.
For this purpose we want to apply the following variant of \eqref{vectorcontrol}:
\begin{align}
\nonumber
{ | w | }_{H^s (\Omega) } \leq C (1 + {|\k_+|}_{H^{s - \frac{3}{2}}} ) 
	& \left( { |\div w| }_{H^{s-1} (\Omega) } + { |\curl w| }_{H^{s-1} (\Omega) } \right.
	\\
	& \left. +  {| w^\bot | }_{ H^{s-\frac{1}{2}} (\partial \Omega) }
	+  { |w| }_{L^2 (\Omega) } 
	\right)
\label{vectorcontrol2}
\end{align}
for $1/2 < s \leq 3k$.
To control the vorticity term $\curl v_-$, we use \eqref{P_r} 
and the fact that pull-backs commute with exterior derivatives to get
\begin{equation*}
\curl v_- (t, \cdot) =   {(Du^{-1})}^\ast \curl v_- (0, u^{-1}( t, \cdot) ) \, .
\end{equation*}
Then, \eqref{Du-} implies
\begin{equation}
\label{curlv-}
{| \curl v_- |}_{ H^{s} (\Omega_t^-) }  \leq  C \hskip10pt , \hskip7pt 0 \leq s \leq 3k-2
\end{equation}
for some constant $C$ depending only on the initial data.
Using the above inequality with $s = 3k-2$, and \eqref{vectorcontrol2} together with $v_-^\bot = - v_+^\bot$,
we have
\begin{align*}
{|v_-|}^2_{ H^{3k-1} (\Omega_t^-) }  & \leq C \left(  {| v_+^\bot  |}^2_{ H^{3k- \frac{3}{2}} (S_t) }
					+
					{|\curl v_-|}^2_{ H^{3k-2} (\Omega_t^-) }  
					+
 					{|v_-|}^2_{ L^2 (\Omega_t^-) }
 					\right)
\\
& \leq  C ( 1 + E + E_0) 
\end{align*}
with $C$ depending only on $\Lambda_0$ and the initial data.

\nl
4) {\it  Weighted control of ${|v_-|}_{H^{3k}}$}.
We want to use \eqref{vectorcontrol} with $s = 3k$.
Notice that the vorticity term $\e {| \o_- |}^2_{H^{3k-1}(\Omega_t^-)}$ is already included in the energy \eqref{Energy},
and that ${|v_-|}_{L^2(\Omega_t^-)}$ has been estimated in the previous paragraph.
Therefore, in order to conclude the proof of \eqref{venergy-},
we just need to control the boundary value of $v_-$.
%Using again \eqref{vectorcontrol} together \eqref{curlv-} for $s = 3k -1$ to control the vorticity term,
%we see 
Since $v_-^\bot = - v_+^\bot$, we have
\begin{align*}
N_- \cdot \Delta_{S_t} v_-  & =  - \Delta_{S_t} v_+^\bot   -   2 \Pi \cdot ( \left. D^\top \right|_{S_t} v_- )
	- v_- \cdot \Delta_{S_t} N_-
	%\\
	\\
	& =  - N_+ \cdot \Delta_{S_t} v_+  -   2 \Pi \cdot ( \left. D^\top \right|_{S_t} (v_+ + v_-) )
	- (v_+ + v_-) \cdot \Delta_{S_t} N_- \, ,
\end{align*}
so that
\begin{align*}
{| N_- \cdot \Delta_{S_t} v_- |}^2_{ H^{3k- \frac{5}{2}} (S_t) }  
& \leq C ( 1 + E_1 ) 
\\
& + C {| v |}^2_{ H^{3k - 1} (\R^n \minus S_t) } 
\left( {|\Pi|}^2_{ H^{3k- \frac{3}{2}} (S_t) }  +  {|N|}^2_{ H^{3k- \frac{1}{2}} (S_t) } \right) 
\\
& 
\leq
C ( 1 + E_1 )
	+ C {| v |}^2_{ H^{3k - 1} (\R^n \minus S_t) } 
	\left( 1 + {|\k_+|}^2_{ H^{3k- \frac{3}{2}} (S_t) } \right) \, ,
\end{align*}
having used \eqref{stimaNPI}.
Finally, interpolating $\k_+$ between $H^{3k - \frac{5}{2}}$ and $H^{3k-1}$, and using
\eqref{ekappaenergy}, we have
\begin{equation*}
\e {|\k_+|}^2_{ H^{3k - \frac{3}{2}} (S_t) } \leq  C (1 + E) \, ,
\end{equation*}
which combined with the previous estimate gives
\begin{eqnarray*}
\e {| N_- \cdot \Delta_{S_t} v_- |}^2_{ H^{3k- \frac{5}{2}} (S_t) }
			\leq   	{  C ( 1 + E + E_0)  }^{2} \, .
\end{eqnarray*}
This concludes the proof of \eqref{venergy-}  $ _\blacksquare $

%%%%%%%%%%%%%%%%%%%%%%%%%%%%%%%%%%%%%
%%%%%%%%%%%%%%%%%%%%%%%%%%%%%%%%%%%%%
%
%
% PROOF OF ENERGY ESTIMATES
%
%
%%%%%%%%%%%%%%%%%%%%%%%%%%%%%%%%%%%%%
%%%%%%%%%%%%%%%%%%%%%%%%%%%%%%%%%%%%%

\subsection{Proof of Theorem \ref{teoEE}}
\label{secproofteoEE}

\subsubsection{Estimate on  ${|\k|}_{H^{ 3k-\frac{5}{2}} (S_t)}$}
The estimate on the Lagrangian coordinate map in \eqref{t_1} 
implies in particular the estimate on the mean-curvature
\begin{equation}
\label{estklow}
{| \k_+ (t, \cdot) | }_{H^{3k - \frac{5}{2} } (S_t) } \leq C t + 
		{| \k_+ (0, \cdot) | }_{H^{3k - \frac{5}{2} } (S_0) }
 		\hskip10pt \for t \in [0, \min\{t_0, t_1 \}] \, ,
\end{equation}
where the constant $C$ is only determined by $\mu$ (see \eqref{t_0}) and the set\footnote{
This can be checked using the local coordinates constructed in \cite[appendix A]{shatah1}.
} 
$\Lambda_0$.
We conclude that there exists a time $t_2$, determined again only by $\mu$ and the set $\Lambda_0$,
such that 
\begin{equation*}
S_t \in \Lambda_0 \hskip8pt , \hskip8pt  \for t \in [ 0, \min\{t_0,t_2\}] \, .
\end{equation*}

\subsubsection{Evolution of the Energy}

The following proposition shows how the time evolution of $E$ can be bounded by a polynomial $Q(E)$ 
up to the time derivative of an extra energy term
due to the Kelvin-Helmotz instability.
\begin{pro}
Assuming $\rho_- \leq \e^{3/2}$, there exists a polynomial $Q$, as in \eqref{Q},
%\begin{equation*}
%Q (t) = Q \left( {|v_+ (t, \cdot)|}_{ H^{3k } } , \, 
%						{ \e |\kappa (t, \cdot)|}_{ H^{3k - 1} } , {|\kappa (t, \cdot)|}_{ H^{3k - 2} } ,
%						  \e^{1/3} {|v_- (t, \cdot)|}_{ H^{3k} } , {| v_- (t, \cdot)|}_{ H^{3k - 1} } \right)
%\end{equation*} 
with positive coefficients depending on the set $\Lambda_0$ and independent of $\rho_-$ and $\e$,
%and a positive time $t_1$ also independent of $\rho_-$ and $\e$ 
such that 
%for $ 0 \leq t \leq t_1$
\begin{equation}
\label{evolutionE}
\left|  \frac{d}{dt} (E - E_{\ex}) \right| \leq Q \, ,
\end{equation}
where the extra energy term $E_{\ex}$ is given by 
\begin{eqnarray}
\label{E_ex}
E_{\ex}	=	 - \frac{\rho_-}{ 2 (\rho_+ + \rho_-) } 
			\int_{S_t} \nabla_{v_+^\top - v_-^\top} \kappa_+ \cdot \bar{\mathcal{N}} 
			{(- \Delta_{S_t} \bar{\mathcal{N}}) }^{2k-2} \nabla_{v_+^\top - v_-^\top} \kappa_+   \, dS \, .
\end{eqnarray}
\end{pro}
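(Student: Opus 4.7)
The plan is to differentiate $E$ term by term and identify the principal-order contributions, showing that they either cancel against each other or assemble into $dE_{\mathrm{ex}}/dt$, modulo a remainder bounded by $Q$. The vorticity terms $|\o_+|^2_{H^{3k-1}(\Omega_t^+)}$ and $\e|\o_-|^2_{H^{3k-1}(\Omega_t^-)}$ are the easiest: applying $\mD_{t_\pm}$ to the vorticity equation $\mD_{t_\pm}\o_\pm = (\nabla v_\pm)\cdot \o_\pm$ and using lemma \ref{lemp} to control the material derivative of the velocity yields the $Q$-bound after standard energy manipulations. The weight $\e$ in front of $|\o_-|^2$ is what allows uniformity in $\rho_-\rightarrow 0$ via the weighted bound \eqref{venergy-}.

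The main work is the simultaneous analysis of $dE_1/dt$, $dE_2/dt$ and $dE_{RT}/dt$. For $E_1$, I would use the moving-surface transport formula together with commutator estimates between $\mD_{t_+}$ and the operators $\bar{\N},\Delta_{S_t}$ (proved along the lines of those in \cite{shatah2}) to extract the leading-order identity
\[
\frac{dE_1}{dt} = \int_{S_t} \mD_{t_+}\k_+ \cdot \bar{\N}(-\Delta_{S_t}\bar{\N})^{2k-2}\, \mD_{t_+}^2 \k_+ \, dS \; + \; O(Q).
\]
Substituting $\mD_{t_+}^2\k_+$ from lemma \ref{lemD_t^2}, the surface-tension contribution $\e^2\Delta_{S_t}\bar{\N}\k_+$ produces the quantity $-\e^2\int \mD_{t_+}\k_+\cdot\bar{\N}(-\Delta_{S_t}\bar{\N})^{2k-1}\k_+\,dS$, which, by self-adjointness of $\bar{\N}(-\Delta_{S_t}\bar{\N})^{2k-1}$ and the alternative expression \eqref{E_2} for $E_2$, cancels with $dE_2/dt$ up to transport and commutator errors bounded by $Q$. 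Similarly, the Rayleigh-Taylor piece $(\rho_+ +\rho_-)\nabla_{N_+} p^\star_{v,v}\bar{\N}\k_+$ cancels with $dE_{RT}/dt$; the time derivative of the weight $\nabla_{N_+}p^\star_{v,v}$ is $Q$-bounded through estimate \eqref{estimatep^star_vv} and the evolution formulas for $N_+$ on $S_t$.

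The remaining principal term from lemma \ref{lemD_t^2} is $\tfrac{1}{\rho_+}\Delta_{S_t}\N_+ p^S_{v,v}$. From the expression \eqref{p_vw} of $p^S_{v,v}=-\N^{-1} a$, all contributions to $a$ other than the Kelvin--Helmotz piece $2\nabla_{v_+^\top-v_-^\top}v_+^\bot$ are $Q$-controlled (using lemma \ref{lemp} and estimate \eqref{estimatep^S1}). For the Kelvin--Helmotz piece, I would use the principal-symbol identity $\tfrac{1}{\rho_+}\N_+\N^{-1}\sim\tfrac{\rho_-}{\rho_++\rho_-}$ (a consequence of \eqref{N} and the fact that $\N_\pm$ have the same principal symbol $|\xi|$), together with $v_+^\bot=-v_-^\bot$ and the relation $\N_\pm v_\pm^\bot$ to $\k_\pm$ via the tangential divergence of $v_\pm^\top$, to rewrite the principal part as $\tfrac{\rho_-}{\rho_++\rho_-}\,\Delta_{S_t}\nabla_{v_+^\top-v_-^\top}\k_+$ modulo a $Q$-bounded remainder. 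Pairing this against $\mD_{t_+}\k_+\cdot\bar{\N}(-\Delta_{S_t}\bar{\N})^{2k-2}$ and integrating by parts in the tangential derivative $\nabla_{v_+^\top-v_-^\top}$ produces exactly $dE_{\mathrm{ex}}/dt$ at leading order, because by self-adjointness of the intermediate operator $\bar{\N}(-\Delta_{S_t}\bar{\N})^{2k-2}$ the time derivative of \eqref{E_ex} reduces principally to the same pairing.

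The hard part will be keeping every commutator and lower-order correction uniformly bounded by $Q$ as $\rho_-,\e\rightarrow 0$, and this is where the hypothesis $\rho_-\leq \e^{3/2}$ is used decisively. Error terms generated either by commutators of $\mD_{t_+}$ with $\N^{-1},\N_-^{-1},\bar{\N}$ or by the moving-surface transport formula contain factors of $v_-$ whose highest-order norms are controlled by proposition \ref{teoVS1} only through the weighted bound $\e|v_-|^2_{H^{3k}(\Omega_t^-)}\leq C(1+E+E_0)^2$. The structure of $p^S_{v,v}$ and of $\N_+\N^{-1}$ furnishes an extra factor of $\rho_-$ (and sometimes $\rho_-/\e$) on these error terms, and one checks that under \eqref{condition} each such factor can be absorbed to produce a clean bound by a polynomial $Q$ as in \eqref{Q}. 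Careful bookkeeping of these $\rho_-,\e$ powers in the expansions of $\mathcal{N}^{-1}, \bar{\N}$ and in the transport errors is the technical heart of the argument.
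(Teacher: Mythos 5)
Your overall architecture matches the paper's: differentiate each piece of $E$, use the transport of $dS$ and the commutator estimates, substitute lemma \ref{lemD_t^2} into $dE_1/dt$ so that the $\e^2\Delta_{S_t}\bar{\N}\k_+$ term cancels against $dE_2/dt$ and the $(\rho_++\rho_-)\nabla_{N_+}p^\star_{v,v}\bar{\N}\k_+$ term against $dE_{RT}/dt$, handle the vorticity via the transport equation and the weighted bound \eqref{venergy-}, and then extract $dE_{\ex}/dt$ from the residual term $\tfrac{1}{\rho_+}\Delta_{S_t}\N_+ p^S_{v,v}$ using $\N_+\N^{-1}\approx\tfrac{\rho_+\rho_-}{\rho_++\rho_-}$. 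Up to that point the proposal is sound.

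There is, however, a genuine gap in your treatment of the residual term. You claim that in $a$ (the boundary data of $p^S_{v,v}$ in \eqref{p_vw}) every contribution except the Kelvin--Helmotz piece $2\nabla_{v_+^\top-v_-^\top}v_+^\bot$ is $Q$-controlled. That is false for the second-fundamental-form terms $\Pi_\pm(v_\pm^\top,v_\pm^\top)$: after the outer $\Delta_{S_t}$ acts, these are of the same (principal) order as the Kelvin--Helmotz piece --- by \eqref{formulaPIk} one has $-\Delta_{S_t}\Pi_\pm(v_\pm^\top,v_\pm^\top)\approx\nabla_{v_\pm^\top}\nabla_{v_\pm^\top}\k_\pm$ modulo $Q$, which involves $\k_+$ at regularity $H^{3k-9/2}$ paired against $\bar{\N}(-\Delta_{S_t}\bar{\N})^{2k-2}\mD_{t_+}\k_+$, and neither lemma \ref{lemp} nor \eqref{estimatep^S1} reaches the required regularity $H^{3k-5/2}$ (nor does the prefactor $\rho_-$ buy the missing derivatives). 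These terms are exactly the paper's $K_\pm^{(2)}$, and they are indispensable: the Kelvin--Helmotz piece alone produces, after integration by parts, $-\tfrac{\rho_-}{\rho_++\rho_-}\tfrac{d}{dt}\int\bigl(\nabla_{v_+^\top}\k_+\cdot M\nabla_{v_+^\top}\k_+-\nabla_{v_+^\top}\k_+\cdot M\nabla_{v_-^\top}\k_+\bigr)dS$ with $M=\bar{\N}(-\Delta_{S_t}\bar{\N})^{2k-2}$, and only by adding the contributions $+\tfrac{\rho_-}{2(\rho_++\rho_-)}\tfrac{d}{dt}\int\nabla_{v_+^\top}\k_+\cdot M\nabla_{v_+^\top}\k_+\,dS$ and $-\tfrac{\rho_-}{2(\rho_++\rho_-)}\tfrac{d}{dt}\int\nabla_{v_-^\top}\k_+\cdot M\nabla_{v_-^\top}\k_+\,dS$ coming from $K_+^{(2)}$ and $K_-^{(2)}$ does the square complete to the quadratic form $\nabla_{v_+^\top-v_-^\top}\k_+\cdot M\nabla_{v_+^\top-v_-^\top}\k_+$ defining $E_{\ex}$ in \eqref{E_ex}. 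With your truncation the leftover difference is a top-order quantity not bounded by $Q$, so \eqref{evolutionE} would not follow. Relatedly, your plan to relate $\nabla_{v_+^\top-v_-^\top}v_+^\bot$ to $\nabla_{v_+^\top-v_-^\top}\k_+$ "via the tangential divergence of $v_\pm^\top$" should instead go through the evolution identity \eqref{D_tkappa}, $-\Delta_{S_t}v_+^\bot=\mD_{t_+}\k_+-\nabla_{v_+^\top}\k_++v_+^\bot|\Pi|^2$, which splits the term into a total tangential derivative (harmless after integration by parts) plus the $\nabla_{v_+^\top}\k_+$ piece that actually enters $E_{\ex}$.
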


\pr
Combining \eqref{D_tdS}
%\begin{equation*}
%$ \mD_{t_\pm} dS = ( \mathcal{D} \cdot v_\pm^\top  + \k_\pm v_\pm^\top ) dS$,
%\end{equation*}
with the divergence decomposition formula %\cite{shatah1}
\begin{equation*}
\left.  \div v_\pm \right|_{S_t} =  \mathcal{D} \cdot v_\pm^\top  + \k_\pm v_\pm^\top + \nabla_{N_\pm} v_\pm \cdot N_\pm  =  0 \, ,
\end{equation*}
we see that
\begin{equation*}
\mD_{t_\pm} dS = - \nabla_{N_\pm} v_\pm \cdot N_\pm dS \, .
\end{equation*}
Then, since $3k - \frac{5}{2} > \frac{n-1}{2}$, we can bound
%, using the embedding $H^{3k - \frac{3}{2}} (S_t) \subset L^{\infty} (S_t)$, we have
\begin{equation*}
{ \left| \nabla_{N_\pm} v_\pm \cdot N_\pm  \right| }_{L^\infty (S_t)} \leq C  {|v_\pm|}_{H^{3k-1} (\Omega_t^\pm)}  \leq Q \, .
\end{equation*}
Therefore, $\mD_{t_\pm} dS$ will not complicate the estimates.
We now proceed to analyze the time evolution of each one of the terms in the energy \eqref{Energy}
keeping track only of terms which cannot be bounded by $Q$.

%%%%%%%%%%%%%%%%%
%%%%%%%%%%%%%%%%%

\nl
$\bullet$ {\it Evolution of $E_{RT}$}:
We want to show 
\begin{equation}
\label{evolutionE_RT}
\left| \frac{d}{dt} E_{RT}  +
			  (\rho_+  +  \rho_- )
			  \int_{S_t} \nabla_{N_+} p^\star_{v,v}  \bar{\N} \k_+  \, \bar{\N} {( - \Delta_{S_t} \bar{\mathcal{N}}  )}^{2k-2}
			  \mathbf{D}_{t_+} \k_+
			  \, dS
			  \right|  \leq Q \,  .
\end{equation}
From the commutator estimate \eqref{D_tgrad}, formula \eqref{D_tN},
and the definition of $p^\star_{v,v}$ in \eqref{pwaterwave},
we get
%\begin{align*}
%\mD_{t_+} \nabla_{N_+} p^\star_{v,v} = \nabla_{N_+} \mD_{t_+} p^\star_{v,v} \right|}_{L^\infty} \leq Q \, . 
%\end{align*}
%
%
%we have
%\begin{align*}
%\nabla_{N_+} \mD_{t_+} p^\star_{v,v} 
%=
%- N_+ \cdot \left(  \nabla \Delta^{-1}_+ \mD_{t_+} \tr{(Dv)}^2 
%      + [\mD_{t_+}, \Delta^{-1}_+] \tr{(Dv)}^2  \right)
%\end{align*}
\begin{align*}
\mD_{t_+} \nabla_{N_+} p^\star_{v,v}
& =  - N_+ \cdot \left( {(Dv_+)}^\ast \nabla p^\star_{v,v}  -  \nabla \mD_{t_+} p^\star_{v,v} \right) 
\\
& =
- N_+ \cdot \left( {(Dv_+)}^\ast \nabla p^\star_{v,v}  +  \nabla \Delta^{-1}_+ \mD_{t_+} \tr{(Dv)}^2 
      + [\mD_{t_+}, \Delta^{-1}_+] \tr{(Dv)}^2  \right)
\end{align*}
Using Euler's equations we see that
$\mathbf{D}_{t_+} \tr {(Dv_+)}^2 = - 2 \tr [ {(Dv_+)}^3  - 2 \rho_+ D^2 p_+ \cdot D v_+]$.
Combining this with \eqref{D_tDelta-1}
%and the embedding $H^{3k - \frac{3}{2}} (S_t) \subset L^\infty (S_t)$ 
gives the estimate
\begin{equation*}
{| \mD_{t_+} \nabla_{N_+} p^\star_{v,v} | }_{L^\infty (S_t)} \leq Q  \, .
\end{equation*}
Then, we see from the definition of $E_{RT}$ in \eqref{E_RT}, and 
commutator estimates \eqref{commest1} and \eqref{commest2}, that
\begin{align*}
\left| \frac{d}{dt} E_{RT}  \right.  +   (\rho_+  + \rho_-) 
			\left. \int_{S_t} 
			{(-\bar{\N} \Delta_{S_t} )}^{k-1}  \bar{\N} \mD_{t_+} \k_+
			\nabla_{N_+} p^\star_{v,v}   {(-\bar{\N} \Delta_{S_t} )}^{k-1}  \bar{\N} \k_+  \, dS
			\right| \leq Q \, .
\end{align*}
This already gives \eqref{evolutionE_RT} in the case $k=1$.
For $k \geq 2$, we use lemma \ref{lemmacommp^star} to commute the multiplication operator
by $\nabla_{N_+} p^\star_{v,v}$ with $\bar{\N}$ and $ \Delta_{S_t}$,
and finally obtain \eqref{evolutionE_RT}.

%%%%%%%%%%%%%%%%%%%%%%
%%%%%%%%%%%%%%%%%%%%%%

\nl
$\bullet$ {\it Evolution of $E_2$}:  
From the definition of $E_2$ in \eqref{E_2}, and commutator estimates
\eqref{commest1} and \eqref{commest2}, it follows
\begin{equation}
\label{evolutionE_2}
\left| \frac{d}{dt} E_2   + \e^2  \int_{S_t}  
         \k_+ \bar{\mathcal{N}} {(- \Delta_{S_t} \bar{\mathcal{N}}) }^{2k-1} \mathbf{D}_{t_+} \k_+ \, dS \right| \leq Q  \,  .
\end{equation}

%%%%%%%%%%%%%%%%%%%%%%
%%%%%%%%%%%%%%%%%%%%%%

\nl
$\bullet$ {\it Evolution of the vorticity $\o= Dv - {(Dv)}^\star$}: 
Commuting $\mD_{t_\pm}$ and $D$ we get the identity
\begin{align}
\nonumber
\mathbf{D}_{t_\pm} \o_{\pm} & = D \mathbf{D}_{t_\pm} v_\pm - {(Dv_\pm)}^2 - {(D \mathbf{D}_{t_\pm} v_\pm)}^\ast
              +  {({(Dv_\pm)}^\ast)}^2
\\
& = {({(Dv_\pm)}^\ast)}^2 - {(Dv_\pm)}^2 = - \o_\pm Dv_\pm  - {(Dv_\pm)}^\ast \o_\pm \, .
\end{align}
Then, repeated commutations and product Sobolev estimates
show that, for any integer $0 \leq s \leq 3k$,
\begin{align}
\nonumber
\int_{\Omega_t^\pm}  \mathbf{D}_{t_\pm} {|D^s \o_\pm |}^2 \, dx
              & \leq  C {|\o_\pm (t, \cdot)|}^2_{ H^s (\Omega_t^\pm) } {|Dv_\pm (t, \cdot)|}_{ L^\infty (\Omega_t^\pm)  }
              \\
              & +
              C {|\o_\pm (t, \cdot)|}_{ H^s (\Omega_t^\pm) }  {|Dv_\pm (t, \cdot)|}_{ H^s (\Omega_t^\pm)  } 
              {|\o_\pm (t, \cdot)|}_{ L^\infty (\Omega_t^\pm) } \, .
\label{D_to}
\end{align}
In the case of $\o_+$, we use the above inequality with $s = 3k-1$ %, and \eqref{venergy+},
to get
\begin{align}
\label{evolvorticity}
\frac{d}{dt}  \int_{\Omega_t^+} {|D^{3k -1}  \o_+ |}^2  \, dx  \leq
                {|v_+ (t, \cdot)|}_{ H^{3k} (\Omega_t^+)  } 
                {|\o_+ (t, \cdot)|}_{ H^{3k-1} (\Omega_t^+) }^2  \leq Q \, .
\end{align}
In the case of $\o_-$, we use again \eqref{D_to},
\eqref{venergy-low}, \eqref{venergy-},
and \eqref{curlv-} together with Sobolev's embedding, to obtain
\begin{align}
\nonumber
\e & \frac{d}{dt}  \int_{\Omega_t^-} {|D^{3k -1}  \o_- |}^2  \, dx
\\
\nonumber
& \leq C \e {|\o_- (t, \cdot)|}_{ H^{3k-1} (\Omega_t^-) }^2 {|v_- (t, \cdot)|}_{ H^{3k-1} (\Omega_t^-)  }
\\
& + C \sqrt{\e} {|\o_- (t, \cdot)|}_{ H^{3k-1} (\Omega_t^-) }  \sqrt{\e} {|v_- (t, \cdot)|}_{ H^{3k} (\Omega_t^-)  }
              {|\o_- (t, \cdot)|}_{ L^\infty (\Omega_t^-) }
\label{evolvorticity-}
\leq Q \, .
\end{align}

%%%%%%%%%%%%%%%%%%%
%%%%%%%%%%%%%%%%%%%
%
%  EVOLUTION OF E_1
%
%%%%%%%%%%%%%%%%%%%
%%%%%%%%%%%%%%%%%%%

\nl
$\bullet$  {\it Evolution of $E_1$}:
From the definition of $E_1$ in \eqref{E_1}, commutator estimates \eqref{commest1} and \eqref{commest2}
we have
\begin{equation*}
\left| \frac{d}{dt} E_1	-	\int _{S_t}  
			\bar{\N} \mD_{t_+}^2 \kappa_+ {(-  \Delta_{S_t} \bar{\N}  ) }^{2k-2} 
			\mD_{t_+} \k_+ \, dS
			\right| \leq Q  \, .
\end{equation*}			
Using \eqref{D_t^2kappa} we get
\begin{align*}
\left| \frac{d}{dt} E_1	\right. & -	\int _{S_t} 
			\bar{\N}
			\bigl[
			\e^2 ( \Delta_{S_t} \bar{\N} ) \k_+  +  (\rho_+  +  \rho_-) \nabla_{N_+} p^\star_{v,v} \bar{\N} \k_+
			\\
			& +
			\left.  
			\frac{1}{\rho_+}  \Delta_{S_t}  \N_+  p^S_{v,v}
			\bigr]
			{(- \Delta_{S_t} \bar{\N}  ) }^{2k-2} 
			\mD_{t_+} \k_+ \, dS \, 
			\right| \leq Q   \,  .
\end{align*}			
Summing the above inequality to \eqref{evolutionE_RT}, \eqref{evolutionE_2}, \eqref{evolvorticity}, and \eqref{evolvorticity-},
we see that
\begin{equation*}
\left| \frac{d}{dt} E
			-	\int _{S_t}  \bar{\N}
			\left(
			\frac{1}{\rho_+} \Delta_{S_t} \N_+ p^S_{v,v}
			\right) 
			{(-  \Delta_{S_t} \bar{\N} ) }^{2k-2} 
			\mD_{t_+} \k_+ \, dS
			\right| \leq Q   \,  .
\end{equation*}

We now define 
\begin{equation*}
K := \int _{S_t}  \bar{\N}
			\left(
			\frac{1}{\rho_+} \Delta_{S_t} \N_+ p^S_{v,v}
			\right) 
			{(-  \Delta_{S_t} \bar{\N} ) }^{2k-2} 
			\mD_{t_+} \k_+ \, dS
\end{equation*}
and focus on estimating this term.
Equation \eqref{p_vw} gives
\begin{eqnarray*}
\frac{1}{\rho_+} \mathcal{N}_+ p^S_{v,v} & = & 
		- \frac{1}{\rho_+} \mathcal{N}_+ \mathcal{N}^{-1} \left\{ 2 \nabla_{v_+^\top - v_-^\top} v_+^\bot
		- \Pi_+ ( v_+^\top, v_+^\top ) \right.
		%\\
		\\
		&  &  \left. 
		- \Pi_- ( v_-^\top, v_-^\top ) 
		- \nabla_{N_+} \Delta_+^{-1}  \tr {(Dv)}^2  - \nabla_{N_-} \Delta_-^{-1}  \tr {(Dv)}^2
		\right\} \, .
\end{eqnarray*}
Using \eqref{estimateN-1} we see that the last two terms above are lower order:
\begin{align*}
{\left|  \mathcal{N}_+ \mathcal{N}^{-1} \nabla_{N_\pm} \Delta_\pm^{-1}  \tr {(Dv)}^2  \right|}_{H^{3k - \frac{1}{2}} (S_t)}
		& \leq   C  \rho_-
		{| \nabla_{N_\pm} \Delta_\pm^{-1}  \tr {(Dv)}^2 |}_{H^{3k - \frac{1}{2}} (S_t)}
		%\\
		\\
		& \leq
		C \e^{3/2} \left( 1 + {| \k_+ |}_{H^{3k - \frac{3}{2}} (S_t)} \right)
		{|v|}^2_{H^{3k} (\R^n \minus S_t)}
\\  & \leq Q \, .
\end{align*}
From \eqref{N_+N^-1} and \eqref{stimaNPI} we obtain
\begin{equation*}
{ \left|  \frac{1}{\rho_+} \mathcal{N}_+ p^S_{v,v}   
					+ \frac{ \rho_- }{\rho_+  +  \rho_-} \left(  2 \nabla_{v_+^\top - v_-^\top} v_+^\bot
				    - \Pi_+ ( v_+^\top, v_+^\top )  - \Pi_- ( v_-^\top, v_-^\top ) 
				    \right)
								\right|  }_{H^{\frac{3}{2} k - \frac{1}{2} } (S_t) }  
								\! \leq   Q  \, .
\end{equation*}
Therefore, if we define 
\begin{align}
\label{K_pm^1}
K_\pm^{(1)} & :=   -  \frac{ 2 \rho_- }{\rho_+  +  \rho_-}   \int_{S_t}   
					(- \Delta_{S_t})  \nabla_{\pm v_\pm^\top} v_+^\bot
					\bar{\N} { (- \Delta_{S_t} \bar{\mathcal{N}}) }^{2k-2} \mathbf{D}_{t_+} \k_+
					\, dS \, ,
%\\ \nonumber
\\
\label{K_pm^2}
K_\pm^{(2)} & :=   \frac{ \rho_- }{\rho_+  +  \rho_-}   \int_{S_t}   
					(- \Delta_{S_t} )  \Pi_\pm ( v_\pm^\top, v_\pm^\top )
					\bar{\N} { (- \Delta_{S_t} \bar{\mathcal{N}}) }^{2k-2} \mathbf{D}_{t_+} \k_+ \, ,
					\, dS \, ,
\end{align}
we have $ \left| K - \left( K_+^{(1)} + K_-^{(1)} + K_+^{(2)} + K_-^{(2)} \right)  \right|  \leq Q $,
so that
\begin{equation}
\label{dtE-K}
\left| \frac{d}{dt} E  -  \left( K_+^{(1)} + K_-^{(1)} + K_+^{(2)} + K_-^{(2)} \right) \right|  \leq Q  \,  .
\end{equation}

%%%%%%%%%%%%%%%%%%%%%%%
%%%%%%%%%%%%%%%%%%%%%%%

\nl
{\it Estimate of $K^{(1)}_\pm$:} 
To deal with the tangential derivative $\nabla_{v_\pm^\top}$ 
consider flows $\Phi_\pm (\tau, \cdot)$ on $\Omega_t^+$ generated by $\H_+ v_\pm^\top$ 
and apply \eqref{commest1} to commute\footnote{
Notice that the presence of $\rho_-$ is necessary when performing this commutation since $v_-$ is involved.} 
$\mathbf{D}_\tau$ and $\Delta_{S_t}$ obtaining:
\begin{eqnarray*}
& & \left| K^{(1)}_\pm + \frac{2 \rho_-}{ \rho_+ + \rho_-} 
				\int_{S_t} \nabla_{\pm v_\pm^\top} 
				( - \Delta_{S_t} )  v_+^\bot
				\bar{\N} { (- \Delta_{S_t} \bar{\mathcal{N}}) }^{2k-2} \mathbf{D}_{t_+} \k_+
				\, dS  \right| \leq Q \, .
\end{eqnarray*}
From \eqref{D_tkappa} we have
\begin{equation*}
\rho_- { | - \Delta_{S_t} v_+^\bot  -  \mD_{t_+} \k_+  +  \nabla_{v_+^\top} \k_+| }_{H^{3k - \frac{3}{2}} (S_t)} 
= \rho_- { | v_-^\top {|\Pi|}^2| }_{H^{3k - \frac{3}{2}} (S_t)}
\leq Q
\end{equation*}
so that
\begin{eqnarray*}
& & \left| K^{(1)}_\pm  -
	\frac{2 \rho_-}{ \rho_+ + \rho_-} 
	\int_{S_t} \nabla_{\pm v_\pm^\top} \mathbf{D}_{t_+} \k_+
	\bar{\N} { (- \Delta_{S_t} \bar{\mathcal{N}}) }^{2k-2} \mathbf{D}_{t_+} \k_+
	\, dS
	\right.
	\\
	\\
	& + & \left.
	\frac{2 \rho_-}{ \rho_+ + \rho_-} 
				\int_{S_t} \nabla_{\pm v_\pm^\top} 
				\nabla_{v_+^\top} \k_+
				\bar{\N} { (- \Delta_{S_t} \bar{\mathcal{N}}) }^{2k-2} \mathbf{D}_{t_+} \k_+
				\, dS
				\right| \leq Q	\, .
\end{eqnarray*}
By the same previous commutation trick applied to the tangential derivatives, 
and the fact that $\bar{\N}$ and $-\Delta_{S_t}$ are self-adjoint, we have
\begin{align*}
& \rho_-  \,\, \left|  \int_{S_t} \nabla_{\pm v_\pm^\top} 
		\mathbf{D}_{t_+} \k_+
		\bar{\N} { (- \Delta_{S_t} \bar{\mathcal{N}}) }^{2k-2} \mathbf{D}_{t_+} \k_+
		\, dS
		\right.
		\\
		& -
 		\left.
 		\int_{S_t}  \frac{1}{2} \nabla_{\pm v_\pm^\top} \left[
		\mathbf{D}_{t_+} \k_+
			\bar{\N} { (- \Delta_{S_t} \bar{\mathcal{N}}) }^{2k-2} \mathbf{D}_{t_+} \k_+ \right]
			\, dS
			\right| \leq Q \, .
\end{align*}
We can integrate by parts the tangential derivatives in the last integral obtaining
\begin{align*}
\rho_- & \left|  \int_{S_t}  \frac{1}{2} \nabla_{v_\pm^\top} \left[
					\mathbf{D}_{t_+} \k_+
			\bar{\N} { (- \Delta_{S_t} \bar{\mathcal{N}}) }^{2k-2} \mathbf{D}_{t_+} \k_+ \right]  \, dS  \right|
			\\
			& \leq
			C \rho_- {|D v_\pm^\top|}_{L^\infty (S_t)}  {| \mathbf{D}_{t_+} \k_+ |}_{H^{3k - \frac{5}{2}} (S_t)}
			\leq Q  \, .
\end{align*}
Therefore,
\begin{eqnarray*}
& & \left| K^{(1)}_\pm  -   \frac{2 \rho_-}{ \rho_+ + \rho_-} 
			\int_{S_t} \nabla_{\pm v_\pm^\top} 
			\nabla_{v_+^\top} \k_+
			\bar{\N} { (- \Delta_{S_t} \bar{\mathcal{N}}) }^{2k-2} \mathbf{D}_{t_+} \k_+
			\, dS
			\right| \leq Q	\, .
\end{eqnarray*}
Integrating by parts and applying the usual commutation trick we can conclude
\begin{eqnarray}
\label{estimateK^1_+}
& & \left| K^{(1)}_+   + 
				  \frac{\rho_-}{ \rho_+ + \rho_-} 
					\frac{d}{dt}   \int_{S_t}
					\nabla_{v_+^\top} \k_+
				  \bar{\N} { (- \Delta_{S_t} \bar{\mathcal{N}}) }^{2k-2} \nabla_{v_+^\top}  \k_+
					\, dS
				  \right|  %=  \left| K^{(1)}_+ - E_{ex}^{(1)} \right|  
				  \leq Q	\, .
\end{eqnarray}
We can handle similarly $K_-^{(1)}$
%\begin{eqnarray*}
%& & \left| K^{(1)}_-  +
%				  \frac{2 \rho_-}{ \rho_+ + \rho_-} 
%					\int_{S_t} 
%					\nabla_{v_-^\top} \nabla_{v_+^\top} \k_+
%				  \bar{\N} { (- \Delta_{S_t} \bar{\mathcal{N}}) }^{2k-2}   \k_+
%					\, dS
%				  \right| \leq Q	\, ;
%\end{eqnarray*}
integrating again by parts, commuting the tangential derivatives, and pulling out $\mD_{t_+}$:
\begin{eqnarray}
\label{estimateK^1_-}
& & \left| K^{(1)}_-  -
				  \frac{\rho_-}{ \rho_+ + \rho_-}   \frac{d}{dt}
					\int_{S_t}
					\nabla_{v_+^\top} \k_+
				  \bar{\N} { (- \Delta_{S_t} \bar{\mathcal{N}}) }^{2k-2} \nabla_{v_-^\top}  \k_+
					\, dS
				  \right| \leq Q	\, .
\end{eqnarray}
Notice that the integrals in \eqref{estimateK^1_+} and \eqref{estimateK^1_-} 
constitute part of $E_{\ex}^{(2)}$.
The remaining contribution is going to come from the terms in \eqref{K_pm^2} involving $\Pi_\pm$.

%%%%%%%%%%%%%%%%%%%%%%
%%%%%%%%%%%%%%%%%%%%%%

\nl {\it Estimate of $K_\pm^{(2)}$}:
Since 
$ \D^2 \k_\pm = \nabla_{v_\pm^\top} \nabla_{v_\pm^\top} \k_\pm - \D_{v_\pm^\top} v_\pm^\top \cdot \nabla \k_\pm$,
from \eqref{formulaPIk} we get
\begin{equation*}
\rho_- \left| - \Delta_{S_t} ( \Pi_\pm (v_\pm^\top, v_\pm^\top) 
	+  \nabla_{v_\pm^\top} \nabla_{v_\pm^\top} \k_\pm  
	\right|_{H^{\frac{3}{2}k - \frac{5}{2}} (S_t) }  \leq  Q   \,  .
\end{equation*}
Therefore,
\begin{eqnarray*}
\left| K_\pm^{(2)}  +    \frac{ \rho_- }{\rho_+  +  \rho_-}   \int_{S_t}   
		\nabla_{v_\pm^\top} \nabla_{v_\pm^\top} \k_\pm
		\bar{\N} { (- \Delta_{S_t} \bar{\mathcal{N}}) }^{2k-2} \mathbf{D}_{t_+} \k_+  \, dS \right| 
		\leq  Q \, .
\end{eqnarray*}
The usual integration by parts and commutation give
\begin{align}
\label{estimateK^2_+}
& \left| K^{(2)}_+   -  \frac{\rho_-}{ 2 (\rho_+ + \rho_-) }   \frac{d}{dt}
			\int_{S_t}  \nabla_{v_+^\top} \k_+
				\bar{\N} { (- \Delta_{S_t} \bar{\mathcal{N}}) }^{2k-2} \nabla_{v_+^\top}  \k_+   \, dS
				\right| \leq Q \, ,
\\
& \left| K^{(2)}_-    +    \frac{\rho_-}{ 2 (\rho_+ + \rho_-) }   \frac{d}{dt}
			\int_{S_t}
			\nabla_{v_-^\top} \k_+
			\bar{\N} { (- \Delta_{S_t} \bar{\mathcal{N}}) }^{2k-2} \nabla_{v_-^\top}  \k_+
			\, dS
			\right| \leq Q \,\, .
\label{estimateK^2_-}
\end{align}
Gathering \eqref{estimateK^1_+}, \eqref{estimateK^1_-}, \eqref{estimateK^2_+} and \eqref{estimateK^2_-}
we have
\begin{align*}
& \left|  K_+^{(1)} + K_-^{(1)} + K_+^{(2)} + K_-^{(2)}  \right.
					\\
					& +
					\left.
					\frac{\rho_-}{ 2 (\rho_+ + \rho_-) }  \frac{d}{dt}   \int_{S_t}
					\nabla_{v_+^\top  - v_-^\top} \k_+
				  \bar{\N} { (- \Delta_{S_t} \bar{\mathcal{N}}) }^{2k-2} \nabla_{v_+^\top  -  v_-^\top}  \k_+
					\, dS
					\right| 
					\\
					& = \left| K_+^{(1)} + K_-^{(1)} + K_+^{(2)} + K_-^{(2)}  - \frac{d}{dt} E_{\ex}
					\right|   \leq Q  \, .
\end{align*}
The above estimate and \eqref{dtE-K} prove \eqref{evolutionE}  $_\Box$

%%%%%%%%%%%%%%%%%%%%%%%%
%%%%%%%%%%%%%%%%%%%%%%%%
%%%%%%%%%%%%%%%%%%%%%%%%
%%%%%%%%%%%%%%%%%%%%%%%%

\subsubsection{The Energy Inequality}
\label{secenergyinequality}
To conclude the proof of theorem \eqref{teoEE} we need to control the extra energy term $E_{\ex}$.
%using assumption \eqref{condition}.
Integrating in time \eqref{evolutionE} gives
\begin{equation}
E (t) - E(0) - E_{\ex} (t) + E_{\ex} (0) \leq  \int_0^t Q \left( s \right) \, ds
\label{Einequality}
\end{equation}
for any $0 \leq t \leq \min \{ t_0, t_2\}$. %(see \eqref{deft_0} and \eqref{deft_2}).
Since $3k - \frac{5}{2} > \frac{n-1}{2}$, we can estimate the extra energy term \eqref{E_ex} by %\footnote{
%Use the inequality
%\begin{equation*}
%{|f g|}_{ H^{s_1} (S_t) } \leq {|f|}_{ H^{s_1} (S_t) } {|g|}_{ H^{s_2} (S_t) }
%\end{equation*}
%valid for $s_2 \geq s_1$, $s_2 > (n-1)/2$ and $s_1 + s_2 \geq 0$
%together with the fact that 
%$\frac{3}{2} k > \frac{n}{2} + 1  \Leftrightarrow \frac{3}{2} k  - \frac{3}{2} > \frac{n-1}{2}$.
%}
\begin{align*}
| E_{\ex} | & 
\leq   C  \rho_-  \int_{S_t} {\left| \bar{\mathcal{N}}^\frac{1}{2} {(\Delta_{S_t} \bar{\mathcal{N}} )}^{k-1} 
		\nabla_{v_+^\top  -  v_-^\top} \k_+  \right| }^2    \, dS
		\\
		& \leq
		C \rho_-  {|v (t, \cdot)|}^2_{ H^{3k - 2} (\R^n \minus S_t) }
		{|\kappa_+ (t, \cdot)|}^2_{ H^{3k - \frac{3}{2}} (S_t) }
\end{align*}
where $C$ depends only on the set $\Lambda_0$. %and $0 < \alpha < \frac{1}{2}$ is to be determined below.
Interpolating 
%$v$ between $H^{3k - \frac{3}{2} }$ and $H^{3k-1}$,
$\k_+$ between $H^{3k - \frac{5}{2} }$ and $H^{3k - 1}$, and using \eqref{ekappaenergy},
\eqref{venergy+}, and \eqref{venergy-}, we get
\begin{eqnarray*}
| E_{\ex} |  & \leq &  C_1 \rho_- {|v (t, \cdot)|}^2_{ H^{3k - 2} (\R^n \minus S_t) }
%{|v (t, \cdot)|}^{2 (1 - 2 \alpha)}_{ H^{3k - \frac{3}{2} } (\R^n \minus S_t) }
%{|v (t, \cdot)|}^{ 4\alpha }_{ H^{3k - 1} (\R^n \minus S_t) }
			{|\kappa_+ (t, \cdot)|}^{4/3}_{ H^{3k - 1} (S_t) }
			\\
			\\
			& \leq & C_1 \rho_- \e^{- 4/3} E^{2/3} 
%{ \left(  \e^{2/3} {|v (t, \cdot)|}_{ H^{3k - \frac{3}{2} } (\R^n \minus S_t) } \right)}^{2 - 4 \alpha}
{|v (t, \cdot)|}^2_{ H^{3k - 2} (\R^n \minus S_t) }
\end{eqnarray*}
where the constant $C_1$,
which includes ${|\k_+|}_{H^{3k - \frac{5}{2}} }$, %and $E_0$, 
depends ultimately only on the initial data and $\Lambda_0$. %in view of \eqref{estklow}.
Then, we see that if $\rho_- = o(\e^{4/3})$,
%$\alpha < \frac{1}{6}$ and $\rho_- \leq \e^{8/3 (1 - \alpha) }$, 
as it is guaranteed by \eqref{condition},
% if we choose $\alpha = \frac{1}{8}$, we can estimate
\begin{eqnarray*}
| E_{\ex}| & \leq & 
%C_1 \rho_- \e^{-20/9} \left(  1 + E^{7/9}  \right) 
%{ \left(  1 + \e^{2/3} {|v (t, \cdot)|}_{ H^{3k - \frac{3}{2} } (\R^n \minus S_t) } \right)}^{4/3}
%\\
%\\
%& \leq &  
\frac{1}{2} E  +  C_1 {|v (t, \cdot)|}^6_{ H^{3k - 2} (\R^n \minus S_t) } 
%\right)} 
\, .
\end{eqnarray*}
%for some integer $m$.
%
%
%
%
%
%Using Euler's equation \eqref{Ematerial}
%\begin{equation*}
%{ |\mathbf{D}_t v | }_{H^{3k - \frac{3}{2}} (\R^n \minus S_t) } \leq
%										\e^2 { |\nabla p_\k | }_{H^{3k - \frac{3}{2}} (\R^n \minus S_t) }
%										+
%										{ |\nabla p^+_{v,v} | }_{H^{3k - \frac{3}{2}} (\Omega_t^+) }  
%										+
%										{ |\nabla p^-_{v,v} | }_{H^{3k - \frac{3}{2}} (\Omega_t^-) }  
%										\leq Q \,  .
%\end{equation*}
In view of estimate \eqref{estimateD_tv} on $\mD_t v$,
%$\begin{equation*}
%\e { |\mathbf{D}_t v | }_{H^{3k - \frac{3}{2}} (\R^n \minus S_t) } \leq Q \,  .
%\end{equation*}
we can use the Lagrangian coordinate map to get
\begin{equation*}
%\e^{2m/3} 
\left| {| v (t, \cdot) |}_{H^{3k - 2} (\R^n \minus S_t) }^6 -
				{| v (0, \cdot) |}_{H^{3k - 2} (\R^n \minus S_0) }^6 \right| \leq \int_0^t Q(s) \, ds  \,  .
\end{equation*}
Therefore,
\begin{equation*}
| E_{\ex} | \leq \frac{1}{2} E  +  C_1  \left( 1 + {|v (0,\cdot)|}^6_{ H^{3k - 2} (\R^n \minus S_0)} \right)
							+ \int_0^t Q(s) \, ds
							\leq
							\frac{1}{2} E  +  C_2 +  \int_0^t Q(s) \, ds
\end{equation*}
where $C_2$ is determined by $E_0$, the set $\Lambda_0$, and
${|v (0,\cdot)|}_{H^{3k - 2 } (\R^n \minus S_0)}$.
Inserting this last inequality in \eqref{Einequality} we finally obtain \eqref{energyestimate}.
%\begin{equation*}
%E (S_t, v(t,\cdot)) \leq 3 E (S_0, v(0,\cdot)) + C_2 + \int_0^t Q(s) \, ds
%\end{equation*}
%for some $C_2$ as above. 
Therefore, the energy is uniformly bounded by some constant depending only on $\Lambda_0$ and the initial data;
choosing $\mu$ in \eqref{t_0} large enough compared to the initial data 
concludes the proof of theorem \ref{teoEE}
$_\blacksquare$

%%%%%%%%%%%%%%%%%%
%%%%%%%%%%%%%%%%%%
%%%%%%%%%%%%%%%%%%

\subsection{Proof of corollary \ref{corconvergence}}
The proof of strong convergence of solutions requires only some standard compactness arguments
that we are going sketch in what follows.
Let us consider any sequence of solutions of \eqref{E}-\eqref{BC} as in corollary \ref{corconvergence}
dropping the indices $m$ for convenience. Let us also denote by
$X H^l (D)$ the space
$X ( [0,T^\infty] ; H^l (D) )$ for $X = L^\infty$ or $C$
where $T^\infty$ is as in theorem \ref{teoVS1}.
Observe that the uniform bound \eqref{unifEbound} guarantees, through proposition \ref{teoVS1}, that
\begin{align}
\nonumber
& 
{| \kappa |}_{ L^\infty H^{3k-2}(S_t) } 
\hskip5pt , \hskip 7pt 
\e {| \kappa |}_{ L^\infty  H^{3k-1}(S_t) } 
\hskip5pt , \hskip 7pt 
{| v_+ |}_{ L^\infty  H^{3k} (\Omega_t^+) }  
\hskip5pt , \hskip 7pt
\\
& 
{| v_- |}_{ L^\infty  H^{3k - 1} (\Omega_t^-) }  
\hskip5pt , \hskip 7pt
\sqrt{\e} {| v_- |}_{ L^\infty  H^{3k} (\Omega_t^-) }  \leq  C_0 \, ,
\label{unifbounds}
\end{align}
for some constant $C_0$ depending only the initial data and the set $\Lambda_0$,
as in theorem \ref{teoVS1}. From now on we denote by $C_0$ any such generic constant.

%%%%%%%%%

Since we want to prove convergence in Lagrangian coordinates, the first step is to use \eqref{t_1}
and the uniform bounds on $v_+$ to obtain
\begin{equation*}
{|u_+|}_{L^\infty H^{3k} (\Omega_0^+)} \hskip5pt , \hskip 7pt  
{|\partial_t u_+|}_{L^\infty H^{3k} (\Omega_0^+)} \leq C_0 \, .
\end{equation*}
This shows, via the Ascoli-Arzel\'{a} theorem, that there exist a diffeomorphism 
$u^\infty \in CH^{3k}(\Omega_0^+)$
such that $u_+ \rightarrow u^\infty$ in $C^\infty H^{3k} (\Omega_0^+)$.
%
%
%%%%%%%%%
%
%
For the velocity field $v_+$ we immediately see from Euler equations \eqref{Ematerial},
and estimates \eqref{estimateep_k} and \eqref{estimatep_vv}, that
\begin{equation*}
{| \partial_t (v_+ \circ u_+) |}_{ L^\infty H^{3k - 3/2} (\Omega_0^+) } 
\leq C_0 {| \mD_{t_+} v_+ |}_{ L^\infty H^{3k - 3/2} (\Omega_t^+) }  \leq  C_0 \, .
\end{equation*}
Using again Ascoli-Arzel\'{a} and interpolation of Sobolev norms, 
this implies the existence of a field $v^\infty \in L^\infty H^{3k^\p} (\Omega_t^\infty)$
such that
\begin{equation}
v_+ \circ u_+ \longrightarrow v^\infty \circ u^\infty
\hskip 10pt  \mbox{in} \hskip12pt C H^{3k^\p} (\Omega_0^+)
%C H^{3k - 3/4} (\Omega_0^+)
\end{equation}
for any $k^\p < k$. It is also clear that $v^\infty$ is divergence-free on $\Omega_t^\infty := u^\infty (\Omega_0^+)$.

%%%%%%%%%

To prove that $v^\infty$ satisfies \eqref{E_0} pointwise, we need to obtain strong convergence of 
the time derivative of the velocity $ \partial_t (v_+ \circ u_+)$.
Using the same arguments above this reduces to check the boundedness of $\partial_t^2 (v_+ \circ u_+)$
or, equivalently, the boundedness of $\mD_{t_+} p_+$.
This can be directly obtained from the definition of $p_+$ in \eqref{physicalp},
commutator estimates in lemma \ref{lemcomm}, \eqref{estimateN-1}, and the uniform bounds \eqref{unifbounds}
which yield
\begin{equation*}
{| \partial_t^2 (v_+ \circ u_+) |}_{ L^\infty H^{3k - 3} (\Omega_0^+) } 
\leq C_0 {| \mD_{t_+} p_+ |}_{ L^\infty H^{3k - 3} (\Omega_t^+) }  \leq  C_0 \, .
\end{equation*}
The regularity of the boundary $S_t^\infty := \partial \Omega_t^\infty$ follows 
again from the same arguments since
\begin{equation*}
{|\k_+|}_{L^\infty H^{3k - 2} (S_t)} \hskip5pt , \hskip 7pt  
{|\mD_{t_+}  \k_+|}_{L^\infty H^{3k - 5/2} (S_t)} \leq C_0 \, .
\end{equation*}
Finally, again from \eqref{physicalp}, \eqref{estimateN-1}, and \eqref{unifbounds},
it is easy verify that
\begin{equation*}
{| p_- |}_{L^\infty H^{3k - 2} (S_t)}  \longrightarrow  0
\end{equation*}
so that the boundary condition \eqref{BC_0} for the pressure is also satisfied
$_\blacksquare$

\section*{Acknowledgments}
The author is indebted with Jalal Shatah and Chongchun Zeng for many enlightening discussions on the subject.
He also wishes to thank the anonymous referee for his thorough reading of the manuscript and helpful comments.

\appendix

\section{Supporting material for the proofs}
\label{tools}
In this appendix we collect some tools from \cite{shatah1,shatah2,shatah3} 
which are frequently used in our proofs. 
We first state well-know basic elliptic estimates.
The main point here is that the constants involved in these estimates are uniform over $\Lambda_0$.

%%%%%%%%%%%%%%%%%%%%%%%%%
%
%  HARMONIC EXTENSION
%  and
%  INVERSE LAPLACIAN
%
%%%%%%%%%%%%%%%%%%%%%%%%%
\begin{lem}
\label{lemmaoperator1}
Let $\Delta^{-1}$ and $\H$ denote respectively the inverse Laplacian with Dirichlet boundary condition
and the harmonic extension operator.
Then there exists a uniform constant $C > 0$ such that for every domain $\Omega$
with $\partial \Omega := S \in \Lambda_0$
\begin{eqnarray}
\label{estimaterestriction}
& & {| {\left. f  \right |}_{S}  |}_{H^{s} (S) }  \leq 
															C {| f |}_{H^{s + \frac{1}{2}} (\Omega)}
																					\hskip8pt , \hskip8pt \for s > 0   \,  ,
\\
\label{estimateH}
& & {| \nabla \H |}_{L ( H^{s} (S), H^{s - \frac{1}{2}} (\Omega ) )} \leq C  
															\hskip8pt , \hskip8pt   \for s \in [0, 3k- 3/2] \, ;
\end{eqnarray}
moreover, for any $g$ in\footnote{
By $\dot{H}^1_0 (\Omega)$ we denote the completion of $C^\infty$ functions supported in $\Omega$
under the metric ${|\nabla g|}_{L^2 (\Omega)}$ and by ${(\dot{H}^1_0 (\Omega))}^\ast$ its dual.
} 
$H^s (\Omega) \cap {(\dot{H}_0^1 (\Omega) )}^\ast$ there exists a unique
$ q = \Delta^{-1} g$  such that
\begin{eqnarray}
\label{estimateD-1}
{| \nabla q |}_{ H^{s} (\Omega) }  \leq C \left(  {| g |}_{ H^{s-1} (\Omega) }
		+  {| g |}_{ {(\dot{H}^1_0 (\Omega))}^\ast } \right)
		\hskip8pt , \hskip8pt   \for s \in [0, 3k-1]  \, .
\end{eqnarray}
%
%
%
%\begin{eqnarray}
%\label{estimateDH}
%{| \nabla \Delta^{-1} |}_{L ( H^{s-1} (\Omega), H^{s+1} (\Omega ) )}  & + &
%{| \nabla \H |}_{L ( H^{s + \frac{1}{2} } (S), H^{s+1} (\Omega ) )}  \leq C  \hskip8pt , \hskip8pt 
%																	\for s \in (0, 3k-1] \, .
%\end{eqnarray}
\end{lem}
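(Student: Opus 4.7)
The plan is to reduce all three estimates to their classical counterparts on a fixed reference domain (e.g.\ $\Omega_0$ with boundary $S_0$), by pulling back via the diffeomorphism $F:S_0\to\tilde S$ provided by Definition~\ref{defLambda}, and then verify that the constants produced by the reduction depend only on $\Lambda_0$. Concretely, because $F$ satisfies $|F-\id_{S_0}|_{H^{3k-1/2}(S_0)}<\delta$ with $\delta$ small and $|\k|_{H^{3k-5/2}(\tilde S)}<L$, one can extend $F$ to a global diffeomorphism $\Phi:\R^n\to\R^n$ mapping $\Omega_0^\pm$ onto $\Omega^\pm$ such that $|\Phi-\id|_{H^{3k}(\R^n)}$ and $|D\Phi^{-1}|_{H^{3k-1}}$ are controlled by a constant depending only on $\Lambda_0$. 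This is what makes the pullback of Sobolev norms uniformly equivalent to the reference ones, via standard composition estimates in Sobolev spaces (valid in the indicated range $s\le 3k-3/2$, well below the regularity threshold of the change of variables).

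For the trace inequality \eqref{estimaterestriction}, I would first write $f\circ\Phi\in H^{s+1/2}(\Omega_0)$, with norm comparable to $|f|_{H^{s+1/2}(\Omega)}$ up to a constant depending on $\Lambda_0$; then apply the classical trace theorem on $\Omega_0$, and pull the trace back to $S$ via $F$. For the harmonic extension \eqref{estimateH}, I would transport the equation $\Delta \H f=0$ under $\Phi$, obtaining a uniformly elliptic divergence-form equation $\partial_i(a^{ij}\partial_j u)=0$ on $\Omega_0$ with coefficients $a^{ij}\in H^{3k-1}$ whose norms are controlled by $\Lambda_0$; the classical boundary regularity for this elliptic problem in $H^{s+1/2}(\Omega_0)$ gives the estimate for $s\in[0,3k-3/2]$, with a constant depending only on the ellipticity bounds and coefficient norms, hence only on $\Lambda_0$. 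The Dirichlet problem \eqref{estimateD-1} is treated in exactly the same way: the $L^2$-theory (energy estimate) handles the $(\dot H^1_0)^*$ contribution, while the higher regularity $s\ge 1$ follows by the standard elliptic bootstrap on the pulled-back equation, with the two terms on the right-hand side arising respectively from the local and global parts of the standard Calder\'on--Zygmund-type estimates.

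The only genuinely delicate point will be verifying that the constants in the composition/Moser estimates used to transport Sobolev norms under $\Phi$ and to bound the coefficients $a^{ij}$ depend only on $\Lambda_0$ and not on $\tilde S$. For this I would rely on the fact that $3k-3/2>\frac n2+1/2$ so that $\Phi$ is $C^1$ with $D\Phi$ in an algebra of Sobolev functions, making composition and multiplication continuous, and the argument of \cite[Appendix A]{shatah1} (to which the paper already refers for the construction of such uniform local coordinates on $\Lambda_0$) applies verbatim. Once the uniform change-of-variables machinery is in place, the remaining steps are the textbook elliptic estimates on a fixed smooth bounded domain, so no further obstacle arises.
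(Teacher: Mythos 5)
Your plan is correct and coincides with the paper's own treatment: the paper simply defers \eqref{estimaterestriction} and \eqref{estimateH} to the construction of uniform local coordinates on $\Lambda_0$ in \cite[A.1]{shatah1} (precisely the uniform change-of-variables/elliptic-regularity reduction you describe) and notes that \eqref{estimateD-1} is a standard elliptic estimate. The only point you flag as delicate — uniformity of the composition constants over $\Lambda_0$ — is exactly what that appendix supplies, so no gap remains.
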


\nl
The proof of \eqref{estimaterestriction} and \eqref{estimateH} 
is based on the construction of a suitable set of coordinates on $\Lambda_0$ 
and can be found in \cite[A.1]{shatah1}.
\eqref{estimateD-1} is just a standard elliptic estimate. %\footnote{
% We want to point out that the estimates in lemma \ref{lemmaoperator1} 
% are weaker than the ones that can be found in the Appendix of {shatah3}. 
% There, a wider range of Sobolev exponents is considered, 
% hence the need for the hypersurfaces S to be more regular and the explicit dependence on ${|\k|}_{H^{3k - 3/2}}$.
%}

%%%%%%%%%%%%%%%%%%%%%%%%%%%
%
% N-D OPERATOR
%
%%%%%%%%%%%%%%%%%%%%%%%%%%%

\begin{lem}[{\bf Dirichlet-Neumann operator}]
\label{lemmaoperator2}
Let $\Omega_+ , \Omega_-$ be respectively a bounded and an unbounded domain,
such that $\R^n = \Omega_+ \cup \Omega_- \cup S$ with $S := \partial \Omega_\pm$.
The Dirichlet-Neumann operator $\N_\pm$ relative to $\Omega_\pm$ can be defined
%\footnote{In its weak form
%\begin{equation*}
%\langle \varphi , \mathcal{N}_0 ( f ) \rangle = \int_{\Omega} \nabla \H \varphi \nabla \H f \, .
%\end{equation*}}
for any $f \in H^s (S)$, $s \geq \frac{1}{2}$ and satisfies\footnote{
By $\dot{H}^s (S)$ we denote $H^s (S)$-functions with average zero.
As in \cite[Appendix A]{shatah3} we remark that since $\Omega_+$ is compact,
$\N_+$ is semipositive definite with its range being some $\dot{H}^s (S)$ space.
Therefore $\N_+^{-1}$ will always denotes the composition of the inverse of $\N_+$
with the $L^2$ orthogonal projection on functions of average zero.
Since $\Omega_-$ is unbounded, there is no restriction on $\N_-^{-1}$ for $n > 2$.
However, if $n=2$, $\N_-^{-1}$ still denotes the composition of the inverse of $\N_-$ with the
$L^2$ orthogonal projection on functions with average zero.
}
\begin{equation}
\label{estimateN0}
{| \N_\pm |}_{L ( H^{s + \frac{1}{2} } (S), H^{s - \frac{1}{2} } (S) )}  +
							{| \N_\pm^{-1} |}_{L ( \dot{H}^{s - \frac{1}{2} } (S), 
																						\dot{H}^{s + \frac{1}{2}} (S) )}
							\leq C  \,\,  ,
							\hskip6pt \for  s \in [0, 3k - 1] 
\end{equation}
for any $S \in \Lambda_0$.
In particular, if $\N$ and $\bar{\N}$ are the operators defined respectively in \eqref{N} and \eqref{barN} then
for the same $C$ as above
\begin{eqnarray}
\label{estimateN-1}
{ |  \mathcal{N}^{-1}  | }_{L ( \dot{H}^{s - \frac{1}{2} } 
															(S),  \dot{H}^{s + \frac{1}{2}} (S) )} 
							& \leq & 
							2 C \rho_-	 \,\,  ,  \hskip6pt \for  s \in [0, 3k - 1]  \,\, 
							\mbox{and} \,\, \rho_- \leq \frac{\rho_+}{2 C^2}  \, ,
\\
\nonumber
{| \bar{\N} |}_{L ( H^{s + \frac{1}{2}} (S), H^{s - \frac{1}{2}} (S) )}  
													& \leq & \frac{2 C^3 }{ \rho_+ }  
													\,\,  ,   \hskip6pt \for  s \in [0, 3k - 1]  \, ,
\\
\nonumber
{| \bar{\N}^{-1} |}_{L ( \dot{H}^{s - \frac{1}{2}} (S), \dot{H}^{s + \frac{1}{2}} (S) )}  
													& \leq & (\rho_- + \rho_+) C^3
													\,\,  ,   \hskip6pt \for  s \in [0, 3k - 1]  \, .
\end{eqnarray}
Moreover
\begin{eqnarray}
\label{N_+N^-1}
& & {\left| \N_\pm \N^{-1} - \frac{\rho_- \rho_+}{\rho_+ + \rho_-}  \right|}_{L 
													( \dot{H}^{s - \frac{1}{2}} (S), \dot{H}^{s + \frac{1}{2}} (S) )}  \leq  C \rho_-
													\hskip8pt , \hskip8pt \for  s \in [0 , 3k - 1]
\\
\nonumber
\\
\label{N_+barN}
& & {\left| (\rho_+ + \rho_-) \bar{\N}  -  \N_+ \right|}_{L 
													( H^{s + \frac{1}{2}} (S), H^{s + \frac{1}{2} } (S) )}  \leq C
													\hskip8pt , \hskip8pt \for  s \in [0, 3k - 1]
\end{eqnarray}
for some other $C$ uniform in $\Lambda_0$.
\end{lem}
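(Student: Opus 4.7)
The plan is to establish the estimates in the order \eqref{estimateN0}, then \eqref{estimateN-1}, then the bounds on $\bar{\N}$ and $\bar{\N}^{-1}$, and finally \eqref{N_+N^-1} and \eqref{N_+barN}. Throughout, the key algebraic observation to exploit is that $\N_+ - \N_-$ is an operator of order $0$, since the Dirichlet-to-Neumann operators from opposite sides of the same hypersurface share the same principal symbol $|\xi'|$ in local coordinates; equivalently, the jump of the normal derivative of the common harmonic extension from both sides acts boundedly on $H^{s+\frac{1}{2}}(S)$.

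First, the basic bound \eqref{estimateN0} is standard elliptic theory for the Dirichlet-Neumann operator: one produces $\N_\pm f$ as the normal trace of $\nabla \H_\pm f$ and then invokes the estimate \eqref{estimateH} together with \eqref{estimaterestriction}; the uniformity of the constants over $\Lambda_0$ follows from the construction of local coordinates carried out in \cite[Appendix A.1]{shatah1}. The estimate for $\N_\pm^{-1}$ is the Lax-Milgram formulation of the exterior/interior Dirichlet problem with Neumann data, reducing everything to $H^1$-regularity plus elliptic regularity on the same coordinate charts.

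Next, I would obtain \eqref{estimateN-1} by factoring out the dominant piece as $\rho_-\to 0$. Writing
\begin{equation*}
\N = \rho_-^{-1}\N_-\bigl(I + (\rho_-/\rho_+)\N_-^{-1}\N_+\bigr),
\end{equation*}
the operator $\N_-^{-1}\N_+$ is bounded on $\dot{H}^{s+\frac{1}{2}}(S)$ with norm at most $C^2$ thanks to \eqref{estimateN0}. Under the hypothesis $\rho_- \leq \rho_+/(2C^2)$ the perturbation is of size at most $1/2$, so a Neumann series inversion gives $\|\N^{-1}\|\leq 2C\rho_-$. Once \eqref{estimateN-1} is in hand, the bound on $\bar{\N}$ follows by composing \eqref{estimateN0} with \eqref{estimateN-1}, while the bound on $\bar{\N}^{-1}$ reduces to the transparent algebraic identity
\begin{equation*}
\bar{\N}^{-1} = \rho_+\rho_-\,\N_-^{-1}\N\,\N_+^{-1} = \rho_-\N_-^{-1} + \rho_+\N_+^{-1},
\end{equation*}
whereupon \eqref{estimateN0} closes the estimate.

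For \eqref{N_+N^-1} and \eqref{N_+barN} I would perform a similar algebraic manipulation to isolate $\N_+ - \N_-$, which is the one non-obvious ingredient and which I expect to be the main obstacle. Specifically, the identity
\begin{equation*}
\N_+\N^{-1} - \frac{\rho_+\rho_-}{\rho_++\rho_-} = \frac{\rho_+}{\rho_++\rho_-}\,\N_+\N^{-1}\N_+^{-1}(\N_+ - \N_-),
\end{equation*}
together with an analogous identity
\begin{equation*}
(\rho_++\rho_-)\bar{\N} - \N_+ = \frac{1}{\rho_+}\,\N_+\N^{-1}(\N_- - \N_+),
\end{equation*}
(each obtained by expanding $\N = \N_+/\rho_+ + \N_-/\rho_-$ and collecting terms) reduces everything to the claim that $\N_+ - \N_-$ is bounded as an operator on $H^{s+\frac{1}{2}}(S)$. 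To justify this last point uniformly on $\Lambda_0$, I would argue via the double-layer / transmission potential: the sum $\N_+f + \N_-f$ equals the jump in the normal derivative of the unique function that is harmonic in $\R^n\setminus S$ and equal to $f$ on $S$ from both sides, and this jump is expressed as an integral operator with a kernel whose principal singularity cancels, yielding an operator of order zero that is bounded uniformly over $\Lambda_0$. Plugging in \eqref{estimateN0} and \eqref{estimateN-1} with the condition $\rho_-\leq \rho_+/(2C^2)$ then collapses the two bounds to the stated form, completing the lemma.
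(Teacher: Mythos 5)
Your overall route coincides with the paper's: \eqref{estimateN0} is quoted from the elliptic theory of \cite[A.2]{shatah1}; \eqref{estimateN-1} is obtained by exactly the same Neumann-series factorization $\N=\rho_-^{-1}\N_-(I+B)$ with $\|B\|\le C^2\rho_-/\rho_+\le 1/2$; and the key input for \eqref{N_+N^-1}--\eqref{N_+barN} is the fact that $\N_+$ and $\N_-$ agree to leading order. The paper phrases this last input as $\N_\pm-(-\Delta_S)^{1/2}$ being bounded on $H^{s+\frac12}(S)$ (Theorem A.8 of \cite{shatah1}) and then compares $(-\Delta_S)^{1/2}\N^{-1}$ with $\rho_+\rho_-/(\rho_++\rho_-)$, whereas you work directly with $\N_+-\N_-$; the two formulations are equivalent and your identity for $(\rho_++\rho_-)\bar\N-\N_+$ is correct. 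Your identity $\bar\N^{-1}=\rho_-\N_-^{-1}+\rho_+\N_+^{-1}$ is also correct (modulo the mean-zero projections) and is cleaner than composing norms.

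Two slips, however. First, in your final paragraph you justify the order-zero property by asserting that the \emph{sum} $\N_+f+\N_-f$ --- the jump of the normal derivative of the two-sided harmonic extension --- is of order zero. It is not: since both $\N_+$ and $\N_-$ have principal symbol $|\xi'|$, the sum is an elliptic operator of order one (symbol $2|\xi'|$), and it is the \emph{difference} $\N_+-\N_-$ that is of order zero, as you correctly state at the outset. As written, your layer-potential argument bounds the wrong operator; you should either correct it (the order-zero quantity is $\nabla_{N_+}\H_+f+\nabla_{N_+}\H_-f=\N_+f-\N_-f$, whose kernel singularities cancel) or simply invoke $\N_\pm-(-\Delta_S)^{1/2}$ bounded as the paper does. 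Second, your displayed identity for $\N_+\N^{-1}-\rho_+\rho_-/(\rho_++\rho_-)$ has the factors in the wrong order: writing $\N_+-\frac{\rho_+\rho_-}{\rho_++\rho_-}\N=\frac{\rho_+}{\rho_++\rho_-}(\N_+-\N_-)$ gives directly
\begin{equation*}
\N_+\N^{-1}-\frac{\rho_+\rho_-}{\rho_++\rho_-}=\frac{\rho_+}{\rho_++\rho_-}\,(\N_+-\N_-)\,\N^{-1}\,,
\end{equation*}
and the claimed bound $C\rho_-$ then follows from \eqref{estimateN-1}. Neither slip changes the estimates, but both need to be fixed for the argument to stand on its own.
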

\pr
The proof of \eqref{estimateN0} 
and more detailed analysis of the Dirichlet-Neumann operator can be found in \cite[A.2]{shatah1}.
Estimate \eqref{estimateN-1} is easily obtained as follows.
From the definition of $\mathcal{N}$ in \eqref{N} we have
\begin{equation*}
\mathcal{N} = \frac{\mathcal{N}_-}{\rho_-} 
\left(  \rho_- \mathcal{N}_-^{-1} \frac{\mathcal{N}_+}{\rho_+}  + I  \right) =: \frac{\mathcal{N}_-}{\rho_-} (B + I) \, .
\end{equation*}
Estimate \eqref{estimateN0} implies that for $\rho_- \leq \rho_+/(2 C^2)$, $B$
maps $H^s (S)$ to itself with norm less or equal than $C^2 \rho_- \rho_+^{-1} \leq \frac{1}{2}$.
Hence, $I + B$ is invertible and
$
\mathcal{N}^{-1} = \rho_- \sum_{j=0}^\infty {(-1)}^j B^j  \mathcal{N}_-^{-1}
$
% = \rho_- \mathcal{N}_-^{-1} - \frac{\rho_-^2}{\rho_+}  \mathcal{N}_+  \mathcal{N}_-^{-2} + O(\rho_-^3) \, .
so that
\begin{equation*}
{| \mathcal{N}^{-1} |}_{L (  \dot{H}^{s - \frac{1}{2}} (S) , \dot{H}^{s+\frac{1}{2}} (S))} 
			\leq  \rho_- C \sum_{j=0}^\infty 
			{| B |}^j_{L ( \dot{H}^{s + \frac{1}{2}} (S) , \dot{H}^{s+\frac{1}{2}} (S))} 
			\leq 2 C \rho_-   \, .
\end{equation*}
Inequalities \eqref{N_+N^-1} and \eqref{N_+barN} are a consequence of Theorem A.8 in \cite{shatah1}
where it is proved that
\begin{equation}
\label{N-Delta_S}
{ \left| \mathcal{N}_\pm  -  {(- \Delta_S )}^{\frac{1}{2}}
					\right| }_{L ( H^{s + \frac{1}{2}} (S), H^{s + \frac{1}{2}} (S) )} \leq C  
																		\hskip8pt , \hskip8pt \for  s \in [ - 3k , 3k - 2]
\end{equation}
for some $C$ uniform  in $\Lambda_0$.
To see this let us denote 
\begin{equation*}
L := {(- \Delta_S )}^{\frac{1}{2}} \N^{-1} \hskip10pt \mbox{and} 
		\hskip8pt \rho_0 := \rho_+ \rho_- / (\rho_+ + \rho_-) \ll 1 \, ;
\end{equation*}
we write 
\begin{equation*}
\mathcal{N}_\pm \N^{-1}  -  \frac{\rho_+ \rho_-}{\rho_+  + \rho_-} = 
( \mathcal{N}_\pm  -  {(- \Delta_S )}^{\frac{1}{2}} ) \N^{-1} + L - \rho_0 \, .
\end{equation*}
The first summand above satisfies the desired bound in view of \eqref{estimateN-1} and \eqref{N-Delta_S}. 
For the second summand notice that
\begin{equation*}
\rho_0 L^{-1} - I = \rho_0 \left( \frac{\N_+}{\rho_+} -  \frac{ {(- \Delta_S)}^\frac{1}{2} }{\rho_+} + 
		\frac{\N_-}{\rho_-} -  \frac{ {(- \Delta_S)}^\frac{1}{2} }{\rho_-} \right)
		{(- \Delta_S)}^{-\frac{1}{2}}
\end{equation*}
so that again by \eqref{N-Delta_S} we have
${|\rho_0 L^{-1} - I |}_{L ( \dot{H}^{s - \frac{1}{2}}, \dot{H}^{s+ \frac{1}{2}} )} \leq C $.
Therefore
\begin{equation*}
{|L - \rho_0 |}_{L ( \dot{H}^{s - \frac{1}{2}}, \dot{H}^{s+ \frac{1}{2}} )} \leq C \rho_- \, ,
\end{equation*}
and this proves \eqref{N_+N^-1}.
Finally, from the definition of $\bar{\N}$ in \eqref{barN} we have
\begin{equation*}
(\rho_+ + \rho_-) \bar{\N} - \N_+  =  \N_+ \left(  \frac{\rho_+ + \rho_-}{\rho_+ \rho_-} \N^{-1}  \N_-  -  I  \right)
\end{equation*}
so that \eqref{N_+barN} follows by \eqref{N_+N^-1} $_\Box$

%%%%%%%%%%%%%%%%%%%%%%%
%%%%%%%%%%%%%%%%%%%%%%%
%
% COMMUTATOR ESTIMATES
%
%%%%%%%%%%%%%%%%%%%%%%%
%%%%%%%%%%%%%%%%%%%%%%%

In the non-linear approach to energy estimates performed in Eulerian coordinates, a key role
is played by commutators between the material derivative 
and the various differential operators appearing in the problem.

\begin{lem}[{\bf Commutator Estimates}]
\label{lemcomm}
Let $\Delta_{S_t}$, $\Delta^{-1}_\pm$ and $\H_\pm$ denote respectively the surface Laplacian on $S_t$, 
the inverse Laplacian with Dirichlet boundary conditions and the harmonic extension in the domain $\Omega_t^\pm$.
The following list of commutator estimates holds true:
\begin{align}
\label{D_tgrad}
{ \left| [ \mathbf{D}_{t_\pm}, \nabla ] \right| }_{L (H^{s} (\Omega_t^\pm), H^{s-1} (\Omega_t^\pm) ) }  
							& \leq C {|v|}_{H^{3k} (\Omega_t^\pm)}
							\hskip10pt \for 1 \leq s \leq 3k
\\
\label{D_tH}
{ \left| [ \mathbf{D}_{t_\pm}, \H_\pm ]  \right| }_{L (H^{s - \frac{1}{2}} (S_t), H^{s} (S_t) ) } 
							& \leq C {|v|}_{H^{3k} (\Omega_t^\pm)}
							\hskip10pt \for 1/2 < s \leq 3k
\\
\label{D_tDelta-1}
{ \left| \left[ \mathbf{D}_{t_\pm}, \Delta^{-1}_\pm \right] \right| }_{ 
                     L(H^{s - 2} (\Omega_t^\pm), H^{s} (\Omega_t^\pm))}
                     & \leq C {|v|}_{H^{3k} (\Omega_t^\pm)}
                     \hskip10pt \for 2 - 3 k \leq s \leq 3 k
\\
\label{commest1}
{ \left| \left[ \mathbf{D}_{t_\pm}, \mathcal{N}_\pm \right] \right| }_{
							L(H^{s} (S_t), H^{s-1} (S_t))}
							& \leq C {|v|}_{H^{3k} (\Omega_t^\pm)}
							\hskip10pt \for 1 \leq s \leq 3k - 1/2
\\
\label{commest2}
{ \left| \left[ \mathbf{D}_{t_\pm}, \Delta_{S_t} \right] \right| }_{L (H^{s} (S_t), H^{s-2} (S_t) )}
							& \leq C {|v|}_{H^{3k} (\Omega_t^\pm)}
							\hskip10pt \for 7/2 - (3/2) k < s \leq 3k - 1/2
\end{align}
with $C$ uniform for any $S_t \in \Lambda_0$.
In particular
\begin{eqnarray}
\label{D_tbarN}
{ \left| \left[ \mathbf{D}_{t_+}, \bar{\N} \right] \right| }_{
																								L(H^{s} (S_t), H^{s-1} (S_t))}
																								\leq C {|v|}_{H^{3k} (\Omega_t^+)}
																								\hskip10pt \for 1 \leq s \leq 3k - 1/2  \,  .
\end{eqnarray}
\end{lem}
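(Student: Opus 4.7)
The plan is to reduce every commutator in the list to the most elementary one, namely $[\mathbf{D}_{t_\pm},\nabla]$, via elliptic inversion. First I would compute that elementary commutator directly: since $\mathbf{D}_{t_\pm}=\partial_t+v_\pm\cdot\nabla$, for any sufficiently smooth scalar $f$ one has the pointwise identity
\begin{equation*}
[\mathbf{D}_{t_\pm},\nabla]f \;=\; -(Dv_\pm)^\ast\,\nabla f,
\end{equation*}
and the bound \eqref{D_tgrad} follows at once from the Sobolev product estimate quoted in the paper, using $3k>\frac{n}{2}+2$ to control $Dv_\pm$ in $L^\infty$ and $v_\pm$ in $H^{3k}$.

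Next I would derive \eqref{D_tH}, \eqref{D_tDelta-1} and \eqref{commest1} by the same trick: write $g=[\mathbf{D}_{t_\pm},\H_\pm]f$, apply the Laplacian, and use $\Delta \H_\pm f=0$ together with the commutator identity
\begin{equation*}
[\Delta,\mathbf{D}_{t_\pm}]\,u \;=\; -2\,Dv_\pm\!:\!D^2u - (\Delta v_\pm)\cdot \nabla u
\end{equation*}
to obtain $\Delta g=[\Delta,\mathbf{D}_{t_\pm}]\H_\pm f$, a second-order expression in $\H_\pm f$ multiplied by first/second derivatives of $v_\pm$. The key boundary observation is that $\mathbf{D}_{t_\pm}$ commutes with restriction to $S_t$, because $S_t$ is transported by $v_\pm$: hence $g|_{S_t}=0$, and the standard Dirichlet estimate \eqref{estimateD-1} applied to $g$ yields \eqref{D_tH}. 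The same scheme, now with a nonzero Dirichlet datum or a source term, gives \eqref{D_tDelta-1}. For \eqref{commest1} I write $\mathcal{N}_\pm f=\nabla_{N_\pm}(\H_\pm f)|_{S_t}$ and distribute $\mathbf{D}_{t_\pm}$ across the three pieces (normal field, harmonic extension, trace), invoking \eqref{D_tgrad}, \eqref{D_tH}, \eqref{estimateH}, \eqref{estimateN0}, and the evolution law $\mathbf{D}_{t_\pm}N_\pm=-((Dv_\pm)^\ast N_\pm)^\top$.

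For the surface commutator \eqref{commest2} I would use the geometric formula for the variation of $\Delta_{S_t}$ along the flow of $v_\pm$, expressing
\begin{equation*}
[\mathbf{D}_{t_\pm},\Delta_{S_t}]\phi
\;=\; -\,2\,\mathcal{D}^2\phi\!:\!\mathcal{D}v_\pm^\top + (\text{lower order in }\phi)\cdot(\text{derivatives of }v_\pm,N_\pm,\Pi_\pm),
\end{equation*}
which is a second-order operator in $\phi$ with coefficients controlled by $v_\pm$ in $H^{3k}$ and by $S_t\in\Lambda_0$; the loss of two derivatives in $\phi$ is precisely what appears in the statement, and the lower bound $s>7/2-(3/2)k$ is what one needs for the product estimates in negative-index Sobolev spaces to close. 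Finally \eqref{D_tbarN} follows by chaining: from definition \eqref{barN},
\begin{equation*}
[\mathbf{D}_{t_+},\bar{\mathcal{N}}]
\;=\; \tfrac{1}{\rho_+\rho_-}\!\Bigl([\mathbf{D}_{t_+},\mathcal{N}_+]\mathcal{N}^{-1}\mathcal{N}_- + \mathcal{N}_+[\mathbf{D}_{t_+},\mathcal{N}^{-1}]\mathcal{N}_- + \mathcal{N}_+\mathcal{N}^{-1}[\mathbf{D}_{t_+},\mathcal{N}_-]\Bigr),
\end{equation*}
with the middle commutator rewritten as $-\mathcal{N}^{-1}[\mathbf{D}_{t_+},\mathcal{N}]\mathcal{N}^{-1}$ and handled by \eqref{commest1} applied to $\mathcal{N}_\pm$ and by \eqref{estimateN-1}; the density factors from $\mathcal{N}^{-1}$ cancel against the $(\rho_+\rho_-)^{-1}$ prefactor.

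The main obstacle is \eqref{commest2}: one must make the geometric computation of $\mathbf{D}_{t_\pm}\Delta_{S_t}$ fully rigorous under the weak regularity $S_t\in\Lambda_0$, where coefficients of the surface metric and its inverse are only $H^{s-2}$-type, and one has to identify in advance the sharp trilinear Sobolev product structure allowing the range $s>\tfrac{7}{2}-\tfrac{3}{2}k$. Once that is in hand, all other commutators follow from \eqref{D_tgrad} by elliptic inversion on the moving domain, with uniformity in $\Lambda_0$ supplied by Lemmas \ref{lemmaoperator1} and \ref{lemmaoperator2}.
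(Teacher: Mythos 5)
Your proposal is correct and is essentially the standard argument: the paper itself gives no proof of this lemma, stating it and deferring to \cite[sec. 3.1]{shatah1}, where the estimates are obtained exactly as you describe, from the explicit identity $[\mathbf{D}_{t_\pm},\nabla]f=-(Dv_\pm)^\ast\nabla f$, elliptic inversion on the moving domain with the observation that $[\mathbf{D}_{t_\pm},\H_\pm]f$ vanishes on $S_t$, and the evolution formulae for $N_\pm$ and the induced metric. The one step you should make explicit is that your decomposition of $[\mathbf{D}_{t_+},\bar{\N}]$ requires the mixed commutator $[\mathbf{D}_{t_+},\mathcal{N}_-]$, which is not literally an instance of \eqref{commest1} as stated; it follows because $v_+^\bot+v_-^\bot=0$ makes $\mathbf{D}_{t_+}-\mathbf{D}_{t_-}$ act on functions of $S_t$ as the tangential derivative $\nabla_{v_+^\top-v_-^\top}$, and $[\nabla_{w^\top},\mathcal{N}_-]$ is a first-order operator by the same harmonic-extension argument.
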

\nl
Explicit formulae and estimates of the above commutators can be found in \cite[sec. 3.1]{shatah1}.

\begin{lem}[{\bf More commutators}]
\label{lemmacommp^star}
Let $p^\star_{v,v}$ and $\bar{\N}$ be defined respectively in \eqref{pwaterwave} and \eqref{barN}.
Then
\begin{align}
\label{commp^starN}
{ \left| \left[ \nabla_{N_+} p^\star_{v,v} , \bar{\N} \right]  \right| }_{
													L (H^{s} (S_t), H^{s-\frac{1}{2}} (S_t) )}
																								& \leq Q
																								\hskip10pt \for 1 \leq s \leq 3k - 3  \, 
\\
\label{commp^starDelta}
{ \left| \left[ \nabla_{N_+} p^\star_{v,v} , \Delta_{S_t} \right] \right| }_{
													L (H^{s} (S_t), H^{s-\frac{3}{2}} (S_t) )}
																								& \leq Q
																								\hskip10pt \for 3 \leq s \leq 3k - 3   \, .
\end{align}
\end{lem}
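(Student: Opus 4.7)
The plan is to reduce both commutators to (a) a strong control of the multiplier $g := \nabla_{N_+}p^\star_{v,v}$ on $S_t$ and (b) standard product and Kato-Ponce-type estimates, with the nonlocal operator $\bar{\N}$ replaced by its principal symbol ${(-\Delta_{S_t})}^{1/2}$ via the appendix estimates. The first step is to deduce from \eqref{estimatep^star_vv} and the trace estimate \eqref{estimaterestriction} that ${|\nabla p^\star_{v,v}|}_{H^{3k-1}(S_t)}\leq Q$; combining this with the bound ${|N_+|}_{H^{3k-1}(S_t)}\leq Q$ listed after \eqref{Q} and the product-Sobolev inequality on $S_t$ then yields ${|g|}_{H^{3k-1}(S_t)}\leq Q$. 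This provides the crucial two-derivative margin above the target index $s \leq 3k-3$, into which all commutator losses will be absorbed.

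For \eqref{commp^starDelta} I would expand pointwise on $S_t$:
\[
[g,\Delta_{S_t}]f = -\,2\,\nabla^\top g\cdot\nabla^\top f - (\Delta_{S_t}g)\,f,
\]
and estimate each summand in $H^{s-\frac{3}{2}}(S_t)$ using the product-Sobolev inequalities already employed in lemma \ref{lemp}. The bounds ${|\nabla^\top g|}_{H^{3k-2}(S_t)}\leq Q$ and ${|\Delta_{S_t}g|}_{H^{3k-3}(S_t)}\leq Q$ close the estimate, the assumption $s\geq 3$ being exactly what is needed to place the term $(\Delta_{S_t}g)\,f$ in $H^{s-\frac{3}{2}}$ with $f$ sitting in the Sobolev algebra $H^s$ (given the standing assumption $3k>n/2+2$).

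For \eqref{commp^starN} I would first replace $\bar{\N}$ by its principal symbol. Combining \eqref{N_+barN} with \eqref{N-Delta_S}, one writes $(\rho_++\rho_-)\bar{\N} = {(-\Delta_{S_t})}^{1/2} + R$, where the remainder $R$ is uniformly bounded on $H^\sigma(S_t)$ for every $\sigma\in[\tfrac{1}{2},\,3k-\tfrac{3}{2}]$, so that
\[
(\rho_+ + \rho_-)[g,\bar{\N}] = \bigl[g,{(-\Delta_{S_t})}^{1/2}\bigr] + [g,R].
\]
The first summand is a classical fractional commutator, bounded in $L(H^s(S_t),H^{s-\frac{1}{2}}(S_t))$ by a Kato-Ponce-type estimate whose constant is controlled by ${|g|}_{H^{s+\frac{1}{2}}(S_t)}+{|\nabla^\top g|}_{L^\infty(S_t)}\leq Q$. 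The second summand is handled directly from the mapping properties of $R$: writing $[g,R]f = R(gf) - g(Rf)$ and applying product-Sobolev estimates with $g\in H^{3k-1}$ and the boundedness of $R$ on $H^{s-\frac{1}{2}}$ gives the desired bound.

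The main obstacle is preserving the half-derivative gain stated in \eqref{commp^starN}. One has to resist the temptation to treat $\bar{\N}$ as a black-box order-one pseudodifferential operator; instead one must peel off its principal symbol through \eqref{N_+barN}--\eqref{N-Delta_S} so that the remainder is genuinely order zero on the relevant Sobolev scale. After that decomposition, the remaining argument is index bookkeeping, and the two-derivative gap between the regularity ${|g|}_{H^{3k-1}}\leq Q$ and the top of the allowed range $s\leq 3k-3$ leaves plenty of room.
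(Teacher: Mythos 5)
Your treatment of \eqref{commp^starDelta} is essentially sound: the pointwise identity $[g,\Delta_{S_t}]f = -2\nabla^\top g\cdot\nabla^\top f - (\Delta_{S_t}g)f$ reduces everything to product-Sobolev estimates with the two-derivative margin ${|g|}_{H^{3k-1}(S_t)}\leq Q$ that you correctly extract from \eqref{estimatep^star_vv} and the trace inequality. (The paper handles the low-$s$ / high-$n$ endpoint by splitting into the cases $s<(n-1)/2$ and $s\geq(n-1)/2$ and choosing which factor absorbs the index; you gloss over this, but it is routine bookkeeping.)

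The proof of \eqref{commp^starN} has a genuine gap. You reduce $\bar{\N}$ to ${(-\Delta_{S_t})}^{1/2}$ via \eqref{N_+barN}--\eqref{N-Delta_S} and then invoke a ``classical Kato--Ponce'' commutator estimate for $\bigl[g,{(-\Delta_{S_t})}^{1/2}\bigr]$. That estimate is classical on $\R^{n-1}$ with the flat Laplacian; on the curved, time-dependent hypersurface $S_t$ it is \emph{not} one of the appendix lemmas, and establishing it --- with constants uniform over the class $\Lambda_0$ --- would itself require a localization argument through the coordinate charts of \cite[App.\ A]{shatah1}, together with control of all the metric-dependent error terms. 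In effect you have traded the unproved commutator $[g,\bar{\N}]$ for the unproved commutator $\bigl[g,{(-\Delta_{S_t})}^{1/2}\bigr]$, which is not a reduction.

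The paper avoids this entirely. It uses \eqref{N_+barN} to reduce $\bar{\N}$ to $\N_+$ (not to ${(-\Delta_{S_t})}^{1/2}$), and then exploits the \emph{exact} Leibniz-type identity for the Dirichlet--Neumann operator,
\begin{equation*}
\N_+(fg) = g\,\N_+ f + f\,\N_+ g - 2\,\nabla_{N_+}\Delta^{-1}\bigl(\nabla\H_+ f\cdot\nabla\H_+ g\bigr),
\end{equation*}
which follows directly from $\H_+(fg)=\H_+f\cdot\H_+g - \Delta^{-1}\bigl(2\nabla\H_+f\cdot\nabla\H_+g\bigr)$. This makes the commutator
\begin{equation*}
[a,\N_+]f = -f\,\N_+ a + 2\,\nabla_{N_+}\Delta^{-1}\bigl(\nabla\H_+ a\cdot\nabla\H_+ f\bigr)
\end{equation*}
\emph{manifestly} lower order: the first summand is multiplication by the function $\N_+ a$, and the second is the Dirichlet--Neumann trace of a $\Delta^{-1}$--smoothed product. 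Both are then estimated by the product-Sobolev inequalities (again split into $s<(n-1)/2$ and $s\geq(n-1)/2$), with \eqref{estimatep^star_vv} and the appendix elliptic bounds. If you want to salvage your route, the honest fix is to replace the Kato--Ponce black box by this explicit identity for $\N_+$; as written, your argument is incomplete.
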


\pr
First notice that in order to prove \eqref{commp^starN} it is enough to show the bound just for $\N_+$.
It is also easy to see that $\N_+$ satisfies Leibniz' rule up to lower order terms:
\begin{equation*}
\N_+ (fg) =  g \N_+ f  + f \N_+ g - 2\nabla_{N_+} \Delta^{-1} ( \nabla \H_+ f \cdot \nabla \H_+ g) \, .
\end{equation*}
Let $a := \nabla_{N_+} p^\star_{v,v} $, then for $s < (n-1)/2$ we have\footnote{
%If $s < (n-1)/2$ (resp. $s < n/2$) we 
Use the inequality
\begin{equation*}
{|f g|}_{ H^{s_1 + s_2 - \frac{n-1}{2}} (S_t) } \leq {|f|}_{ H^{s_1} (S_t) } {|g|}_{ H^{s_2} (S_t) }
\hskip10pt ( \mbox{resp.}  \hskip6pt 
{|f g|}_{ H^{s_1 + s_2 - \frac{n}{2}} (\Omega_t) } \leq {|f|}_{ H^{s_1} (\Omega_t) } {|g|}_{ H^{s_2} (\Omega_t) }  )
\end{equation*}
with $ g = \N_+ a$, $s_1 = s$ and $s_2 = n/2 - 1$
(resp. $ g = \nabla \H_+ a$, $s_1 = s - \frac{1}{2}$ and $s_2 = (n - 1)/2$ ).
}
\begin{align*}
{ \left| [ a ,\N_+] f  \right| }_{H^{s - \frac{1}{2}} (S_t) }  
							  & \leq
							  { \left| \N_+ a f \right| }_{H^{s - \frac{1}{2}} (S_t) }    
								+  2  { \left| \nabla_{N_+} 
								\Delta^{-1} ( \nabla \H_+ a \cdot \nabla \H_+ f) \right| }_{H^{s - \frac{1}{2}} (S_t) }
								%\\
								\\
								& \leq
								C { \left| \N_+ a \right| }_{H^{\frac{n}{2} - 1} (S_t) } { \left| f \right| }_{H^{s} (S_t) }  
								+  C { \left| \nabla \H_+ a \cdot \nabla \H_+ f \right| }_{H^{s - 1} (\Omega_t^+) }
								%\\
								\\
								& \leq
								C { \left| a \right| }_{H^{\frac{n}{2}} (S_t) } { \left| f \right| }_{H^{s} (S_t) }
								\leq Q  { \left| f \right| }_{H^{s} (S_t) }
\end{align*}
having used \eqref{estimatep^star_vv} and $3k - 1 > n/2$ in the last inequality. 
If instead $s \geq (n-1)/2$ then
\begin{align*}
{ \left| [ a ,\N_+] f  \right| }_{H^{s - \frac{1}{2}} (S_t) }  
							  & \leq  C 
							  { \left| f \right| }_{H^{s - \frac{1}{2}} (S_t) }    
							  { \left| \N_+ a \right| }_{H^{s + \frac{1}{2}} (S_t) } 
								+  
								{ \left| f \right| }_{H^{s - \frac{1}{2}} (S_t) }    
								{ \left| \nabla \H_+ a  \right| }_{H^{s + 1} (\Omega_t^+) }
								%\\
								\\
								& \leq
								C { | a | }_{H^{s + \frac{3}{2}} (S_t) }  { | f | }_{ H^{s - \frac{1}{2}} (S_t) }
								\leq Q  { | f | }_{ H^{s - \frac{1}{2}} (S_t) }   \, .
\end{align*}
Similar arguments also prove \eqref{commp^starDelta} $_\Box$

\begin{lem}[{\bf Geometric Formulae}]
Let $N , \k$ and $\Pi$ denote respectively the outward unit normal, the mean-curvature
and the second fundamental form of an hypersurface $S$.
Then there exists a uniform constant $C$ such that for any $S \in \Lambda_0$
\begin{equation}
\label{stimaNPI}
{|\Pi|}_{H^s (S)} + {|N|}_{H^{s+1} (S)} \leq C (1 + {|\kappa|}_{H^s(S)}) % \leq C (1 + E)
																			\hskip10pt \for 3k - 5/2 \leq s \leq 3k - 1 \, .
\end{equation}
If we assume that the hypersurface $S_t$
evolves in time with velocity given by the normal component of a vector field $v$,
and let $\D$ denote the covariant derivative on $S_t$ and $\tau$ be any tangent vector,
then the following identities hold true:
\begin{align}
\label{D_tN}
\mathbf{D}_t N  & = - { \left[ (Dv)^\ast \cdot N \right]}^\top
\\
\label{D_tkappa}
\mathbf{D}_t \kappa  & =  - \Delta_{S_t} v \cdot N - 2 \Pi \cdot \left(( \left. D^\top \right|_{T\partial\Omega_t}) v \right)
\\
\nonumber
& =  - \Delta_{S_t} v^\bot - v^\bot {|\Pi|}^2 + \nabla_{v^\top} \kappa
\\
\label{D_tdS}
\mD_t dS  & = ( \mathcal{D} \cdot v^\top  + \k v^\top ) dS
\\
\label{D_tPI}
\mathbf{D}_t^\top \Pi (\tau) & =  - \D_\tau \left(  {( (Dv)^\ast N_+ )} ^\top \right)  
							- \Pi \left( {( \nabla_\tau v )}^\top \right)
\\
\label{formulaPIk}
- \Delta_{S_t} \Pi & = - \D^2 \k  +  ( {|\Pi|}^2 I - \k \Pi ) \Pi  \,  .
\end{align}
\end{lem}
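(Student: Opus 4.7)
I would split the lemma into its two components: the elliptic estimate \eqref{stimaNPI} for $\Pi$ and $N$ in terms of $\k$, and the list of evolution/Codazzi-type identities \eqref{D_tN}--\eqref{formulaPIk}.

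For \eqref{stimaNPI}, the plan is to work in the standard uniform graph parametrization over $\Lambda_0$ constructed in \cite[Appendix A]{shatah1}: locally $S$ is the graph of a function $h$ with $|h|_{H^{3k-1/2}}$ and $|\nabla h|_{L^\infty}$ controlled by $\delta$, so that the coefficients $(1+|\nabla h|^2)^{-1/2}$ and products involving $\nabla h$ are smooth bounded multipliers uniformly in $\Lambda_0$. In these coordinates $N$ depends on one derivative of $h$, $\Pi$ on two, and the mean curvature equation $\k = \div(\nabla h/\sqrt{1+|\nabla h|^2})$ is a quasilinear elliptic equation for $h$. Standard Schauder/elliptic $H^s$-regularity, combined with Sobolev product estimates to handle the nonlinear coefficients in the stated range $3k-5/2\le s\le 3k-1$, gives $|h|_{H^{s+2}}\le C(1+|\k|_{H^s})$ uniformly, from which the bounds on $|N|_{H^{s+1}}$ and $|\Pi|_{H^s}$ follow by direct substitution and product estimates. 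The main subtlety is that constants must not depend on the particular $\tilde S\in\Lambda_0$; this is ensured by the definition of $\Lambda_0$ (bounded $H^{3k-1/2}$-graph distance from $S_0$ and $|\k|_{H^{3k-5/2}}<L$), which pins down uniform ellipticity and multiplier bounds.

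For the differential-geometric identities I would argue pointwise and let $X(t)$ denote a tangent vector field Lie-transported by the flow, so that $\mathbf{D}_t X = \nabla_X v$. Formula \eqref{D_tN} follows by differentiating the two constraints $|N|^2=1$ and $\langle N,X\rangle=0$: the first gives $N\cdot \mathbf{D}_t N=0$, while the second gives $\langle \mathbf{D}_t N,X\rangle=-\langle N,\nabla_X v\rangle=-\langle (Dv)^\ast N,X\rangle$, so $\mathbf{D}_t N$ equals the tangential projection $-[(Dv)^\ast N]^\top$. For \eqref{D_tdS} one writes $dS=J\,dS_0$ with $J$ the tangential Jacobian of the Lagrangian map and computes $\mathbf{D}_t J/J=\div_{S_t}v=\D\cdot v^\top+\k v^\bot$, using the surface divergence decomposition already invoked in the proof of the energy identity. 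For \eqref{D_tkappa} I would start from $\k=-\D\cdot N$, apply $\mathbf{D}_t$ and commute with $\D$ using \eqref{D_tN}; alternatively and more efficiently, use the classical first variation formula for the mean curvature under a perturbation with velocity $w$, namely $\delta\k=-\Delta_S w^\bot-|\Pi|^2 w^\bot+\nabla_{w^\top}\k$, applied to $w=v$, which directly produces the second line of \eqref{D_tkappa}; the first line then follows from the identity $\Delta_{S_t}(v\cdot N)=\Delta_{S_t}v\cdot N+2\Pi\cdot(D^\top v)+\ldots$ combined with \eqref{D_tN}. For \eqref{D_tPI} I would differentiate $\Pi(\tau_1,\tau_2)=\langle \D_{\tau_1}\tau_2,N\rangle$ (or equivalently the Weingarten map) along the flow, again using $\mathbf{D}_t N$ and the relation $\mathbf{D}_t(D_\tau X)-D_\tau(\mathbf{D}_t X)=D_{[\tau,v]}X$ for tangentially Lie-transported frames.

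The identity \eqref{formulaPIk} is the Simons-type relation for a hypersurface in flat $\R^n$. The plan is: since the ambient space is Euclidean, Codazzi--Mainardi gives $\D_i \Pi_{jk}=\D_j \Pi_{ik}$, so contracting in $i,k$ yields $\D\cdot \Pi=\D\k$ (componentwise). Applying $\D$ once more and commuting covariant derivatives with the Ricci identity produces a curvature term; the Gauss equation expresses the intrinsic curvature of $S_t$ in terms of $\Pi\wedge\Pi$, and a direct bookkeeping gives $\Delta_{S_t}\Pi=\D^2\k+\k\Pi^2-|\Pi|^2\Pi$, i.e.\ \eqref{formulaPIk}.

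The main obstacle I anticipate is the uniformity of \eqref{stimaNPI} over all $\tilde S\in\Lambda_0$: one must show that the graph parametrizations, their transitions, and the associated elliptic constants are controlled solely in terms of the parameters $(s,\delta,L)$ defining $\Lambda_0$, not on any specific $\tilde S$. Granted the coordinate framework of \cite[Appendix A]{shatah1}, the remaining identities are algebraic consequences of the geometric definitions and standard commutator calculus on $S_t$, and hence routine.
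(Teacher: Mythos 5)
Your outline is correct and follows essentially the same route as the paper, which itself gives no proof but only cites \cite{shatah1} (lemma 4.7 for \eqref{stimaNPI}, section 3.1 for \eqref{D_tN}, \eqref{D_tkappa}, \eqref{D_tPI}, and proposition A.2 for \eqref{formulaPIk}); those arguments proceed exactly as you describe, via uniform graph coordinates and elliptic bootstrap for the curvature estimate, Lie-transport/first-variation computations for the evolution identities, and Codazzi plus the Gauss equation for the Simons-type identity. (Your writing of the surface divergence as $\mathcal{D}\cdot v^\top + \k v^\bot$ is the correct reading; the $v^\top$ in the second term of \eqref{D_tdS} as displayed is a typo.)
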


\nl
The proof of the above lemma can be found in \cite{shatah1};
more specifically, identities \eqref{D_tN}, \eqref{D_tkappa} and \eqref{D_tPI} are derived in sec. 3.1,
\eqref{stimaNPI} is proved in lemma 4.7, 
and \eqref{formulaPIk} is part of the proof of proposition A.2. %in appendix A.1.

%\addcontentsline{toc}{section}{Bibliography}

\end{document}